\newtheorem{proposition}{Proposition}[section]
\newtheorem{theorem}[proposition]{Theorem}
\newtheorem{lemma}[proposition]{Lemma}
\newtheorem{corollary}[proposition]{Corollary}
\theoremstyle{remark}
\newtheorem{definition}[proposition]{Definition}
\newtheorem{remark}[proposition]{Remark}
\newtheorem{example}[proposition]{Example}
\newcommand{\cst}{\ifmmode\mathrm{C}^*\else{$\mathrm{C}^*$}\fi}
\newcommand{\wst}{\ifmmode\mathrm{W}^*\else{$\mathrm{W}^*$}\fi}
\newcommand{\CC}{\mathbb{C}}
\newcommand{\tens}{\otimes}
\newcommand{\ot}{\otimes}
\newcommand{\wot}{\overline{\otimes}}
\newcommand{\id}{\textup{id}}
\newcommand{\Dom}{\textup{Dom}}
\newcommand{\Lin}{\textup{Lin}\,}
\newcommand{\comp}{\!\circ\!}
\newcommand{\I}{\mathds{1}}
\newcommand{\hh}[1]{\widehat{#1}}
\newcommand{\GG}{\mathbb{G}}
\newcommand{\RR}{\mathbb{R}}
\newcommand{\KK}{\mathbb{K}}
\newcommand{\HH}{\mathbb{H}}
\newcommand{\Jnd}{\mathcal{I}}
\newcommand{\vtens}{\,\bar{\tens}\,}
\newcommand{\uu}{{\scriptscriptstyle\mathrm{u}}}
\newcommand{\Hil}{\sH}
\newcommand{\sA}{\mathsf{A}}
\newcommand{\sB}{\mathsf{B}}
\newcommand{\sC}{\mathsf{C}}
\newcommand{\sM}{\mathsf{M}}
\newcommand{\sN}{\mathsf{N}}
\newcommand{\sH}{\mathsf{H}}
\newcommand{\sK}{\mathsf{K}}
\newcommand{\sL}{\mathsf{L}}
\newcommand{\sZ}{\mathsf{Z}}
\newcommand{\sV}{\mathsf{V}}
\newcommand{\ww}{\mathrm{W}}
\newcommand{\WW}{{\mathds{V}\!\!\text{\reflectbox{$\mathds{V}$}}}}
\newcommand{\Ww}{\mathds{W}}
\newcommand{\wW}{\text{\reflectbox{$\Ww$}}\:\!}
\newcommand{\hGG}{\hh{\GG}}
\newcommand{\hHH}{\hh{\HH}}
\newcommand{\hKK}{\hh{\KK}}
\DeclareMathOperator{\C}{C}
\DeclareMathOperator{\D}{D}
\DeclareMathOperator{\M}{M}
\DeclareMathOperator{\Mor}{Mor}
\DeclareMathOperator{\B}{B}
\DeclareMathOperator{\K}{\mathcal{K}}
\DeclareMathOperator{\cK}{\mathcal{K}}
\DeclareMathOperator{\Linf}{L^\infty\!\!\;}
\DeclareMathOperator{\Lone}{L^1\!\!\;}
\DeclareMathOperator{\linf}{\ell^\infty\!\!\;}
\DeclareMathOperator{\Ltwo}{L^2\!\!\;}
\numberwithin{equation}{section}
\newenvironment{rlist}
{

\begin{enumerate}}
{\end{enumerate}}
\begin{document}

\author{Mehrdad Kalantar}
\address{Department of Mathematics, University of Houston, Houston, TX 77204, USA}
\email{kalantar@math.uh.edu}

\author{Pawe{\l} Kasprzak}
\address{Department of Mathematical Methods in Physics, Faculty of Physics, University of Warsaw, Poland}
\email{pawel.kasprzak@fuw.edu.pl}

\author{Adam Skalski}
\address{Institute of Mathematics of the Polish Academy of Sciences, ul. Sniadeckich 8, 00-656 Warszawa, Poland}
\email{a.skalski@impan.pl}

\begin{abstract}
The notion of an open quantum subgroup of a locally compact quantum group is introduced and given several equivalent characterizations in terms of group-like projections, inclusions of quantum group $\C^*$-algebras and properties of respective quantum homogenous spaces. Open quantum subgroups are shown to be closed in the sense of Vaes and normal open quantum subgroups are proved to be in 1-1 correspondence with normal compact quantum subgroups of the dual quantum group.
\end{abstract}

\title{Open quantum subgroups of locally compact quantum groups}

\subjclass[2010]{Primary: 46L89 Secondary: 22D25}

\keywords{Locally compact quantum group, open quantum subgroup, quantum homogeneous space}

\maketitle

The theory of locally compact quantum groups, formulated in the language of operator algebras, is a rapidly developing field closely related to abstract harmonic analysis, and with various connections to noncommutative geometry, quantum probability and other areas of `noncommutative' mathematics. A \emph{locally compact quantum group} $\GG$ is a virtual object, studied via its `algebras of functions': a $\C^*$-algebra $\C_0(\GG)$, playing the role of the algebra of continuous functions on $\GG$ vanishing at infinity, and a von Neumann algebra $L^{\infty}(\GG)$, viewed as the algebra of essentially bounded measurable functions on $\GG$; both of these are equipped with coproducts, operations encoding the `multiplication operation' of $\GG$.
It is often essential to study both of the avatars of $\GG$ mentioned above (as well as the universal counterpart of $\C_0(\GG)$, the $\C^*$-algebra $\C_0^u(\GG)$) at the same time (see for example \cite{Kasprzakhomogen}, \cite{Vaes-induction}). This will also be the case in this article.

In recent years we have seen an increased interest in the notion of \emph{morphisms between locally compact quantum groups} $\HH$ and $\GG$. These also have various incarnations
(see \cite{SLW12}), one of which is given by $\C^*$-algebra morphisms $\pi \in \Mor (\C_0^u(\GG), \C_0^u(\HH))$ intertwining the respective coproducts.
It is then natural to consider what it means that a given morphism between $\HH$ and $\GG$ has closed image
and moreover is a homeomorphism onto this image -- in other words, identifies $\HH$ with a closed (quantum) subgroup of $\GG$.
This problem was studied in depth in the article \cite{DKSS}, where two alternative definitions, due respectively to Vaes and Woronowicz,
 formulated respectively in the  von Neumann algebraic and $\C^*$-algebraic language,
were compared and interpreted in several special cases.
Among the main results of \cite{DKSS} was the proof that
a closed quantum subgroup in the sense of Vaes is always a closed quantum subgroup in the sense of Woronowicz,
and in fact in many cases (classical, dual to classical, compact, discrete) the two definitions coincide.
Understanding the concept of a closed quantum subgroup forms a very natural step in the development of the theory
and was key in building and applying the induction theory for representations of quantum groups (as formulated in \cite{KustermansInduced}, \cite{Vaes-induction}).

In the current work, which can be naturally viewed as a continuation of \cite{DKSS}, we address the problem of identifying the notion of an \emph{open quantum subgroup}. Classically, an open subgroup $H$ of a locally compact group $G$ is automatically closed, a fact that leads to simplification of many questions concerning relations between representation theoretical and harmonic analytical properties of $H$ and $G$.
Perhaps, the main reason for the latter is that openness of $H$ in $G$ implies (and in fact is equivalent to) not only the compatibility of the respective topologies, but also the measure structures, given by the appropriate Haar measures.
This is in fact the starting point of our considerations, when we declare that $\HH$ is (isomorphic to) an open quantum subgroup of $\GG$ if there exists a surjective von Neumann algebra morphism $\pi: L^{\infty}(\GG) \to L^{\infty}(\HH)$ intertwining the respective coproducts. We then show that open quantum subgroups are automatically closed in the sense of Vaes, that the related topological quantum homogeneous space $\GG/ \HH$ has a simple description and that open quantum subgroups of $\GG$ can be equivalently characterised by group-like projections in $L^{\infty}(\GG)$ (the last result was noted for classical locally compact groups by Landstad and Van Daele in \cite{LandstadVD} -- see also \cite{LandstadVD2} for some related algebraic quantum group facts). Establishing these results, almost trivial in the classical setting, in the quantum world requires significant effort. We also extend the classical (and this time already there non-obvious) theorem of Bekka, Kaniuth, Lau and Schlichting \cite{BKLS} saying that a closed subgroup $H$ of $G$ is open in $G$ if and only if the natural $C^*$-morphism in  $ \Mor (\C^*(H), \C^*(G))$ maps injectively $\C^*(H)$ into $\C^*(G)$ (as opposed to its multiplier algebra). 
Finally we show that for \emph{normal} open quantum subgroups the situation is particularly satisfactory -- they lie in 1-1 correspondence with normal compact quantum subgroups of the dual quantum group.

Finally we note that one of our initial motivations for developing the concept of open quantum subgroups was to study the relations between the theory of induced representations of locally compact quantum groups as developed by Kustermans and Vaes respectively in \cite{KustermansInduced} and \cite{Vaes-induction} and the abstract \cst-theory of induced representations developed by Rieffel in \cite{rief1}. We will address this in the forthcoming work \cite{KKSinduced}.

The detailed plan of the article is as follows: in the first, preliminary section, we introduce the notations and terminology related to locally compact quantum groups and prove a few technical lemmas. Section 2 defines open quantum subgroups and shows that if $\HH$ is an open quantum subgroup of $\GG$ then we have a natural realisation of $\Linf(\HH)$ inside $\Linf(\GG)$. In Section 3 we prove that open quantum subgroups are closed in the sense of Vaes and provide various descriptions of the quotient algebra $\Linf (\GG/\HH)$. Section 4 establishes an equivalence between open quantum subgroups and group-like projections in $\Linf(\GG)$. In Section 5 open quantum subgroups are characterised via the behaviour of representations, understood as an inclusion of the corresponding group \cst-algebras. In Section 6 we return to study of the quantum homogeneous space $\GG/\HH$, showing that openness of $\HH$ is under some technical assumptions equivalent to discreteness of $\GG/\HH$ and providing an explicit description of the algebra $\C_0(\GG/\HH)$. Finally in Section 7 we prove that normal open quantum subgroups of $\GG$ are in a 1-1 correspondence with normal compact quantum subgroups of $\hGG$.

\vspace*{0.5cm}

\noindent
{\bf Acknowledgement.}\ We thank Piotr So{\l}tan for many helpful discussions and the anonymous referee for a careful reading of our manuscript.
AS  was partially supported by the NCN (National Centre of Science) grant
2014/14/E/ST1/00525. PK was partially supported by the NCN (National Centre of Science) grant
 2015/17/B/ST1/00085.

\section{Notation and preliminaries}

We will follow closely the notations of \cite{DKSS} (see also \cite{KaspSol}). All scalar products  will be linear on the right. The symbol $\ot$ will denote the tensor product of maps and minimal spatial tensor product of $\cst$-algebras, $\wot$ will denote the ultraweak tensor product of von Neumann algebras. Given two \cst-algebras $\sA$ and $\sB$, a \emph{morphism} from $\sA$ to $\sB$ is a $*$-homomorphism $\Phi$ from $\sA$ into the \emph{multiplier algebra} $\M(\sB)$ of $\sB$, which is \emph{non-degenerate}, i.e.~the set $\Phi(\sA)\sB$ of linear combinations of products of the form $\Phi(a)b$ ($a\in\sA$, $b\in\sB$) is dense in $\sB$. The set of all morphisms from $\sA$ to $\sB$ will be denoted by $\Mor(\sA,\sB)$. The non-degeneracy of morphisms ensures that each $\Phi\in\Mor(\sA,\sB)$ extends uniquely to a unital $*$-homomorphism $\M(\sA)\to\M(\sB)$ which we will usually denote by the same symbol and use  implicitly when composing the morphisms. On the multiplier \cst-algebras we will occasionally use apart from the norm topology also the \emph{strict topology}.
For a Hilbert space $\sH$ the \cst-algebra of compact operators on $\sH$ will be denoted by $\K(\sH)$. We say that a \cst-subalgebra $\sB$ of a \cst-algebra $\sA$ is \emph{non-degenerate} if the inclusion map is a non-degenerate morphism; we also assume that all representations of \cst-algebras on Hilbert spaces are non-degenerate. Further if $\sA$ and $\sC$ are \cst-algebras we say that $\sA$ is \emph{generated by $T\in\M(\sC\tens\sA)$} if for any Hilbert space $\sH$, any representation $\rho$ of $\sA$ on $\sH$ and any \cst-algebra $\sB$ represented on $\sH$ the condition that $(\id\tens\rho)(T)\in\M(\sC\tens\sB)$ implies that $\rho\in\Mor(\sA,\sB)$.

For operators acting on tensor products of spaces we will use the familiar leg notation: so for example if $V$ is a vector space and $T\in L( V^{\ot 2})$ then, depending on which legs of the triple tensor product we want to act with $T$, we have the natural operators $T_{12}, T_{13}, T_{23} \in L( V^{\ot 3})$ (the notation will be also applied in a formally more complicated case of completed tensor products). Tensor flip between algebras will be denoted by $\sigma$, and between Hilbert spaces by $\Sigma$. If $X$ is a subset of a Banach space $\sV$, by $\overline{\Lin}X$ we mean the closed linear span of $X$. If $\xi, \eta$ are vectors in a Hilbert space $\sH$, the symbol $\omega_{\xi, \eta}$ will denote the functional $T \mapsto \langle \xi, T \eta \rangle$ on $B(\sH)$, with $\omega_{\xi}:=\omega_{\xi, \xi}$.

For a one-parameter family of automorphisms $(\gamma_t)_{t \in \RR}$ of a von Neumann algebra $\sM$ (which will always be assumed to be pointwise weak$^*$-continuous) we will use the standard notations for densely defined operators $\gamma_z, z \in \CC$ (see \cite[Subsetion 4.3]{KusNotes}). For a (normal, semifinite, faithful) weight $\phi$ on a von Neumann algebra $\sM$ we denote the left ideal of `square-integrable' elements by $\mathcal{N}_{\phi} := \{x\in\sM:\phi(x^*x)<\infty\}$ (we will also use the same notation for weights on a \cst-algebra). We will also use at a certain point \emph{slice maps for weights}, as discussed for example in Section 1.5 of \cite{KV}.

\subsection{Locally compact quantum groups -- basic facts}
Throughout the paper symbols $\GG$, $\HH$ will denote \emph{locally compact quantum groups} in the sense of Kustermans and Vaes (\cite{KV}) -- we refer the reader to the latter paper, as well as \cite{KusNotes} and \cite{DKSS} for detailed definitions of the objects to be introduced below (note however that we stick to the conventions of the last of these three sources). A locally compact quantum group (often simply called quantum group in what follows) $\GG$ is defined in terms of a von Neumann algebra $L^{\infty}(\GG)$ equipped with a unital, normal coassociative *-homomorphism $\Delta:L^{\infty}(\GG)\to L^{\infty}(\GG) \wot L^{\infty}(\GG)$, called the \emph{coproduct} or \emph{comultiplication}. The symbols $\varphi$ and $\psi$ will denote respectively \emph{left} and \emph{right invariant Haar weights} of $\GG$, which are unique up to a positive scalar multiple, and $L^2(\GG)$ will denote the GNS Hilbert space of the right Haar weight $\psi$ (on which $L^{\infty}(\GG)$ acts). We will also frequently use
the corresponding \cst-algebra of ``continuous functions on $\GG$ vanishing at infinity'', $\C_0(\GG)\subset L^{\infty}(\GG)$. The comultiplication $\Delta$ restricts to a (still coassociative) morphism $\Delta\in\Mor\bigl(\C_0(\GG),\C_0(\GG)\tens\C_0(\GG)\bigr)$. Finally we have the universal object related to $\GG$, i.e.\ a \cst-algebra which we will denote by $\C_0^\uu(\GG)$, endowed with a comultiplication $\Delta^\uu\in\Mor\bigl(\C_0^\uu(\GG),\C_0^\uu(\GG)\tens\C_0^\uu(\GG)\bigr)$,  introduced and studied in \cite{Johanuniv}. We have a canonical surjective \emph{reducing morphism} $\Lambda \in \Mor (\C_0^u(\GG), \C_0(\GG))$, intertwining the respective coproducts. If $\Lambda$ is injective, we say that $\GG$ is \emph{coamenable}.

A fundamental object in the study of $\GG$ turns out to be the \emph{right multiplicative unitary} $W\in B(L^2(\GG) \ot \Ltwo(\GG))$, which satisfies the pentagonal equation
$W_{12} W_{13} W_{23} = W_{23} W_{12}$. In fact $W$ determines $\GG$ completely, as we have on one hand the equality: $L^{\infty}(\GG)=\{(\omega \ot \id)(W): \omega \in B(L^2(\GG))_*\}''$, and on the other $W$ implements the coproduct:
\[\Delta(x) =W(x \ot \I) W^*, \;\;\; x \in L^{\infty}(\GG).\]
We also have the equality $\C_0(\GG)=\overline{\Lin}\{(\omega \ot \id)(W): \omega \in B(L^2(\GG))_*\}$.
The symbols $S$ and $R$ will respectively denote the \emph{antipode} (a densely defined operator on $L^{\infty}(\GG)$) and the \emph{unitary antipode} (a bounded anti-isomorphism of $L^{\infty}(\GG)$). Both of these have natural `universal' versions, acting on (subsets) of $\C_0^u (\GG)$. The relation between $R$ and $S$ is best described via the \emph{scaling automorphism group} $(\tau_t)_{t \in \RR}$, which is a particular uniquely determined one-parameter group of automorphisms of $\Linf(\GG)$:
$S=R \circ \tau_{-\frac{i}{2}}$.
For an n.s.f.\ weight $\phi$ on a \cst-algebra or a von Neumann algebra
we denote by $(\sigma_t^\phi)_{t \in \RR}$ and $J^\phi$, the corresponding modular automorphism group and modular conjugation, respectively.

We have the intertwining relation
\[\label{delta(sigma)}
\Delta\circ\sigma_t^{\varphi} = (\tau_t\otimes\sigma^{\varphi}_t)\circ\Delta, \;\;\; t \in \RR.\]
The antipode is determined uniquely by its \emph{strong left invariance}, which reads
\[S((\id\otimes\varphi)(\Delta(a^*)(\I\otimes b) )) = (\id\otimes\varphi)((\I\otimes a^*)\Delta^\GG(b)),\;\;\; a,b\in\mathcal{N}_{\varphi}.\]
If $S=R$ we say that  the quantum group $\GG$ is of \emph{Kac type}.
In general the antipode connects the left and right Haar weights, via the \emph{scaling constant} $\lambda>0$: $\varphi \circ S = \lambda^{\frac{i}{2}} \psi$.
Also, the left and right Haar weights are related thorough the \emph{modular element} $\delta$, an unbounded operator affiliated with $\C_0(\GG)$,
by means of a Radon-Nikodym theorem: $\varphi = \psi(\delta^{1/2} \cdot \delta^{1/2})$.

The predual of $\Linf(\GG)$ is denoted, by the analogy with the classical case, $\Lone(\GG)$. It is a Banach algebra with respect to the convolution product given by the pre-adjoint map of the comultiplication. In general $\Lone(\GG)$ is not a $^*$-algebra, but it contains a subalgebra $\Lone_{\#} (\GG):=\{ f \in \Lone(\GG): f \circ S: \Dom (S) \to \CC \textup{ is bounded}\}$, which is equipped with a natural involution $f^\#:= \overline{f \circ S}$, where on the right-hand side we naturally mean the bounded extension of $f\circ S$. The `density conditions' (which are key to the development of the \cst-algebraic approach to topological quantum groups) mean that the sets $\Lin \Delta(\Linf(\GG)) (\Linf (\GG) \ot \I)$ and $\Lin \Delta(\Linf(\GG)) (\I \ot \Linf (\GG))$ are weak$^*$-dense in $\Linf(\GG) \wot \Linf(\GG)$.
Further  $\GG$ is called \emph{regular} if the norm closure of the set $\{ (\omega \ot \id)(\Sigma W): \omega \in B(\Ltwo(\GG))_*\}$ is equal to $\cK(\Ltwo(\GG))$ (this is a `flipped' version of the definition given in \cite{BaajSkandalis} -- they are both easily seen to be equivalent to the statement $\C_0(\GG) \C_0(\hh{\GG}) = \K(\Ltwo(\GG))$, see the next paragraph). A \emph{(unitary) representation of $\GG$} on a Hilbert space $\sH$ is a unitary $\mathcal{V} \in \M (\C_0(\GG) \ot \cK(\sH))$ such that $(\Delta \ot \id)(\mathcal{V})= \mathcal{V}_{12} \mathcal{V}_{13}$.

The multiplicative unitary $W$ allows for a very simple description of the \emph{dual locally compact quantum group} of $\GG$, which we will denote by $\hh{\GG}$: we have $L^{\infty}(\hGG)=\{(\id \ot \omega )(W): \omega \in B(L^2(\GG))_*\}''$. In what follows, when we consider more than one quantum group, we will adorn the respective symbols with the upper index describing which group we refer to: so for example another (equivalent) way of defining $\hGG$ would be via the equality $W^{\hGG}= \sigma \left( (W^{\GG})^* \right)$. Note that $\Linf(\hGG)$ (and therefore $\C_0(\hGG)$) is naturally represented on $\Ltwo (\GG)$, and we have $W^{\GG} \in \M(\C_0(\hGG) \ot \C_0(\GG)) \subset \Linf(\hGG) \wot \Linf(\GG)$. In fact $W$ admits a universal version, $\WW \in \M(\C_0^u(\hGG) \ot \C_0^u(\GG))$, such that $W= (\Lambda_{\hGG} \ot \Lambda_{\GG})(\WW)$. We may also consider natural `one-sided' reduced versions, $\wW = (\Lambda_{\hGG} \ot \id)(\WW)$ and $\Ww= (\id \ot \Lambda_{\GG})(\WW)$. Occasionally we will also need the \emph{left multiplicative unitary}
$V \in \Linf (\GG) \wot \Linf(\hGG)'$, which implements the coproduct of $\GG$ via the formula
\[ \Delta(x) = V^* (\I \ot x)V, \;\;\; x \in \Linf(\GG).\]

We will also sometimes denote the objects related to $\hGG$ simply by using hats, so for example $\hh{\varphi}$ and $\hh{\psi}$ denote the left and right Haar weights of $\hGG$, respectively.

\subsection{Morphisms between quantum groups and closed quantum subgroups}

Given two locally compact quantum groups $\GG$ and $\HH$, a morphism from $\HH$ to $\GG$ is represented via a \cst-morphism $\pi\in \Mor(\C_0^u(\GG), \C_0^u(\HH))$ intertwining the respective coproducts:
\[ (\pi \ot \pi) \circ \Delta_{\GG} = \Delta_{\HH} \circ \pi.\]
It can be equivalently described via a \emph{bicharacter} from $\HH$ to $\GG$, i.e.\ a unitary $V \in \M (\C_0(\hh{\GG})\tens\C_0(\HH))$ such that
\[ (\Delta_{\hGG} \ot \id_{\C_0(\HH)}) (V) = V_{23} V_{13},\]
\[ (\id_{\C_0(\hGG)} \ot \Delta_{\HH}) (V) = V_{12} V_{13}.\]
In fact
$V= (\id \ot \Lambda^{\HH} \circ \pi)(\wW^{\GG})$. Each morphism $\pi$ from $\HH$ to $\GG$,
determines uniquely a \emph{dual morphism} $\hh{\pi}$ from $\hGG$ to $\hHH$
such that $(\hh{\pi} \ot \id)(\WW^{\GG})= (\id \ot  \pi)(\WW^{\HH})$. Finally note that although $\pi \in \Mor(\C_0^u(\GG), \C_0^u(\HH))$ describing a morphism from $\HH$ to $\GG$ need not have a reduced version $\pi_r \in \Mor(\C_0(\GG), \C_0(\HH))$ (such that $\pi_r \circ \Lambda^{\GG} = \Lambda^{\HH} \circ \pi$), if we are given  $\pi_r \in \Mor(\C_0(\GG), \C_0(\HH))$ intertwining the coproducts, then it always admits the universal version $\pi \in \Mor(\C_0^u(\GG), \C_0^u(\HH))$.
   For more information on this equivalence and other pictures of morphisms we refer to \cite{SLW12},\cite{DKSS}.

\begin{definition}
We say a morphism from $\HH$ to $\GG$ given by $\pi\in \Mor(\C_0^u(\GG), \C_0^u(\HH))$ identifies $\HH$ with a closed quantum subgroup of $\GG$ (in the sense of Vaes) if there exists an injective normal unital $^*$-homomorphism $\gamma:\Linf(\hHH) \to \Linf(\hGG)$ such that
\[\bigl.\gamma\bigr|_{\C_0(\hh{\HH})}\comp\Lambda_{\hh{\HH}}=\Lambda_{\hh{\GG}}\comp\hh{\pi}.\]
Often in this case we simply say $\HH$ is a closed quantum subgroup of $\GG$.
\end{definition}

The above definition is equivalent to the existence of an injective normal unital $^*$-homomor--phism $\gamma:\Linf(\hHH) \to \Linf(\hGG)$ intertwining the respective coproducts.
It then follows $\pi(\C_0^u(\GG))= \C_0^u(\HH)$ -- if the latter condition holds, we say that the underlying quantum group morphism identifies $\HH$ with a closed quantum subgroup of $\GG$ \emph{in the sense of Woronowicz}. These two notions are studied in detail in \cite{DKSS}, where in particular one can find the proofs of the facts stated above.

Finally note that it follows from \cite[Proposition 10.5]{BV}
that there is a bijective correspondence between closed quantum subgroups of $\GG$ and the so-called \emph{Baaj-Vaes} subalgebras of $\Linf(\hh{\GG})$ (i.e.\ those von Neumann subalgebras $\sN \subset \Linf(\hGG)$ for which $\Delta_{\hGG}(\sN)\subset\sN\wot\sN$, $\hh{R}(\sN)=\sN$ and $\hh{\tau}_t(\sN)=\sN$ for all $t\in\RR$).
More precisely, if $\sN$ is a Baaj-Vaes subalgebra of $\Linf(\hh{\GG})$ then there is a locally compact quantum group $\HH$ such that $\sN = \Linf(\HH)$, and more or less by definition $\hHH$ is a closed quantum subgroup of $\GG$.

We will later need the following simple lemma.
\begin{lemma} \label{lem:slicebicharacter}
Let $\HH$, $\GG$ be locally compact quantum groups and let $V \in \M (\C_0(\hh{\GG})\tens\C_0(\HH))$ be a bicharacter. Then the space
\[\{ (\id \ot \omega) V : \omega \in B(\Ltwo(\HH))_*\}''\]
is a Baaj-Vaes subalgebra of $\Linf(\hGG)$.
\end{lemma}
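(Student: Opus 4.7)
The plan is to verify the three conditions characterising Baaj--Vaes subalgebras -- coproduct-invariance and invariance under $\hh{\tau}_t^{\GG}$ and $\hh{R}^{\GG}$ -- by slicing one leg of the appropriate identity satisfied by the bicharacter $V$. Write $a(\omega) := (\id \ot \omega)(V) \in \Linf(\hGG)$ for $\omega \in B(\Ltwo(\HH))_*$, so that $\sN$ is the ultraweak closure of the linear span of these elements; the inclusion $\sN \subset \Linf(\hGG)$ follows from $V \in \M(\C_0(\hGG) \ot \C_0(\HH)) \subset \Linf(\hGG) \wot \Linf(\HH)$.

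For coproduct-invariance I apply $\id \ot \id \ot \omega$ to the bicharacter identity $(\Delta_{\hGG} \ot \id)(V) = V_{23} V_{13}$. The left-hand side gives $\Delta_{\hGG}(a(\omega))$ by normality. For the right-hand side I use the slice-map characterisation: a normal element of $\Linf(\hGG) \wot \Linf(\hGG)$ belongs to $\sN \wot \sN$ iff all its one-sided slices lie in $\sN$. A short leg-notation computation, expanding $V$ ultraweakly as $\sum_k V_k^{(1)} \ot V_k^{(2)}$, shows that
\[
(\omega' \ot \id \ot \omega)(V_{23} V_{13}) = (\id \ot \tilde\omega)(V) = a(\tilde\omega),
\]
where $\tilde\omega(x) := \omega\bigl(x \cdot (\omega' \ot \id)(V)\bigr)$ is a normal functional; the opposite slice is handled symmetrically. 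Hence $\Delta_{\hGG}(a(\omega)) \in \sN \wot \sN$, and $\Delta_{\hGG}(\sN) \subset \sN \wot \sN$ follows by normality.

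For invariance under $\hh{\tau}_t^{\GG}$ and $\hh{R}^{\GG}$, I pass to the morphism picture $V = (\Lambda_{\hGG} \ot \Lambda^{\HH} \comp \pi)(\WW^{\GG})$ associated to the underlying $\pi \in \Mor(\C_0^\uu(\GG), \C_0^\uu(\HH))$, and combine the standard identities $(\hh{\tau}^\uu_t \ot \tau^\uu_t)(\WW^{\GG}) = \WW^{\GG}$ and $(\hh{R}^\uu \ot R^\uu)(\WW^{\GG}) = \WW^{\GG}$ with the fact that $\pi$ intertwines the scaling automorphism groups and unitary antipodes at the universal level. This yields
\[
(\hh{\tau}_t^{\GG} \ot \id)(V) = (\id \ot \tau_{-t}^{\HH})(V), \qquad (\hh{R}^{\GG} \ot \id)(V) = (\id \ot R^{\HH})(V).
\]
Slicing with $\omega$ then gives $\hh{\tau}_t^{\GG}(a(\omega)) = a(\omega \comp \tau_{-t}^{\HH})$ and $\hh{R}^{\GG}(a(\omega)) = a(\omega \comp R^{\HH})$, both lying in $\sN$; bijectivity of $\hh{\tau}_t^{\GG}$ and involutivity of $\hh{R}^{\GG}$ upgrade these inclusions to equalities.

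The chief technical ingredient is the compatibility of $\pi$ with the universal scaling groups and unitary antipodes -- a standard but non-trivial fact relying on the universal incarnation of these structures. The coproduct step is otherwise a clean formal manipulation; the potential subtlety of applying the anti-automorphism $\hh{R}^{\GG}$ through a slice map is absent, since partial slices commute with any $*$-anti-automorphism acting on an untouched leg.
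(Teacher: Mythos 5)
Your verification of the three Baaj--Vaes conditions follows essentially the same route as the paper: the coproduct condition is read off from $(\Delta_{\hGG}\ot\id)(V)=V_{23}V_{13}$ exactly as there (your slice computation, giving $(\omega'\ot\id\ot\omega)(V_{23}V_{13})=a(\tilde\omega)$ with $\tilde\omega=\omega(\,\cdot\,(\omega'\ot\id)(V))$, is correct), and the invariance under $\hh{R}^{\GG}$ and $\hh{\tau}^{\GG}_t$ rests on the identities $(\hh{R}^{\GG}\ot R^{\HH})(V)=V$ and $(\hh{\tau}^{\GG}_t\ot\tau^{\HH}_t)(V)=V$, which the paper simply quotes for bicharacters from [SLW12, Proposition 3.15], while you rederive them through the universal lift $V=(\Lambda_{\hGG}\ot\Lambda^{\HH}\comp\pi)(\WW^{\GG})$ and the compatibility of $\pi$ with the universal unitary antipodes and scaling groups; both routes are legitimate.

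There is, however, one genuine unaddressed point, and it is precisely the one the paper's proof handles in its opening sentence. You assert that $\sN$ ``is the ultraweak closure of the linear span'' of the slices $a(\omega)$. For an arbitrary, not necessarily self-adjoint subset of $\B(\Ltwo(\GG))$ this is not automatic: the double commutant of a non-self-adjoint set need not be $*$-closed, nor need it coincide with the weakly closed unital algebra the set generates. Without knowing that the slice space is (after closure) a $*$-algebra you do not know that $\sN$ is a von Neumann subalgebra at all -- which is part of the definition of a Baaj--Vaes subalgebra -- and your ``follows by normality'' upgrade from $\Delta_{\hGG}(a(\omega))\in\sN\wot\sN$ on generators to $\Delta_{\hGG}(\sN)\subset\sN\wot\sN$ uses exactly this identification. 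Multiplicativity of the slice space is easy: the other bicharacter leg $(\id\ot\Delta_{\HH})(V)=V_{12}V_{13}$ gives $a(\omega)a(\omega')=a\bigl((\omega\ot\omega')\comp\Delta_{\HH}\bigr)$; but self-adjointness of the closed slice space is a nontrivial fact, which the paper obtains by viewing $V$ as a unitary representation of $\HH$ on $\Ltwo(\GG)$ and invoking the standard result that the closed span of slices of a unitary representation of a locally compact quantum group is a (nondegenerate) \cst-algebra, so that its weak closure is a von Neumann algebra equal to $\sN$. Adding this observation (or the corresponding citation) makes your argument complete.
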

\begin{proof}
Considering $V$ as a unitary representation of $\HH$ on $\Ltwo(\GG)$ we conclude \[\sN = \{ (\id \ot \omega) V : \omega \in B(\Ltwo(\HH))_*\}''\]
is a von Neumann algebra.
Moreover, it follows from \cite[Proposition 3.15]{SLW12} that $\sN$ is preserved by $R^{\hGG}$ and $\tau^{\hGG}_t $ for all $t\in\mathbb{R}$. Finally, the bicharacter identity
\[(\Delta_{\hGG}\otimes\id)(V) = V_{23}V_{13}\] implies $\Delta_{\hGG}(\sN)\subset \sN\bar\otimes\sN$.
\end{proof}

\subsection{Actions of quantum groups and quantum  homogeneous spaces} \label{actions}

We use the notion of a (left) action of a quantum group on a von Neumann algebra in several occasions throughout the paper.
\begin{definition}
We say that a locally compact quantum group $\GG$ acts on a von Neumann algebra $\sM$ if there exists a unital injective normal $^*$-homomorphism $\alpha: \sM \to \Linf(\GG) \wot \sM$ such that
\[ (\Delta \ot \id_{\sM}) \comp \alpha =  (\id_{\Linf (\GG)} \ot  \alpha) \comp \alpha.\]
The map $\alpha$ is called a (left) action of $\GG$ on $\sM$.
\end{definition}

Each action as above automatically satisfies a von Neumann version of \emph{Podle\'s/nondegeneracy condition}: the set $\Lin \alpha(\sM) (\Linf(\GG) \wot \I_{\sM})$ is weak$^*$-dense in $\Linf(\GG) \wot \sM$ (\cite[Proposition 2.4]{KSprojections}).
The action $\alpha$ is said to be \emph{ergodic} if its \emph{fixed point algebra} $\textup{Fix}(\alpha):=\{x \in \sM: \alpha(x) = \I \ot x\}$ is trivial, i.e. equal to $\CC \I_{\sM}$.
A large source of ergodic actions is provided by the \emph{embeddable quantum homogeneous spaces} (\cite{KaspSol}), i.e.\ von Neumann subalgebras $\sN \subset \Linf(\GG)$ which are left coideals: $\Delta(\sN) \subset \Linf(\GG) \wot \sN$. The action of $\GG$ on $\sN$ is naturally given by the suitable restriction of the coproduct. In fact there is a natural co-duality between embeddable quantum homogeneous spaces for $\GG$ and for $\hGG$; if $\sN$ is a former, then $\tilde{\sN}=\{y \in \Linf(\hGG): \forall_{x \in \sN}\, xy=yx\}$ is a latter, and we have $\tilde{\tilde{\sN}}=\sN$ (\cite{KaspSol}).
A left coideal $\sL \subset \Linf(\GG)$ is said to be \emph{normal} if
\[
\ww^{\GG}(\I\tens\sL){\ww^{\GG}}^*\subset\Linf(\hh{\GG})\vtens\sL.
\]
All the notions and statements above have natural counterparts for the right actions (right coideals, etc.).

Assume that $\HH$ is a closed quantum subgroup of $\GG$, determined by a morphism $\pi \in \Mor (\C_0^u(\GG), \C_0^u(\HH))$. Then $\HH$ acts on $\Linf(\GG)$ on the right (we will modify the language slightly and say simply that $\HH$ acts on $\GG$) by the following formula
\begin{equation} \alpha_{\HH}(x) =  V(x \ot \I) V^*,\;\;\; x \in \Linf(\GG), \label{subgroupaction}\end{equation}
where $V\in \M(\C_0(\hGG) \ot \C_0(\HH))\subset \Linf(\hGG) \wot \Linf(\HH)$  denotes the bicharacter associated to the morphism $\pi$.

We then call the fixed point space of $\alpha_{\HH}$ the \emph{algebra of bounded functions on the quantum homogeneous space $\GG/\HH$} and denote it by $\Linf (\GG/\HH)$.

We will be also interested in its topological version, which should be a \cst-algebra contained in $\Linf(\GG / \HH)$, on which $\GG$ naturally acts (for the notion of action of a locally compact quantum group on a \cst-algebra we refer for example to \cite{SoltanActions} or \cite{Vaes-induction}). In general the problem of its existence remains open, but for regular quantum groups it was solved in \cite{Vaes-induction}, where the following result was shown.

\begin{theorem}\cite[Theorem 6.1]{Vaes-induction} \label{defhomspace}
Let $\GG$ be a regular locally compact quantum group and $\HH$ its closed quantum subgroup. Then there exists a unique \cst-algebra $\sA \subset \Linf (\GG/\HH)$ such that
\begin{rlist}
\item $\sA$ is dense in strong operator topology in $ \Linf (\GG/\HH)$;
\item $\Delta(\sA) \subset \M (\C_0 (\GG) \ot \sA)$ and  the map $\Delta|_{\sA}$ defines an action of $\GG$ on $\sA$;
\item $\Delta(\Linf(\GG/\HH)) \subset \M (\cK(\Ltwo (\GG)) \ot \sA)$ and  the restricted map $\Delta|_{\Linf(\GG/\HH)} \to \M (\cK(\Ltwo (\GG)) \ot \sA)$ is strictly continuous on bounded subsets.
\end{rlist}
We denote then $\sA$ by $\C_0(\GG/\HH)$ and call it the algebra of continuous functions on the quantum homogeneous space $\GG/\HH$.
\end{theorem}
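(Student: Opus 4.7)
The plan is to define $\sA$ as a norm-closed linear subspace of $\Linf(\GG/\HH)$ built explicitly from slices of the coproduct, and then to extract from the regularity hypothesis the $\C^*$-algebraic and multiplier-algebra structure demanded by conditions (i)--(iii). Concretely, I would set
\[
\sA := \overline{\Lin}\bigl\{(\omega \ot \id)(\Delta(x)) : x \in \Linf(\GG/\HH),\ \omega \in B(\Ltwo(\GG))_*\bigr\}.
\]
This subspace sits inside $\Linf(\GG/\HH)$ because for $x$ in the coideal $\Linf(\GG/\HH)$ one has $\Delta(x)\in\Linf(\GG)\wot\Linf(\GG/\HH)$. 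Condition (i) follows from a bounded-approximate-identity argument in $\Lone(\GG)$: choosing a net $(\omega_\lambda)$ so that $(\omega_\lambda\ot\id)\Delta(x)\to x$ $\sigma$-weakly for every $x\in\Linf(\GG/\HH)$ yields SOT-density of $\sA$ at once.

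The main technical step is to show that $\sA$ is closed under multiplication (closure under involution is immediate, replacing $\omega$ by $\omega^*$ and $x$ by $x^*$), and this is where regularity enters crucially. Writing $\Delta(x)=W(x\ot\I)W^*$ and expanding a product $\bigl((\omega_1\ot\id)\Delta(x)\bigr)\bigl((\omega_2\ot\id)\Delta(y)\bigr)$ as a slice on the first two legs of an operator on $\Ltwo(\GG)^{\ot 3}$, I would use the pentagonal relation $W_{12}W_{13}W_{23}=W_{23}W_{12}$ to rearrange $W$-factors so that the product appears as a combination of slices of $\Delta(xy)$ multiplied by operators drawn from the norm-closure of $\{(\mu\ot\id)(\Sigma W):\mu\in B(\Ltwo(\GG))_*\}$; by regularity this closure is $\cK(\Ltwo(\GG))$, and further slicing against normal functionals returns elements of $\sA$. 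Conditions (ii) and (iii) then fall out from the same machinery: coassociativity immediately gives $\Delta(\sA)\subset\Linf(\GG)\wot\sA$, and the multiplier upgrades to $\M(\C_0(\GG)\ot\sA)$ and $\M(\cK(\Ltwo(\GG))\ot\sA)$ are obtained by combining Podle\'s density of $\Delta|_\sA$ with regularity; strict continuity on bounded subsets in (iii) follows from $\sigma$-weak continuity of $\Delta$ and the coincidence of strong$^*$ and strict topologies on bounded subsets of the multiplier algebras involved.

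Uniqueness is the elegant closing move. Given another candidate $\sA'$ satisfying (i)--(iii), condition (iii) applied to $\sA'$ forces every generating slice $(\omega\ot\id)\Delta(x)$ of $\sA$ to lie in $\sA'$, hence $\sA\subset\sA'$. For the reverse inclusion one invokes condition (ii) for $\sA'$: the restricted coproduct defines a continuous action of $\GG$ on $\sA'$, so the Podle\'s-density equality
\[
\sA'=\overline{\Lin}\bigl\{(\omega\ot\id)\Delta(a'):\omega\in\Lone(\GG),\ a'\in\sA'\bigr\}
\]
places every element of $\sA'$ inside $\sA$, since each $a'$ belongs to $\Linf(\GG/\HH)$ and $\Lone(\GG)\hookrightarrow B(\Ltwo(\GG))_*$. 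The hard part I expect to be the multiplication-closure step: the pentagonal rearrangement absorbing the $W$-twist into the regularity set $\{(\mu\ot\id)(\Sigma W)\}$ demands careful bookkeeping, and this is precisely where the regularity hypothesis cannot be dispensed with.
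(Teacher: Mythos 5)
Your construction cannot be correct, and the problem is visible before any pentagon bookkeeping: since $\I\in\Linf(\GG/\HH)$, your set of generators contains $(\omega\ot\id)\Delta(\I)=\omega(\I)\I$, so your $\sA$ is always unital. But $\C_0(\GG/\HH)$ is non-unital whenever the homogeneous space is non-compact; already in the baseline case $\HH=\{e\}$ and $\GG=G$ a classical non-compact group (where $\Linf(\GG/\HH)=\Linf(G)$ and the theorem must return $\C_0(G)$), your recipe produces the norm closure of $\{f\ast x: f\in\Lone(G),\,x\in\Linf(G)\}$, i.e.\ the algebra of bounded uniformly continuous functions, which strictly contains $\C_0(G)$. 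So the algebra you define is the ``uniformly continuous'' completion of the coideal, not its $\C_0$-version, and by the uniqueness part of the very statement it must violate (ii) or (iii); the verification you sketch for those conditions therefore cannot be repaired. The uniqueness argument has a matching flaw: from $\Delta(\Linf(\GG/\HH))\subset\M(\cK(\Ltwo(\GG))\ot\sA')$ you slice with $\omega\in\B(\Ltwo(\GG))_*$ and claim the result lies in $\sA'$, but slices of multipliers only land in $\M(\sA')$ (e.g.\ $(\omega\ot\id)(\I\ot\I)=\omega(\I)\I\notin\sA'$ when $\sA'$ is non-unital), so the inclusion $\sA\subset\sA'$ does not follow --- indeed it is false, as the example shows. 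The genuinely hard content of the theorem is precisely how to cut the slice space down to the correct non-unital subalgebra, and norm-closing slices does not do it.

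For comparison: the paper does not reprove this result at all --- it quotes Vaes's Theorem 6.1 and only adds a remark explaining why strong regularity can be relaxed to regularity. Vaes's actual construction is of a different nature: one builds a \cst-module $\mathcal{J}$ over $\C_0(\hHH)$, identifies $\cK(\mathcal{J})$ with a crossed product $\GG\ltimes\C_0(\GG/\HH)$, and recovers $\C_0(\GG/\HH)$ as the Landstad-type (generalized fixed point) algebra via his Theorem 6.7, which is where regularity is genuinely used; uniqueness is then a separate argument. Consistent with this, the present paper's Theorem \ref{C*-quot} shows that only in the special case of an \emph{open} subgroup does $\C_0(\GG/\HH)$ admit a slice description, and even then as $\overline{\{(\omega\ot\id)\Delta^\GG(\I_\HH):\omega\in\Lone(\GG)\}}$ --- slices of the single projection $\I_\HH$, not of all of $\Linf(\GG/\HH)$ --- and its proof already relies on the Vaes construction rather than replacing it.
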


\begin{remark}
The statements of Theorem 6.1 and Corollary 6.4 in \cite{Vaes-induction} formally involve the assumption that $\GG$ is \emph{strongly regular}. 
But, as already noted there (see the paragraph after Theorem 6.2 in that paper), regularity is sufficient for both results;
for the convenience of the reader we recall the details. 
Applying \cite[Theorem 6.7]{Vaes-induction} (which only requires regularity) to the case $B = \mathcal{K}(\mathcal{J})$,
where $\mathcal{J}$ is the \cst-$\C(\hHH)$-module defined in the beginning of the proof of 
\cite[Theorem 6.1]{Vaes-induction}, proves the existence of $\C_0(\GG/\HH)$ (and the uniqueness follows without any further assumptions as observed at the end of the same proof), as well as the isomorphism
$\mathcal{K}(\mathcal{J}) \cong \GG \ltimes \C_0(\GG/\HH)$, which then by the construction of
$\mathcal{J}$ implies the Morita equivalence $\C_0(\hHH) \sim \GG \ltimes \C_0(\GG/\HH)$.
Now the proof of \cite[Corollary 6.4]{Vaes-induction} only needs this fact together with the biduality theorem which again does not require any further assumptions on $\GG$. 
In fact, strong regularity is only needed to prove the imprimitivity theorem in the full crossed product setting.
\end{remark}

\subsection{Normal quantum subgroups and quotient quantum groups}

The definition of a closed normal quantum subgroup was introduced in \cite{VV}.
\begin{definition} \label{def:normal}
Let $\GG$ be a locally compact quantum group and $\KK$ its closed quantum subgroup identified by an injective morphism $\gamma:\Linf(\hKK) \to \Linf(\hGG)$. We say that $\KK$ is a normal quantum subgroup of $\GG$ if $\gamma(\Linf(\hKK))$ is a normal coideal in $\Linf(\hGG)$ (see the previous subsection).
\end{definition}

The key consequence (and actually a characterization) of the fact that $\KK$ is a normal quantum subgroup of $\GG$  is that  $\Linf(\GG/\KK)$ is a Baaj-Vaes subalgebra of $\Linf(\GG)$, so $  \GG/\KK$ becomes a locally compact quantum group, naturally called a \emph{quotient quantum group of $\GG$}. As $\Linf(  \GG/\KK)$ inherits all its quantum group structures (e.g. the antipode) from $\Linf(\GG)$, quotient quantum groups of quantum groups of Kac type are again of Kac type.

 The above facts lead naturally to the concept of short exact sequences, studied in detail in \cite{VV} (see also \cite{KSProjExt}). Denoting $\HH = \GG/\KK$ we have a short exact sequence
 \begin{equation}\label{ex1}\{e\}\rightarrow\KK\rightarrow\GG\rightarrow \HH\rightarrow\{e\}\end{equation}
together with the dual exact sequence
\begin{equation}\label{ex2}\{e\}\rightarrow\hat\HH\rightarrow\hat\GG\rightarrow\hat\KK\rightarrow\{e\}\end{equation}
where the embedding $\Linf(\HH)\subset \Linf(\GG)$,  yields the identification of   $\widehat{\HH}$ with a (Vaes) closed normal subgroup of $\hat\GG$.  Thus we have a bijective correspondence between normal subgroups of $\GG$ and $\hat\GG$ given by \eqref{ex1} and \eqref{ex2}.

Finally note that another characterisation of normality is based on the (alluded to earlier) notion of the \emph{left} homogeneous space $\KK \backslash \GG$: $\KK$ is normal in $\GG$ if and only if
\begin{equation}\label{normcondwang}\Linf(\GG/\KK)= \Linf(\KK \backslash \GG).
\end{equation}
In order to see this let us note that in general we have $R^\GG(\Linf(\GG/\KK)) = \Linf(\KK \backslash \GG)$. Since in the case of normal $\KK$, $\Linf(\GG/\KK)$ is a Baaj-Vaes subalgebra we conclude that   $\Linf(\GG/\KK)= \Linf(\KK \backslash \GG)$. Conversely, the   assumption \eqref{normcondwang}  shows that  $\Linf(\GG/\KK)$ is preserved by  $R^\GG$. Using \cite[Proposition 4.4]{KSprojections}  we conclude  that  $\KK\subset\GG$ is normal (for the compact case see \cite[Proposition 3.2]{Wangnormal} and \cite[Theorem 4.6]{KSSExact}).

\section{Definition of an open quantum subgroup}\label{Sec:opendef}

In this section we define open quantum subgroups and establish some of their basic properties.

As in the definitions of closed quantum subgroups, due to the fact that quantum groups are virtual objects, and not actual topological spaces, we need to reformulate the openness condition in the language of the associated `function algebras'. As usual, we do it first on the level of classical groups, to make sure that our definition coincides then with the straightforward topological concept.

There are in fact several equivalent ways to characterize openness of a subgroup of a locally compact group $G$.
Our definition is inspired by the following classical characterization of open subgroups due to Greenleaf.

\begin{theorem}\cite{Greenleaf65}\label{Greenleaf65}
Let $G$ and $K$ be locally compact groups.
$K$ is homeomorphic to an open subgroup of $G$ if and only if
there exists an injective homomorphism from $L^1(K)$ into $L^1(G)$.
\end{theorem}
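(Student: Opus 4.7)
The forward direction is essentially a direct computation: assuming $K$ is an open subgroup of $G$, I would normalize the Haar measures so that $\mu_K$ is precisely the restriction of $\mu_G$ to Borel subsets of $K$ (which is possible because $K$ is open, hence Borel, and translation by elements of $K$ preserves this restriction together with its left-invariance). Define $\Phi\colon\Lone(K)\to\Lone(G)$ by extension by zero. This is a linear $L^1$-isometry, hence injective, and a short Fubini computation shows that $\Phi(f)\ast\Phi(g)$ is supported in $K$ and restricts there to $f\ast g$: the convolution integrand $\Phi(f)(y)\Phi(g)(y^{-1}x)$ vanishes unless $y\in K$ and $y^{-1}x\in K$, which together force $x\in K$.

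The converse is the substantial direction. The plan is to extract from the injective algebra homomorphism $\Phi\colon\Lone(K)\to\Lone(G)$ an honest continuous group homomorphism $\iota\colon K\to G$, and then to show that $\iota$ has open image and is a homeomorphism onto it. To produce $\iota$, I would first lift $\Phi$ to a convolution-algebra homomorphism $\tilde\Phi\colon\M(K)\to\M(G)$ between the measure algebras by multiplier-algebra techniques, and then study the image of a point mass $\delta_k$. Using that $\delta_k$ is invertible in $\M(K)$ with inverse $\delta_{k^{-1}}$, together with extreme-point and Fourier-analytic considerations (ultimately resting on the Cohen--Rudin classification of idempotent measures in the abelian case and its noncommutative analogues), one argues that $\tilde\Phi(\delta_k)$ is again a scalar multiple of a point mass $\delta_{\iota(k)}$. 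Multiplicativity of $\tilde\Phi$ forces $\iota$ to be a group homomorphism, weak-$\ast$ considerations give its continuity, and injectivity of $\Phi$ descends to injectivity of $\iota$.

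The main obstacle, and the heart of the argument, is to show that $\iota(K)$ is open in $G$. For this I would invoke the classical dichotomy that any subgroup of a locally compact group is either open or carries zero Haar measure. If $\iota(K)$ had zero Haar measure in $G$, one would trace through the construction to conclude that every $\Phi(f)$ is concentrated on $\iota(K)$ and hence vanishes $\mu_G$-almost everywhere, contradicting injectivity of $\Phi$. Hence $\iota(K)$ is open. The homeomorphism statement—that $\iota$ identifies $K$ with $\iota(K)$ as topological groups and not merely as abstract groups—then follows from the open mapping theorem for locally compact groups: a continuous bijective homomorphism between locally compact groups (here $K$ and $\iota(K)$, the latter being locally compact because it is open in $G$) is automatically a homeomorphism.
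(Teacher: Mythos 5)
The paper itself offers no proof of this statement -- it is quoted from Greenleaf's article -- so your attempt has to be measured against what a correct proof actually requires. Your forward direction is fine and is the standard argument: extension by zero from an open subgroup gives an isometric algebra embedding of $L^1(K)$ into $L^1(G)$.

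The converse, however, has a genuine gap, and it sits exactly at the step you delegate to ``extreme-point and Fourier-analytic considerations'': the claim that the extension $\tilde\Phi$ sends each $\delta_k$ to a scalar multiple of a point mass. This is precisely where Greenleaf's theorem uses the hypothesis that the homomorphism is \emph{norm-decreasing} (a Wendel-type argument with norm-one invertible measures), or alternatively one needs $*$-preservation/positivity -- which is what Definition \ref{defm} dualizes. From a bare injective algebra homomorphism the claim is false, and so is the unqualified statement if ``homeomorphic'' is read (as you read it, and as the paper intends) as a topological group identification: via the Fourier transform $\ell^1(\mathbb{Z}_2\times\mathbb{Z}_2)\cong\mathbb{C}^4\cong\ell^1(\mathbb{Z}_4)$ as algebras, so an injective (even bijective) homomorphism exists, yet $\mathbb{Z}_2\times\mathbb{Z}_2$ is not topologically isomorphic to any open subgroup of $\mathbb{Z}_4$; and by linear independence the images of the four point masses cannot all be multiples of point masses. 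The Cohen--Rudin idempotent theorem is an abelian result and its ``noncommutative analogues'' are not available in this generality (the structure of merely bounded homomorphisms of $L^1$-algebras of general locally compact groups is not known), so it cannot supply this step. Your proof never invokes any contractivity or $*$-hypothesis, which is the telltale sign of the gap. (Also, the extension of $\Phi$ to $M(K)\to M(G)$ is not automatic: it needs boundedness of $\Phi$ and an approximate-identity/weak-$*$ or strict-topology argument, which again is where Greenleaf uses the norm bound.)

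The endgame is also gapped. The dichotomy ``open or Haar-null'' applies to \emph{measurable} subgroups, and, more seriously, openness of $\iota(K)$ together with the open mapping theorem does not give the homeomorphism: the open mapping theorem for locally compact groups requires $\sigma$-compactness of the domain, and there is a failure mode your argument cannot see at all, namely a finer group topology on the same underlying group. For $K=\mathbb{R}_d$ (the reals made discrete) and $G=\mathbb{R}$ with $\iota$ the identity, $\iota(K)=G$ is open and of full measure, so neither your null-set contradiction nor the open mapping theorem intervenes, yet $\iota$ is not a homeomorphism onto its image. What actually excludes this (and completes a correct proof) is re-using the hypothesis that the range of $\Phi$ consists of \emph{absolutely continuous} measures: with $\tilde\Phi(\delta_k)=c_k\delta_{\iota(k)}$ one must show that the pushforward of $f\,d\mu_K$ lands in $L^1(G)$ only if $\mu_K$ transported by $\iota$ is equivalent to the restriction of $\mu_G$, which forces $\iota$ to be open onto its image; in the example above the image measures are purely atomic and hence zero in $L^1(\mathbb{R})$, contradicting injectivity. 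As written, your last paragraph does not make this argument, so the homeomorphism claim is unproved.
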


Obviously one can dualize the above to find an equivalent characterization in terms of $L^\infty$-algebras, where it leads to the existence of a surjective map from $\Linf (G)$ onto $\Linf (K)$. This explains the following definition.

\begin{definition}\label{defm}
Let $\GG$ and $\HH$ be locally compact quantum groups. We say $\HH$ is an open quantum subgroup of $\GG$ if there is
a normal surjective unital $*$-homomorphism $\pi:\Linf(\GG)\rightarrow\Linf(\HH)$, intertwining the respective coproducts:
\[(\pi\tens\pi)\circ\Delta^{\GG} = \Delta^{\HH}\circ\pi.\]
\end{definition}

By Theorem \ref{Greenleaf65} open subgroups of locally compact groups $G$ are also open in the sense of Definition \ref{defm}
when $G$ is regarded as a locally compact quantum group.
It is also obvious from Definition \ref{defm} and for example \cite[Section 6]{DKSS} that every (closed) quantum subgroup of a discrete quantum group $\GG$ is an open quantum subgroup of $\GG$.

Definition \ref{defm} provides a connection between the (Haar) measure structures of $\GG$ and $\HH$, as expected for open subgroups.
Indeed we make this more precise later in this section by showing the compatibility of Haar weights and other related structures (also Theorem \ref{thm:weightsopen} will provide a converse statement: compatibility of Haar weights implies openness).

But first we show the map $\pi$ in Definition \ref{defm} induces a homomorphism from $\HH$ to $\GG$, i.e. a morphism on the \cst-algebraic level.

\begin{proposition}\label{vNC}
Suppose $\HH$ is an open quantum subgroup of $\GG$, identified via the map $\pi:\Linf(\GG)\rightarrow\Linf(\HH)$.
Then $\pi|_{\C_0(\GG)}\in\Mor(\C_0(\GG),\C_0(\HH))$. In particular $\pi|_{\C_0(\GG)}$ is  (the reduced version of) a morphism from $\HH$ to $\GG$.
\end{proposition}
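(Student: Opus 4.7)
The plan is to construct a bicharacter from $\HH$ to $\GG$ out of the given $\pi$ and then invoke the correspondence between bicharacters and reduced morphisms recalled in Subsection 1.2 (from \cite{SLW12}).

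First, I would set $V := (\id \ot \pi)(W^\GG) \in \Linf(\hGG) \vtens \Linf(\HH)$. Since $W^\GG$ is unitary and $\id \ot \pi$ is a normal unital $*$-homomorphism, $V$ is unitary. The two bicharacter identities for $V$ then follow from the analogous identities for $W^\GG$: applying $\id \ot \id \ot \pi$ to $(\Delta^{\hGG} \ot \id)(W^\GG) = W^\GG_{23} W^\GG_{13}$ yields $(\Delta^{\hGG} \ot \id)(V) = V_{23} V_{13}$, while applying $\id \ot \pi \ot \pi$ to $(\id \ot \Delta^\GG)(W^\GG) = W^\GG_{12} W^\GG_{13}$, combined with the intertwining assumption $(\pi \ot \pi) \circ \Delta^\GG = \Delta^\HH \circ \pi$, yields $(\id \ot \Delta^\HH)(V) = V_{12} V_{13}$.

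The main technical step will then be to upgrade $V$ from a unitary in the von Neumann tensor product to an element of $\M(\C_0(\hGG) \ot \C_0(\HH))$, so that the \cst-algebraic correspondence of \cite{SLW12} applies. Here the idea is that each bicharacter identity, read appropriately, identifies $V$ as a unitary corepresentation: the second identity exhibits $V$ as a unitary representation of $\HH$ on $\Ltwo(\GG)$ with coefficient algebra $\Linf(\hGG) \subset \B(\Ltwo(\GG))$, and the standard passage from von Neumann to \cst-level representations of locally compact quantum groups (as in \cite{SLW12}) then forces $V \in \M(\K(\Ltwo(\GG)) \ot \C_0(\HH))$. Symmetrically, the first identity yields $V \in \M(\C_0(\hGG) \ot \K(\Ltwo(\HH)))$; combining the two membership statements gives $V \in \M(\C_0(\hGG) \ot \C_0(\HH))$.

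Once $V$ is recognised as a bicharacter in the \cst-algebraic sense, \cite{SLW12} associates to it a unique reduced morphism $\pi_r \in \Mor(\C_0(\GG), \C_0(\HH))$ intertwining the coproducts and satisfying $V = (\id \ot \pi_r)(W^\GG)$. Comparing this with the defining formula $V = (\id \ot \pi)(W^\GG)$, the maps $\pi$ and $\pi_r$ agree on each slice $(\omega \ot \id)(W^\GG)$ for $\omega \in \B(\Ltwo(\GG))_*$; since such slices span a norm-dense subspace of $\C_0(\GG)$ and both maps are continuous, $\pi|_{\C_0(\GG)} = \pi_r \in \Mor(\C_0(\GG), \C_0(\HH))$. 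The ``in particular'' assertion is then immediate from the last sentence of Subsection 1.2, stating that every reduced morphism admits a universal counterpart.
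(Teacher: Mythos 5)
Your first two steps are fine, and in fact they shadow the paper's own argument: the paper also works with $U=(\id\ot\pi)(\ww^{\GG})$ and proves the membership you want directly, by writing $U_{13}=U_{12}^{*}\ww^{\HH}_{23}U_{12}\ww^{\HH*}_{23}$ and reading off $U\in\M(\K(\Ltwo(\GG))\ot\C_0(\HH))$, which already gives $\pi(\C_0(\GG))\subset\M(\C_0(\HH))$ after slicing the first leg (so your appeal to automatic norm-continuity of unitary corepresentations is a citation of a fact the paper reproves by hand). Two of your later claims, however, do not hold up. First, the assertion that membership in $\M(\K(\Ltwo(\GG))\ot\C_0(\HH))$ and in $\M(\C_0(\hGG)\ot\K(\Ltwo(\HH)))$ "combine" to give $V\in\M(\C_0(\hGG)\ot\C_0(\HH))$ is not a formal consequence of the two statements and would need a separate argument (it is also not needed for the inclusion $\pi(\C_0(\GG))\subset\M(\C_0(\HH))$).

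The genuine gap is the last step. The correspondence in \cite{SLW12} attaches to a bicharacter $V\in\M(\C_0(\hGG)\ot\C_0(\HH))$ a morphism at the \emph{universal} level, $\pi_u\in\Mor(\C_0^u(\GG),\C_0^u(\HH))$ (equivalently a right or left quantum group homomorphism $\C_0(\GG)\to\M(\C_0(\GG)\ot\C_0(\HH))$); it does \emph{not} produce a reduced morphism in $\Mor(\C_0(\GG),\C_0(\HH))$. Indeed, Subsection 1.2 of this very paper stresses that a quantum group morphism need not admit a reduced version at all, so "the unique reduced morphism associated to $V$" is not an available device, and invoking it assumes precisely what the proposition asserts. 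What your argument never addresses is the non-degeneracy of $\pi|_{\C_0(\GG)}$, i.e. that $\pi(\C_0(\GG))\C_0(\HH)$ and $\C_0(\HH)\pi(\C_0(\GG))$ are norm-dense in $\C_0(\HH)$; this is the real content beyond the multiplier inclusion, and the paper obtains it by slicing the identity $U_{13}\ww^{\HH}_{23}=U_{12}^{*}\ww^{\HH}_{23}U_{12}$ with normal functionals in the first two legs and using unitarity of $U$ to recover all of $\C_0(\HH)$. Supplying such an argument (or any other proof of non-degeneracy) is what your proposal is missing.
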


\begin{proof}
Define a unitary $U = (\id\otimes\pi)\ww^\GG \in \Linf(\hat\GG)\overline\otimes \Linf(\HH)$.
Then
\begin{align*}
\ww^\HH_{23}U_{12} \ww^{\HH*}_{23}& = (\id\otimes\Delta^\HH)(U)
= (\id\otimes  (\pi\otimes\pi)\circ\Delta^\GG)(W^\GG)\\
& = (\id\otimes  \pi\otimes\pi)(W^\GG_{12}W^\GG_{13})
=  U_{12}U_{13} .
\end{align*}
Thus
\[
U_{13} = U_{12}^*\ww^\HH_{23}U_{12} \ww^{\HH*}_{23}\in\M(\cK(\Ltwo(\GG))\otimes\cK(\Ltwo(\HH))\otimes\C_0(\HH)),
\]
which implies
$U\in\M(\cK(\Ltwo(\GG)) \otimes\C_0(\HH))$, and therefore $\pi(\C_0(\GG))\subset\M(\C_0(\HH))$.

We have
\[\begin{split}
\pi(\C_0(\GG))\C_0(\HH)
&=
\{(\mu\otimes\nu\otimes\id) (U_{13}\ww^{\HH}_{23}) :  \mu\in\B(\Ltwo(\GG))_*, \, \nu\in \B(\Ltwo(\HH))_*\}^{-\|\cdot\|}
\\&=
\{(\mu\otimes\nu\otimes\id) (U_{12}^* \ww^{\HH*}_{23}U_{12}) :  \mu\in\B(\Ltwo(\GG))_*, \, \nu\in \B(\Ltwo(\HH))_*\}^{-\|\cdot\|}
\\&=
\C_0(\HH),
\end{split}\]
and similarly,
\[\begin{split}
\C_0(\HH) \pi(\C_0(\GG))
&=
{\{ (\mu\otimes\nu\otimes\id) (\ww^{\HH*}_{23}U_{13}^*):  \mu\in\B(\Ltwo(\GG))_*, \, \nu\in \B(\Ltwo(\HH))_*\}}^{-\|\cdot\|}
\\&=
\{(\mu\otimes\nu\otimes\id) (U_{12}^* \ww^{\HH}_{23}U_{12}) :  \mu\in\B(\Ltwo(\GG))_*, \, \nu\in \B(\Ltwo(\HH))_*\}^{-\|\cdot\|}
\\&= \C_0(\HH) .
\end{split}\]
Hence $\pi|_{\C_0(\GG)}\in\Mor(\C_0(\GG),\C_0(\HH))$.
\end{proof}

\begin{corollary}\label{antipodes-same}
Suppose $\pi:\Linf(\GG)\rightarrow\Linf(\HH)$ identifies $\HH$ with an open subgroup of $\GG$.
Then
$\pi\circ R^\GG = R^\HH\circ\pi$ and
$\pi\circ\tau^\GG_t =\tau_t^\HH\circ\pi$.

\end{corollary}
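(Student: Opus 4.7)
The plan is to lift $\pi$ to a quantum group morphism on the universal level, invoke the known intertwining of the universal antipode and scaling group by such morphisms, and descend back to the von Neumann algebraic setting by normality.

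First, I would observe that by Proposition~\ref{vNC} the restriction $\pi_r := \pi|_{\C_0(\GG)}$ lies in $\Mor(\C_0(\GG),\C_0(\HH))$, and the intertwining $(\pi\tens\pi)\circ\Delta^{\GG} = \Delta^{\HH}\circ\pi$ restricts to the analogous reduced statement. By the universal lifting property of reduced morphisms recalled at the end of Subsection~1.2, $\pi_r$ admits a universal counterpart $\pi^{\uu} \in \Mor(\C_0^\uu(\GG),\C_0^\uu(\HH))$ which satisfies $\pi_r\circ\Lambda^{\GG} = \Lambda^{\HH}\circ\pi^{\uu}$ and intertwines the universal coproducts.

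Second, I would invoke the standard fact (established in \cite{Johanuniv}, see also \cite{SLW12}) that any morphism of locally compact quantum groups expressed on the universal level automatically intertwines the universal antipodes and scaling groups:
\[
\pi^{\uu}\circ R^{\GG,\uu} = R^{\HH,\uu}\circ\pi^{\uu}, \qquad \pi^{\uu}\circ \tau_t^{\GG,\uu} = \tau_t^{\HH,\uu}\circ\pi^{\uu}, \quad t\in\RR.
\]
Combining this with the elementary compatibility $R^{\GG}\circ\Lambda^{\GG} = \Lambda^{\GG}\circ R^{\GG,\uu}$ (and analogously for $\HH$ and for $\tau_t$), the intertwining relation $\pi_r\circ\Lambda^{\GG} = \Lambda^{\HH}\circ\pi^{\uu}$, and the surjectivity of $\Lambda^{\GG}$ onto $\C_0(\GG)$, I obtain the desired identities on $\C_0(\GG)$:
\[
\pi_r\circ R^{\GG} = R^{\HH}\circ\pi_r, \qquad \pi_r\circ \tau_t^{\GG} = \tau_t^{\HH}\circ\pi_r.
\]

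Finally, I would extend these to all of $\Linf(\GG)$. Since $R^{\GG},\tau_t^{\GG}$ are normal on $\Linf(\GG)$, $R^{\HH},\tau_t^{\HH}$ are normal on $\Linf(\HH)$, and $\pi$ is normal by hypothesis, both sides of each intended identity define normal maps $\Linf(\GG)\to\Linf(\HH)$. They agree on the $\sigma$-weakly dense subalgebra $\C_0(\GG)$, and hence everywhere.

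The only non-routine step is the second one: the automatic intertwining of the universal $R^{\uu}$ and $\tau_t^{\uu}$ by a morphism of quantum groups on the universal level. Once that well-known fact is cited, the remainder of the argument is pure density and normality, so no genuine obstacle is expected.
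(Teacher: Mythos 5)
Your argument is correct, and its core idea is the same as the paper's: once Proposition \ref{vNC} gives the coproduct-intertwining morphism $\pi_r=\pi|_{\C_0(\GG)}\in\Mor(\C_0(\GG),\C_0(\HH))$, the intertwining of the unitary antipodes and scaling groups is automatic by the general functoriality of that structure, and the identities pass to $\Linf(\GG)$ by normality and $\sigma$-weak density of $\C_0(\GG)$. The difference is where you invoke the functoriality: you lift $\pi_r$ to the universal level, cite the universal intertwining from \cite{Johanuniv}/\cite{SLW12}, and then descend through the reducing morphisms using $R^{\GG}\circ\Lambda^{\GG}=\Lambda^{\GG}\circ R^{\GG,\uu}$ and $\pi_r\circ\Lambda^{\GG}=\Lambda^{\HH}\circ\pi^{\uu}$, whereas the paper applies \cite[Proposition 5.45]{KV} directly to $\pi_r$ at the reduced \cst-level, which states exactly that a non-degenerate $*$-homomorphism intertwining the reduced coproducts intertwines $R$ and $\tau_t$ (and $S$). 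So your universal detour is sound but redundant: it buys nothing beyond what the reduced-level result already gives, at the cost of an extra lift-and-descend step and the (correct but additional) compatibility facts for $\Lambda$; conversely, your version does record explicitly the normality/density argument extending the identities from $\C_0(\GG)$ to $\Linf(\GG)$, which the paper leaves implicit.
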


\begin{proof}
Both equalities follow from Proposition \ref{vNC} and \cite[Proposition 5.45]{KV}.
\end{proof}

The following lemma will be further strengthened (under certain technical assumptions) in Section \ref{Sec:BKLS}.

\begin{lemma}\label{emded}
Suppose $\pi$ identifies $\HH$ with an open subgroup of $\GG$. Consider the (universal) $\C^*$-version $\pi_u\in\Mor(\C_0^u(\GG), \C_0^u(\HH))$ and let
$\hat\pi\in\Mor(\C_0^u(\hat\HH),\C_0^u(\hat\GG))$ be the dual morphism.
Then $\hat\pi(\C_0^u(\hat\HH))\subseteq\C_0^u(\hat\GG)$.
\end{lemma}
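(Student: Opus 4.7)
The plan is to reduce the statement to an $L^1$-level computation, exploiting the fact that openness (i.e.\ the normality of $\pi$) supplies a pre-adjoint map on the preduals that is absent for general closed quantum subgroups.

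Recall from \cite{Johanuniv} that, using the half-universal multiplicative unitary $\Ww^\GG = (\id \tens \Lambda^\GG)(\WW^\GG) \in \M(\C_0^\uu(\hh\GG) \tens \C_0(\GG))$, one obtains a norm-decreasing linear map
\[\lambda_\GG\colon \Lone(\GG) \to \C_0^\uu(\hh\GG), \qquad \lambda_\GG(\omega) = (\id \tens \omega)(\Ww^\GG),\]
whose image is norm-dense in $\C_0^\uu(\hh\GG)$; analogously we have $\lambda_\HH$. On the other hand, normality of $\pi$ allows one to form the contractive pre-adjoint $\pi_*\colon \Lone(\HH) \to \Lone(\GG)$, $\pi_*(\omega) := \omega \comp \pi$ (in fact isometric, by surjectivity of $\pi$).

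The main step is to verify the identity $\hh\pi \comp \lambda_\HH = \lambda_\GG \comp \pi_*$. Let $\pi_u \in \Mor(\C_0^\uu(\GG), \C_0^\uu(\HH))$ be the universal lift of the reduced morphism obtained via Proposition \ref{vNC}, so that $\Lambda^\HH \comp \pi_u = \pi \comp \Lambda^\GG$. The defining intertwining relation for the dual morphism reads
\[(\hh\pi \tens \id)(\WW^\HH) = (\id \tens \pi_u)(\WW^\GG),\]
and applying $\id \tens \Lambda^\HH$ to both sides gives
\[(\hh\pi \tens \id)(\Ww^\HH) = (\id \tens \pi)(\Ww^\GG).\]
Slicing the second leg against $\omega \in \Lone(\HH)$ now yields
\[\hh\pi(\lambda_\HH(\omega)) = (\id \tens \omega)\bigl((\id \tens \pi)(\Ww^\GG)\bigr) = (\id \tens \omega \comp \pi)(\Ww^\GG) = \lambda_\GG(\pi_*(\omega)) \in \C_0^\uu(\hh\GG).\]

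Since $\lambda_\HH(\Lone(\HH))$ is norm-dense in $\C_0^\uu(\hh\HH)$ and $\hh\pi$ is norm-continuous, the inclusion $\hh\pi(\C_0^\uu(\hh\HH)) \subseteq \C_0^\uu(\hh\GG)$ follows immediately. The only delicate point is bookkeeping: one must be careful about which form of the multiplicative unitary (universal, half-reduced, reduced) is used, so that the slices automatically land in $\C_0^\uu(\hh\GG)$ itself and not merely in its multiplier algebra. The essential new input, that openness provides a pre-adjoint $\pi_*$ at the $L^1$ level, is exactly what fails for a general closed (non-open) quantum subgroup, which accounts for why this reasoning is specific to the open case.
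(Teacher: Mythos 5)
Your proposal is correct and follows essentially the same route as the paper: both rely on the pre-adjoint $\pi_*:\Lone(\HH)\to\Lone(\GG)$ supplied by normality of $\pi$, the identity $\hh\pi\bigl((\id\otimes\omega)\Ww^\HH\bigr)=(\id\otimes\pi_*(\omega))\Ww^\GG$, and the norm-density of $\{(\id\otimes\omega)\Ww^\HH:\omega\in\Lone(\HH)\}$ in $\C_0^u(\hHH)$. The only difference is that you spell out the derivation of that identity from $(\hh\pi\otimes\id)(\WW^\HH)=(\id\otimes\pi_u)(\WW^\GG)$ by applying $\id\otimes\Lambda^\HH$, a step the paper leaves implicit.
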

\begin{proof}
Denote by $\pi_*:\Lone(\HH)\rightarrow\Lone(\GG)$ the preadjoint of $\pi$. Then
\[
\hat\pi((\id\otimes\omega) \Ww^\HH) = (\id\otimes\pi_*(\omega)) \Ww^\GG \in \C_0^u(\hat\GG)
\]
for all $\omega\in\Lone(\HH)$. Since
\[
\{(\id\otimes\omega) \Ww^\HH : \omega\in\Lone(\HH)\}^{-\|\cdot\|} = \C_0^u(\hat\HH)
\]
the assertion follows.
\end{proof}

In the classical setting the fact that an open subgroup is also closed provides a well-behaved projection, namely the characteristic
function of the subgroup. This plays an essential role in the analysis of the subgroup and its properties.

In our context it does not follow immediately from the definition that open quantum subgroups are also closed (see Section \ref{open->closed}), but we still obtain a well-behaved projection which will be useful in the analysis to follow.

\begin{lemma}\label{charac-func}
Suppose $\pi$ identifies $\HH$ with an open subgroup of $\GG$. Let $P\in Z(\Linf(\GG))$ be the central support of $\pi$. Then
\begin{enumerate}
 \item $\Delta^{\GG}(P)(P\otimes P) = P\otimes P$;
 \item $R^{\GG}(P) = P$;
 \item $\tau^{\GG}_t(P) = P$.
\end{enumerate}
\end{lemma}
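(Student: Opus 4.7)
The plan rests on the structural observation that since $\pi$ is a normal surjective unital $^*$-homomorphism with central support $P$, the kernel of $\pi$ is $(\I-P)\Linf(\GG)$ and $\pi$ restricts to a $^*$-isomorphism of $P\Linf(\GG)$ onto $\Linf(\HH)$. In particular $\pi(P)=\I$, and for any projection $q\in\Linf(\GG)$, the equation $\pi(q)=\I$ forces $qP=P$ (by injectivity of $\pi|_{P\Linf(\GG)}$ applied to $qP\in P\Linf(\GG)$), hence $P\le q$. Exactly the same reasoning applies to $\pi\tens\pi$, whose kernel equals $(\I\tens\I-P\tens P)(\Linf(\GG)\wot\Linf(\GG))$ since $\pi\tens\pi$ restricts to a $^*$-isomorphism of $P\Linf(\GG)\wot P\Linf(\GG)$ onto $\Linf(\HH)\wot\Linf(\HH)$; consequently any projection $A\in\Linf(\GG)\wot\Linf(\GG)$ with $(\pi\tens\pi)(A)=\I\tens\I$ satisfies $A\ge P\tens P$.

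For (1), apply the intertwining property from Definition~\ref{defm} to $P$:
\[
(\pi\tens\pi)\bigl(\Delta^{\GG}(P)\bigr)\;=\;\Delta^{\HH}(\pi(P))\;=\;\Delta^{\HH}(\I)\;=\;\I\tens\I.
\]
Since $\Delta^{\GG}(P)$ is a projection, the preceding observation gives $\Delta^{\GG}(P)\ge P\tens P$, which is precisely $\Delta^{\GG}(P)(P\tens P)=P\tens P$.

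Parts (2) and (3) proceed in the same manner, using Corollary~\ref{antipodes-same}. For (2), note that $R^{\GG}$ is a $^*$-anti-automorphism, so it preserves the centre and positivity (and hence the order), so $R^{\GG}(P)$ is a central projection. Evaluating $\pi\comp R^{\GG}=R^{\HH}\comp\pi$ at $P$ yields $\pi(R^{\GG}(P))=R^{\HH}(\I)=\I$, hence $R^{\GG}(P)\ge P$. Applying $R^{\GG}$ to this inequality and using $(R^{\GG})^2=\id$ gives the reverse inequality, whence $R^{\GG}(P)=P$. For (3) the argument is identical, using $\pi\comp\tau^{\GG}_t=\tau^{\HH}_t\comp\pi$ at both parameters $t$ and $-t$ and the fact that $\tau^{\GG}_t$ is a $^*$-automorphism that preserves the centre and the order.

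No serious obstacle is anticipated. The only point of care is that the elements $\Delta^{\GG}(P),R^{\GG}(P),\tau^{\GG}_t(P)$ be genuine projections, which is automatic, and that the central support of $\pi\tens\pi$ be $P\tens P$, which is a standard consequence of the fact that a normal $^*$-isomorphism tensors with itself to give a normal $^*$-isomorphism of the respective von Neumann tensor products.
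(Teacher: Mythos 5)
Your proposal is correct and takes essentially the same approach as the paper: both arguments rest on the injectivity of $\pi$ on the central corner $P\Linf(\GG)$ (the paper via the explicit inverse isomorphism $\iota(\pi(x))=Px$, you via the equivalent order-theoretic statement that $\pi(q)=\I$ forces $P\le q$, together with its tensor-square analogue for $\pi\ot\pi$), and both upgrade the resulting relations to the stated equalities by applying $R^{\GG}$, respectively $\tau^{\GG}_{-t}$, exactly as in the paper. The tensor-square injectivity you invoke is the same standard fact the paper uses implicitly when applying $\iota\ot\iota$ to the intertwining relation.
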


\begin{proof}
Let $\iota:\Linf(\HH) \to P \Linf(\GG)$ be an isomorphism given by the formula
\[ \iota (\pi(x)) = Px, \;\;\; x \in \Linf(\GG).\]
For any $x \in \Linf(\GG)$
\[ (\iota \ot \iota) \left( \Delta_{\HH}(\pi(x)) \right) =  (\iota \ot \iota) \left( (\pi \ot \pi)(\Delta_{\GG} (x)) \right) = (P \ot P) \Delta_{\GG} (x).\]
We obviously have $P= \iota(\I)$, so further
\[ (P \ot P) (\Delta_{\HH} (P)) = (\iota \ot \iota) \left( \Delta_{\HH}(\pi(\I)) \right) = P \ot P.\]
Using Corollary \ref{antipodes-same}, as $\pi\circ R^\GG=R^\HH\circ \pi$,
we get
\[ P R^{\GG} (P) = \iota (\pi (R^\GG(P))) =  \iota(R^{\HH} (\pi(P)) = \iota (R^{\HH}(\I))= \iota (\I) = P.\]
Applying $R^\GG$ to the above identity we obtain
\[
R^\GG(P) = P R^\GG(P),
\]
and (2) follows.

To obtain (3), again using Corollary \ref{antipodes-same} we first observe that
\[
P = \tau_t^\HH(\pi(P)) = \pi (\tau_t^\GG(P)) = P \tau_t^\GG(P)
\]
for all $t\in\mathbb R$, and then apply $\tau_{-t}^\GG$ to this equality to obtain
\[
\tau_{-t}^\GG(P) = P \tau_{-t}^\GG(P)
\]
for all $t\in\mathbb R$, which implies (3).
\end{proof}

From now on we shall simply write $\mathds{1}_\HH$ for the projection $P$ of the above proposition (calling it the support of $\HH$), and
freely use the identification $\Linf(\HH)\cong \mathds{1}_\HH\Linf(\GG)$ whenever convenient, not mentioning explicitly the identifying isomorphism $\iota$. In the next lemma we note that this picture is also compatible with the respective coproducts.

\begin{proposition}\label{delta(P)}

Let $\GG$ be a locally compact quantum group and $\HH$ an open quantum subgroup of $\GG$. Then
\[
\Delta^{\GG}(\mathds{1}_\HH)(\I\otimes \mathds{1}_\HH) = \mathds{1}_\HH\otimes \mathds{1}_\HH=\Delta^{\GG}(\mathds{1}_\HH)(\mathds{1}_\HH\otimes \I) .
\]
\end{proposition}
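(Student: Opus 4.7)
The key input is Lemma~\ref{charac-func}: the central projection $P:=\mathds{1}_{\HH}\in Z(\Linf(\GG))$ satisfies $R^{\GG}(P)=P$, $\tau_t^{\GG}(P)=P$, and $\Delta^{\GG}(P)(P\otimes P)=P\otimes P$. Centrality of $P$ ensures that $P\otimes\I$ and $\I\otimes P$ are central in $\Linf(\GG)\wot\Linf(\GG)$, hence commute with the projection $\Delta^{\GG}(P)$. Consequently, both $\Delta^{\GG}(P)(\I\otimes P)$ and $\Delta^{\GG}(P)(P\otimes\I)$ are projections, each dominating $P\otimes P$; the aim is to show each coincides with $P\otimes P$.

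\textbf{Symmetry reduction.} From $(R^{\GG}\otimes R^{\GG})\comp\Delta^{\GG}=\sigma\comp\Delta^{\GG}\comp R^{\GG}$ together with $R^{\GG}(P)=P$, one has $(R^{\GG}\otimes R^{\GG})\Delta^{\GG}(P)=\sigma\Delta^{\GG}(P)$, so the anti-automorphism $\sigma\comp(R^{\GG}\otimes R^{\GG})$ fixes $\Delta^{\GG}(P)$ while swapping $P\otimes\I$ with $\I\otimes P$. Applying it interchanges the two asserted identities, so it suffices to prove $\Delta^{\GG}(P)(\I\otimes P)=P\otimes P$, equivalently the vanishing of the projection $(P^{\perp}\otimes P)\Delta^{\GG}(P)$.

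\textbf{Computation via $W^{\GG}$.} I would attack this using the right multiplicative unitary $W^{\GG}\in\Linf(\hGG)\wot\Linf(\GG)$. Centrality of $P$ in $\Linf(\GG)$ yields $[W^{\GG},\I\otimes P]=0$, and combining this with $\Delta^{\GG}(P)=W^{\GG}(P\otimes\I)(W^{\GG})^{*}$ gives
\[
\Delta^{\GG}(P)(\I\otimes P)=W^{\GG}(P\otimes\I)(W^{\GG})^{*}(\I\otimes P)=W^{\GG}(P\otimes P)(W^{\GG})^{*}.
\]
Thus the desired identity reduces to $W^{\GG}(P\otimes P)=(P\otimes P)W^{\GG}$, and — since $W^{\GG}$ already commutes with $\I\otimes P$ — to the restricted equality $(P\otimes\I)W^{\GG}(\I\otimes P)=W^{\GG}(P\otimes\I)(\I\otimes P)$.

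\textbf{Main obstacle.} The last commutation is the nontrivial content: $W^{\GG}$ need not commute with $P\otimes\I$ (that would require $P\in\Linf(\hGG)'$), so the cancellation must occur strictly on the range of $\I\otimes P$. I would obtain it by bringing in the bicharacter $V_\pi:=(\id\otimes\pi)(W^{\GG})\in\Linf(\hGG)\wot\Linf(\HH)$, which under the identification $\Linf(\HH)\cong P\Linf(\GG)$ coincides with $(\I\otimes P)W^{\GG}=W^{\GG}(\I\otimes P)$, and exploiting its pentagonal-type identities $(\id\otimes\Delta^{\HH})V_\pi=V_{\pi,12}V_{\pi,13}$ and $(\Delta^{\hGG}\otimes\id)V_\pi=V_{\pi,23}V_{\pi,13}$, which encode the intertwining $(\pi\otimes\pi)\Delta^{\GG}=\Delta^{\HH}\comp\pi$. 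Together with Lemma~\ref{emded} (ensuring that the associated objects live at the $\C_0^{\uu}$-level rather than merely in multiplier algebras), these relations should supply the missing restricted commutation via a slice argument in the first (dual) leg. Plugging back gives $\Delta^{\GG}(P)(\I\otimes P)=P\otimes P$, and the symmetry step delivers $\Delta^{\GG}(P)(P\otimes\I)=P\otimes P$, completing the proof.
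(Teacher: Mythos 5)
Your first two steps are correct and in fact coincide with the skeleton of the paper's proof: the symmetry reduction via $R^{\GG}(\I_\HH)=\I_\HH$ and $(R^{\GG}\ot R^{\GG})\comp\Delta^{\GG}=\sigma\comp\Delta^{\GG}\comp R^{\GG}$, and the observation that, since $\I\ot \I_\HH$ commutes with $W^{\GG}$, the identity $\Delta^{\GG}(\I_\HH)(\I\ot\I_\HH)=\I_\HH\ot\I_\HH$ is equivalent to the commutation $W^{\GG}(\I_\HH\ot\I_\HH)=(\I_\HH\ot\I_\HH)W^{\GG}$. But that commutation is precisely the substance of the proposition, and at the moment you reach it you stop proving and start hoping: ``these relations should supply the missing restricted commutation via a slice argument in the first (dual) leg'' is not an argument, and no such slice computation is exhibited. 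Worse, the route you sketch is in danger of circularity. The statement that the slices $(\id\ot\omega)\bigl((\id\ot\pi)\ww^{\GG}\bigr)$ commute with $\I_\HH$ is exactly the inclusion $\sM_1\subseteq\sM_2$ of Lemma \ref{M_1=M_2}, which the paper obtains later from coduality applied to the coideal $\sZ$ together with the fact $\I_\HH\in\sZ$ --- and ``$\I_\HH\in\sZ$'' is nothing other than Proposition \ref{delta(P)} itself. Likewise, your identification of the $\HH$-leg of the bicharacter with the compression by $\I_\HH$ (so that $\Delta^{\HH}$, or $\ww^{\HH}$, acts on $\I_\HH\Ltwo(\GG)$) is only justified via Theorem \ref{lemh} and Corollary \ref{cor:unitaryHG}, which again rest on the present proposition; and Lemma \ref{emded}, being a statement about the universal dual morphism, contributes nothing towards the needed commutation.

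For comparison, the paper closes exactly this gap by a short modular-theoretic trick which uses only what is available up to Lemma \ref{charac-func}. Part (1) of that lemma gives the one-sided inequality $\I_\HH\ot\I_\HH\le W^{\GG}(\I_\HH\ot\I_\HH)W^{\GG*}$ (your ``dominating'' remark, read through $\Delta^{\GG}(\I_\HH)=W^{\GG}(\I_\HH\ot\I)W^{\GG*}$). Since $\I_\HH$ is central and $R^{\GG}$-invariant, one has $J^{\psi}\I_\HH J^{\psi}=\I_\HH=J^{\hat\psi}\I_\HH J^{\hat\psi}$, and conjugating the inequality by $J^{\psi}\ot J^{\hat\psi}$, which transforms $W^{\GG}$ into $W^{\GG*}$, yields the reverse inequality $\I_\HH\ot\I_\HH\le W^{\GG*}(\I_\HH\ot\I_\HH)W^{\GG}$; the two together force $W^{\GG}(\I_\HH\ot\I_\HH)W^{\GG*}=\I_\HH\ot\I_\HH$, which is the commutation you reduced to. So your proposal has a genuine gap at its central step; to repair it, replace the ``main obstacle'' paragraph by this (or some other self-contained) proof of the commutation that does not invoke the coideal $\sZ$, Lemma \ref{M_1=M_2}, or the embedding $\Ltwo(\HH)\subset\Ltwo(\GG)$, all of which come downstream of Proposition \ref{delta(P)}.
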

\begin{proof}
First note that it is enough to prove $\Delta^{\GG}(\I_\HH)(\I\otimes \I_\HH) = \I_\HH\otimes \I_\HH$. The second equality follows then from the identity $R^{\GG}(\I_\HH) = \I_\HH$
which was proved in Lemma \ref{charac-func}.

By Lemma \ref{charac-func} we have (writing $\I$ for the unit of $\Linf(\GG)$)
\[\begin{split}
\I_\HH\otimes \I_\HH &= \Delta^{\GG}(\I_\HH)(\I_\HH\otimes \I_\HH)
\\&=
W^\GG(\I_\HH\otimes \I)W^{\GG*}(\I_\HH\otimes \I_\HH)
\\&=
W^{\GG}(\I_\HH\otimes \I_\HH)W^{\GG*}(\I_\HH\otimes \I_\HH)
\end{split}\]
which implies
\begin{equation}\label{123}
\I_\HH\otimes \I_\HH \leq W^\GG(\I_\HH\otimes \I_\HH)W^{\GG*} .
\end{equation}

On the other hand, since
$\I_\HH$ is central and $R^\GG(\I_\HH) = \I_\HH$ we have
\[
J^\psi\I_\HH J^\psi = \I_\HH =  J^{\hat\psi} \I_\HH  J^{\hat\psi}.
\]
Therefore, if we multiply by $ J^{\psi}\otimes  J^{\hat\psi}$ to both sides of the inequality \eqref{123} we obtain
\[\begin{split}
\I_\HH\otimes \I_\HH &= ( J^{\psi}\otimes  J^{\hat\psi}) (\I_\HH\otimes \I_\HH) ( J^{\psi}\otimes  J^{\hat\psi})
\\&\leq
( J^{\psi}\otimes  J^{\hat\psi})W^\GG(\I_\HH\otimes \I_\HH)W^{\GG*}( J^{\psi}\otimes  J^{\hat\psi})
\\&=
W^{\GG*}(\I_\HH\otimes \I_\HH)W^\GG .
\end{split}\]
Hence
\[\begin{split}
\I_\HH\otimes \I_\HH &= W^\GG(\I_\HH\otimes \I_\HH)W^{\GG*}
\\&= W^\GG(\I_\HH\otimes \I)W^{\GG*} (\I\otimes \I_\HH)
\\&=
\Delta^\GG(\I_\HH)(\I\otimes \I_\HH),
\end{split}\]
as desired.
\end{proof}

We are now ready to see that if $\HH$ is an open quantum subgroup then the inclusion $\HH \subset \GG$ respects also the Haar weights.

\begin{theorem}\label{lemh}
Let $\GG$ be a locally compact quantum group and $\HH \subset \GG$ an open quantum subgroup.
Then the left and right Haar weights on $\HH$ are the restrictions of the Haar weights of $\GG$ (so that in particular the corresponding modular automorphism groups are also compatible).
Moreover $\delta^\HH = \I_\HH\delta^\GG$.
\end{theorem}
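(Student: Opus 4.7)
The plan is to work through the identification $\iota:\Linf(\HH)\to\I_\HH\Linf(\GG)$ supplied by Lemma~\ref{charac-func} and show that the functional $\tilde\psi:=\psi^\GG\circ\iota$ on $\Linf(\HH)^+$ is a right-invariant n.s.f.\ weight; by uniqueness of the Haar weight (up to scalar), it must then coincide with $\psi^\HH$ after an appropriate normalization. Normality and faithfulness of $\tilde\psi$ are immediate from the injectivity of $\iota$ and faithfulness of $\psi^\GG$. For semifiniteness, if $a\in\mathcal{N}_{\psi^\GG}$ then $(\I_\HH a)^*(\I_\HH a)=\I_\HH a^*a\le a^*a$, so $\I_\HH a\in\mathcal{N}_{\psi^\GG}$, and since multiplication by the central projection $\I_\HH$ is $\sigma$-weakly continuous the set $\I_\HH\mathcal{N}_{\psi^\GG}$ is $\sigma$-weakly dense in $\I_\HH\Linf(\GG)$, which after transport by $\iota^{-1}$ supplies enough square-integrable elements.

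The core step is right-invariance. Transporting $\Delta^\HH$ through $\iota$ and using $(\pi\otimes\pi)\Delta^\GG=\Delta^\HH\circ\pi$ gives $(\iota\otimes\iota)\Delta^\HH(y)=(\I_\HH\otimes\I_\HH)\Delta^\GG(\iota(y))$. Taking the adjoint of the identity $\Delta^\GG(\I_\HH)(\I\otimes\I_\HH)=\I_\HH\otimes\I_\HH$ from Proposition~\ref{delta(P)} produces $(\I\otimes\I_\HH)\Delta^\GG(\I_\HH)=\I_\HH\otimes\I_\HH$, and since any $x\in\I_\HH\Linf(\GG)$ satisfies $\Delta^\GG(x)=\Delta^\GG(\I_\HH)\Delta^\GG(x)$, this yields
\[
(\I_\HH\otimes\I_\HH)\Delta^\GG(x)=(\I\otimes\I_\HH)\Delta^\GG(x).
\]
Now $\I_\HH$ is central in $\Linf(\GG)$, so $\I\otimes\I_\HH$ commutes with $\Delta^\GG(x)$; applying $\psi^\GG\otimes\id$ and invoking right-invariance of $\psi^\GG$ produces
\[
(\psi^\GG\otimes\id)\bigl[(\I_\HH\otimes\I_\HH)\Delta^\GG(x)\bigr]=\psi^\GG(x)\,\I_\HH,
\]
which transported back along $\iota$ gives $(\tilde\psi\otimes\id)\Delta^\HH(y)=\tilde\psi(y)\I_\HH$, as required. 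Uniqueness then fixes $\psi^\HH=\tilde\psi$ after normalization, and for the left Haar weight I would combine $\varphi^\GG=\psi^\GG\circ R^\GG$ (and likewise for $\HH$) with Corollary~\ref{antipodes-same} to conclude $\varphi^\HH=\varphi^\GG\circ\iota$ in matching normalization.

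Compatibility of the modular automorphism groups then follows at once from Takesaki's theorem applied to the $\sigma^{\psi^\GG}$-invariant central projection $\I_\HH$. For the modular element I would substitute $\iota(y)=\I_\HH\iota(y)$ into the Radon--Nikodym relation $\varphi^\GG(\iota(y))=\psi^\GG((\delta^\GG)^{1/2}\iota(y)(\delta^\GG)^{1/2})$, use centrality of $\I_\HH$ to commute it past $\delta^\GG$, and rewrite as $\psi^\HH((\I_\HH\delta^\GG)^{1/2}\,y\,(\I_\HH\delta^\GG)^{1/2})$; uniqueness of the modular element for $\HH$ then forces $\delta^\HH=\I_\HH\delta^\GG$. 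The principal obstacle in this argument is the right-invariance computation in the second paragraph, whose clean form relies essentially on the ``characteristic-function'' identities furnished by Proposition~\ref{delta(P)}, together with the ability to use the centrality of $\I_\HH$ to move it freely past the coproduct.
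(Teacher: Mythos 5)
Your argument is correct and follows essentially the same route as the paper: identify $\Linf(\HH)$ with $\I_\HH\Linf(\GG)$, use the group-like identities of Proposition~\ref{delta(P)} together with centrality of $\I_\HH$ to show the restricted weight is invariant, invoke uniqueness of Haar weights, and obtain $\delta^\HH=\I_\HH\delta^\GG$ from the uniqueness in the Radon--Nikodym relation. The only cosmetic difference is that you verify right invariance of $\psi^\GG|_{\I_\HH\Linf(\GG)}$ and transfer to $\varphi$ via the unitary antipode, whereas the paper checks left invariance directly and notes the right case is analogous.
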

\begin{proof}
First note that since $\I_\HH$ is central, the restriction of $\varphi^\GG$ to  $\Linf(\HH) = \I_\HH \Linf(\GG)$ defines an n.s.f.\  weight on $\Linf(\HH)$.
Moreover, using Proposition \ref{delta(P)} we have for every $x\in \Linf(\HH)_+$

\[\begin{split}
(\id\otimes \varphi^\GG) \Delta^\HH(x)
&=
(\id\otimes \varphi^\GG)(\Delta^\GG(x)(\I_\HH\otimes \I))
=
\I_\HH\varphi^\GG(x)
\end{split}\]
which shows the restriction of $\varphi^\GG$ to $\Linf(\HH)$ is left invariant (with respect to comultiplication of $\HH$).
The case of the right Haar weight is similar.
Hence by the uniqueness of Haar weights the first assertion follows.

Now since $\psi^\GG = \varphi^\GG_{\delta^\GG}$ and $\psi^\HH = \varphi^\HH_{\delta^\HH}$ we conclude that $\delta^\HH = \I_\HH\delta^\GG$ (see \cite[Proposition 5.1]{VJOT}).
\end{proof}

We can now easily deduce the following corollary.

\begin{corollary} \label{cor:unitaryHG}
Let $\GG$ be a locally compact quantum group and $\HH \subset \GG$ an open quantum subgroup. Then we have a natural embedding $\Ltwo(\HH) \subset \Ltwo(\GG)$ and $\ww^{\HH} = (\I_{\HH} \ot \I_{\HH}) \ww^{\GG}|_{\Ltwo(\HH) \ot \Ltwo (\HH)}$.
\end{corollary}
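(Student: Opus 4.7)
\emph{Plan.} My plan is to first identify $\Ltwo(\HH)$ with $\I_\HH\Ltwo(\GG)\subset\Ltwo(\GG)$ via the GNS construction, and then to compare the two multiplicative unitaries on this subspace using their standard GNS characterisations. For the embedding, Theorem \ref{lemh} gives $\psi^\HH = \psi^\GG|_{\I_\HH\Linf(\GG)}$, hence $\mathcal{N}_{\psi^\HH} = \I_\HH\mathcal{N}_{\psi^\GG}$ and $\psi^\HH(x^*x)=\psi^\GG(x^*x)$ for $x\in\mathcal{N}_{\psi^\HH}$. Writing $\Lambda_\psi$ for the GNS map, the assignment $\Lambda_{\psi^\HH}(x)\mapsto\Lambda_{\psi^\GG}(x)$ thus extends to an isometric embedding $\Ltwo(\HH)\hookrightarrow\Ltwo(\GG)$, whose image coincides with $\I_\HH\Ltwo(\GG)$ since $\Lambda_{\psi^\GG}(\I_\HH y)=\I_\HH\Lambda_{\psi^\GG}(y)$. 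From now on I identify $\Ltwo(\HH)$ with this subspace, so $\Lambda_{\psi^\HH}=\Lambda_{\psi^\GG}|_{\mathcal{N}_{\psi^\HH}}$.

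\emph{Key computation.} The right multiplicative unitary $\ww^\GG$ is characterised by the standard GNS formula
\[(\ww^\GG)^*\bigl(\Lambda_{\psi^\GG}(a)\ot\Lambda_{\psi^\GG}(b)\bigr) = (\Lambda_{\psi^\GG}\ot\Lambda_{\psi^\GG})\bigl(\Delta^\GG(b)(a\ot\I)\bigr),\qquad a,b\in\mathcal{N}_{\psi^\GG},\]
and analogously for $\ww^\HH$ with $\I_\HH$ in place of $\I$. For $a,b\in\mathcal{N}_{\psi^\HH}$ I claim the identity
\[\Delta^\GG(b)(a\ot\I) = \Delta^\HH(b)(a\ot\I_\HH)\]
in $\Linf(\GG)\wot\Linf(\GG)$. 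Indeed, centrality of $\I_\HH$ (Lemma \ref{charac-func}) together with $a=\I_\HH a$ gives $\Delta^\GG(b)(a\ot\I) = (\I_\HH\ot\I)\Delta^\GG(b)(a\ot\I)$. Writing $b=\I_\HH b$ and taking adjoints in Proposition \ref{delta(P)} to get $(\I_\HH\ot\I)\Delta^\GG(\I_\HH) = \I_\HH\ot\I_\HH$, one finds $(\I_\HH\ot\I)\Delta^\GG(b) = (\I_\HH\ot\I_\HH)\Delta^\GG(b)$, which is precisely $\Delta^\HH(b)$ under the identification $\Linf(\HH)=\I_\HH\Linf(\GG)$. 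Since $\Delta^\HH(b)\in\Linf(\HH)\wot\Linf(\HH)$ absorbs $(\I\ot\I_\HH)$ on the right, the claim follows.

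\emph{Conclusion.} Applying $\Lambda_{\psi^\GG}\ot\Lambda_{\psi^\GG}$ (which agrees with $\Lambda_{\psi^\HH}\ot\Lambda_{\psi^\HH}$ on the relevant elements) and invoking the two GNS formulas shows that $(\ww^\GG)^*$ and $(\ww^\HH)^*$ agree on the dense subspace $\overline{\Lin}\{\Lambda_{\psi^\HH}(a)\ot\Lambda_{\psi^\HH}(b):a,b\in\mathcal{N}_{\psi^\HH}\}$ of $\Ltwo(\HH)\ot\Ltwo(\HH)$. Because $(\ww^\HH)^*$ is a unitary on $\Ltwo(\HH)\ot\Ltwo(\HH)$, this forces $(\ww^\GG)^*$ to preserve that subspace and to coincide there with $(\ww^\HH)^*$; passing to adjoints yields the analogous statement for $\ww^\HH$ and $\ww^\GG$, and since $(\I_\HH\ot\I_\HH)$ acts as the identity on the invariant subspace $\Ltwo(\HH)\ot\Ltwo(\HH)$ it may be inserted harmlessly, recovering the stated formula. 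The main subtlety lies in the careful bookkeeping between the central projection $\I_\HH$, the two coproducts, and the GNS maps, all of which ultimately rests on Proposition \ref{delta(P)} and Theorem \ref{lemh}.
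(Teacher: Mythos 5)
Your overall strategy -- identify $\Ltwo(\HH)$ with $\I_\HH\Ltwo(\GG)$ using the weight compatibility of Theorem \ref{lemh}, and then compare $\ww^\HH$ and $\ww^\GG$ through their GNS characterisations together with Proposition \ref{delta(P)} -- is exactly the intended ``easy deduction'', and the embedding part of your argument is fine. However, there is a genuine flaw in the key step: the GNS formula you invoke for $\ww^\GG$ is the one adapted to the \emph{left} Haar weight, whereas in this paper $\Ltwo(\GG)$ is the GNS space of the \emph{right} Haar weight $\psi$ and $\ww^\GG$ satisfies $\Delta^\GG(x)=\ww^\GG(x\ot\I)(\ww^\GG)^*$. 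For that convention the correct characterisation is
\[
\ww^\GG\bigl(\Lambda_{\psi^\GG}(a)\ot\Lambda_{\psi^\GG}(b)\bigr)=(\Lambda_{\psi^\GG}\ot\Lambda_{\psi^\GG})\bigl(\Delta^\GG(a)(\I\ot b)\bigr),\qquad a,b\in\mathcal{N}_{\psi^\GG},
\]
whose well-definedness and isometry rest on right invariance, $(\psi^\GG\ot\id)\Delta^\GG=\psi^\GG(\cdot)\I$. With the formula you wrote, the element $\Delta^\GG(b)(a\ot\I)$ need not lie in $\mathcal{N}_{\psi^\GG\ot\psi^\GG}$ and the assignment is not isometric: already for a classical non-unimodular group with right Haar measure, $\iint |a(s)|^2|b(st)|^2\,ds\,dt\neq\psi(a^*a)\psi(b^*b)$ in general. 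Moreover, the unitary your formula would define implements the coproduct as $W^*(x\ot\I)W$ rather than $W(x\ot\I)W^*$, so it is not the paper's $\ww^\GG$. Since your entire ``key computation'' is tailored to that formula, the proof as written does not go through.

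The repair is routine and uses exactly your ingredients in mirror image. For $a,b\in\mathcal{N}_{\psi^\HH}$ one shows
\[
\Delta^\GG(a)(\I\ot b)=(\I_\HH\ot\I_\HH)\Delta^\GG(a)(\I\ot b)=\Delta^\HH(a)(\I\ot b),
\]
using $a=\I_\HH a$, $b=\I_\HH b$, centrality of $\I_\HH$ and the \emph{first} equality of Proposition \ref{delta(P)}, $\Delta^\GG(\I_\HH)(\I\ot\I_\HH)=\I_\HH\ot\I_\HH$ (instead of the second one, which you used). Applying the correct GNS formulas for $\ww^\GG$ and $\ww^\HH$ (the latter with $\psi^\HH=\psi^\GG|_{\Linf(\HH)}$, which is where Theorem \ref{lemh} enters) then gives $\ww^\GG\xi=\ww^\HH\xi$ on the dense span of the vectors $\Lambda_{\psi^\HH}(a)\ot\Lambda_{\psi^\HH}(b)$, and your closing unitarity/density argument, which is correct, yields that $\Ltwo(\HH)\ot\Ltwo(\HH)$ is invariant under $\ww^\GG$ with restriction $\ww^\HH$, hence the stated compression formula.
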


\section{Open quantum subgroups are closed}\label{open->closed}

In this section we prove that an open quantum subgroup in the sense of Definition \ref{defm} is also closed in the sense of Vaes.
In contrast to the classical case this turns to be rather non-trivial.
Our first task is to find a more concrete description of the homogeneous spaces in the case of open quantum subgroups.

Throughout this section $\GG$ is a locally compact quantum group and $\HH$ is an open quantum subgroup of $\GG$ with the corresponding map $\pi: \Linf(\GG) \to \Linf(\HH)$.

Define
\begin{equation}\label{HactG}
\alpha:\Linf(\GG)\ni x\mapsto(\id\otimes\pi) \Delta^\GG(x) \in \Linf(\GG)\bar\otimes\Linf(\HH)
\end{equation}
and let
\[
\sZ = \{x\in\Linf(\GG):\alpha(x) = x\otimes\I\}.
\]
Note that we are using a different notation than the one introduced earlier in Subsection \ref{actions}, as, formally speaking,
we do not know yet whether $\HH$ is a closed subgroup and how the action introduced above is connected to the one discussed in Subsection \ref{actions}.
All this will turn out to be compatible later on.

Observe that $\I_\HH\in \sZ$ and
$\Delta^\GG(x)\in\Linf(\GG)\bar\otimes \sZ$ for all $x\in\sZ$. The restriction of $\Delta^\GG$ to $\sZ$ will be denoted by $\beta$:
\[
\beta:\sZ\rightarrow\Linf(\GG)\bar\otimes \sZ .
\]
Clearly $\beta$ is an ergodic action of $\GG$ on $\sZ$.
Define the unitary $U\in\Linf(\hat\GG)\bar\otimes\Linf(\HH)$ by the formula
\begin{equation}\label{defU}
U = (\id\otimes\pi) \ww^\GG.
\end{equation}
Then
\begin{equation}\label{cod}
x\in\sZ\iff U(x\otimes\I) = (x\otimes\I)U .
\end{equation}

The algebra $\sZ$ satisfies certain natural invariance properties.

\begin{lemma}\label{quot0}
We have (for each $t \in \RR$)
\begin{enumerate}
\item\label{siginv}
$\tau^{\GG}_t(\sZ) = \sZ$,
\item\label{tauinv}
$\sigma_t^{\varphi}(\sZ) = \sZ$.
\end{enumerate}
\end{lemma}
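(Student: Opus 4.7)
The plan is to prove each part by establishing an intertwining of the action $\alpha$ with the one-parameter group in question, and then evaluating on fixed points.

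Part (1) is routine. The standard coproduct relation $\Delta^{\GG}\circ\tau_t^{\GG} = (\tau_t^{\GG}\otimes\tau_t^{\GG})\circ\Delta^{\GG}$ combined with Corollary \ref{antipodes-same} (which gives $\pi\circ\tau_t^{\GG}=\tau_t^{\HH}\circ\pi$) yields immediately
\[\alpha\circ\tau_t^{\GG} = (\tau_t^{\GG}\otimes\tau_t^{\HH})\circ\alpha.\]
Evaluating on $x\in\sZ$ produces $\alpha(\tau_t^{\GG}(x))=\tau_t^{\GG}(x)\otimes\I$, so $\tau_t^{\GG}(x)\in\sZ$; equality follows by running the argument with $-t$.

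Part (2) is where the genuine obstacle lies. The direct analogue fails: from $\Delta^{\GG}\circ\sigma_t^{\varphi} = (\tau_t^{\GG}\otimes\sigma_t^{\varphi})\circ\Delta^{\GG}$ and the compatibility of $\pi$ with the modular groups (which is automatic, as $\mathds{1}_{\HH}$ is central and hence fixed by $\sigma_t^{\varphi^{\GG}}$) one only gets $\alpha(\sigma_t^{\varphi}(x))=\tau_t^{\GG}(x)\otimes\I$ for $x\in\sZ$, and there is no reason why this should equal $\sigma_t^{\varphi}(x)\otimes\I$. My workaround is to route through the right Haar weight $\psi$: the standard dual relation $\Delta^{\GG}\circ\sigma_t^{\psi} = (\sigma_t^{\psi}\otimes\tau_{-t}^{\GG})\circ\Delta^{\GG}$ places the modular group on the \emph{first} leg, and an identical intertwining argument using $\pi\circ\tau_{-t}^{\GG}=\tau_{-t}^{\HH}\circ\pi$ gives
\[\alpha\circ\sigma_t^{\psi} = (\sigma_t^{\psi}\otimes\tau_{-t}^{\HH})\circ\alpha,\]
and hence $\sigma_t^{\psi}(\sZ)=\sZ$.

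To pass from $\sigma_t^{\psi}$ back to $\sigma_t^{\varphi}$ I invoke the Connes cocycle $[D\varphi:D\psi]_t=\delta^{it}$, yielding $\sigma_t^{\varphi} = \textup{Ad}(\delta^{it})\circ\sigma_t^{\psi}$. It therefore suffices to verify that $\delta^{it}$ normalises $\sZ$. For this I would use the standard identity $\Delta^{\GG}(\delta^{it})=\delta^{it}\otimes\delta^{it}$: for $y\in\sZ$,
\[
\alpha(\delta^{it}y\delta^{-it}) = (\delta^{it}\otimes\pi(\delta^{it}))\,\alpha(y)\,(\delta^{-it}\otimes\pi(\delta^{-it})) = \delta^{it}y\delta^{-it}\otimes\I,
\]
where the second-leg factor collapses via the trivial identity $\pi(\delta^{it})\pi(\delta^{-it})=\pi(\I)=\I$. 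This gives $\delta^{it}\sZ\delta^{-it}\subseteq\sZ$, equality by symmetry, and combining $\sigma_t^{\varphi}(\sZ)=\textup{Ad}(\delta^{it})(\sigma_t^{\psi}(\sZ))=\sZ$. The conceptual core of the proof is recognising that the coproduct identity for $\sigma_t^{\varphi}$ is incompatible with the fixed-point condition, and that the detour through $\sigma_t^{\psi}$ followed by conjugation by $\delta^{it}$ is what closes the argument.
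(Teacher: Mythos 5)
Your argument is correct, but for part (2) it follows a genuinely different route than the paper. The paper also starts from $\alpha\circ\sigma^{\varphi}_t=(\tau^{\GG}_t\otimes\sigma^{\varphi^{\HH}}_t)\circ\alpha$, but instead of treating the mismatch as a dead end it rewrites the resulting identity $\alpha(\sigma^{\varphi}_t(x))=\tau^{\GG}_t(x)\otimes\I$ as $U(\sigma^{\varphi}_t(x)\otimes\I)=(\tau^{\GG}_t(x)\otimes\I)U$ and slices the unitary $U=(\id\otimes\pi)\ww^{\GG}$; since $\I$ lies in the von Neumann algebra generated by these slices, this forces $\sigma^{\varphi}_t(x)=\tau^{\GG}_t(x)$ for all $x\in\sZ$ (equation \eqref{mods-coincide}), so (2) reduces to (1). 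Your detour — invariance under $\sigma^{\psi}_t$ via the relation $\Delta^{\GG}\circ\sigma^{\psi}_t=(\sigma^{\psi}_t\otimes\tau^{\GG}_{-t})\circ\Delta^{\GG}$, followed by $\sigma^{\varphi}_t=\mathrm{Ad}(\delta^{\pm it})\circ\sigma^{\psi}_t$ and the check that $\mathrm{Ad}(\delta^{it})$ preserves $\sZ$ using $\Delta^{\GG}(\delta^{it})=\delta^{it}\otimes\delta^{it}$ — is valid and arguably more elementary (it avoids the slice/commutant trick), and it is insensitive to the sign/scalar ambiguity in the cocycle, so your slightly imprecise claim $[D\varphi:D\psi]_t=\delta^{it}$ (the scaling-constant factor is missing, but it is a scalar and drops out of the conjugation) is harmless. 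The trade-off is that your proof yields strictly less than the paper's: the coincidence $\sigma^{\varphi}_t|_{\sZ}=\tau^{\GG}_t|_{\sZ}$ is what the paper silently invokes later, in the proof of Theorem \ref{quot}, to get $\sZ\cap\D(\tau^{\GG}_{-i/2})=\sZ\cap\D(\sigma^{\varphi}_{-i/2})$; invariance of $\sZ$ under the two groups alone does not give that equality of analytic domains, so your proof would not be a drop-in replacement there, even though it fully proves the lemma as stated.
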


\begin{proof}
Using Corollary \ref{antipodes-same}, we obtain
\[
\alpha\circ\tau^\GG_t = (\id\otimes\pi) \Delta^\GG\circ \tau^\GG_t
= (\id\otimes\pi) (\tau^\GG_t\otimes \tau^\GG_t) \Delta^\GG
= (\tau^\GG_t\otimes \tau^\HH_t) (\id\otimes\pi) \Delta^\GG
= (\tau_t^\GG\otimes\tau_t^\HH)\circ\alpha .
\]
Hence, for $x\in \sZ$ it follows that
\[
\alpha(\tau^\GG_t (x)) = (\tau_t^\GG\otimes\tau_t^\HH)\alpha (x) = \tau_t^\GG(x) \otimes\I,
\]
which yields (1).

For (2), first note that Theorem \ref{lemh} implies $\sigma^{\varphi^{\HH}}_t\circ\pi = \pi\circ\sigma^{\varphi}_t$. So, applying \eqref{delta(sigma)} we obtain
\[
\alpha\circ\sigma^{\varphi}_t = (\tau_t^\GG\otimes \sigma^{\varphi^{\HH}}_{t})\circ\alpha .
\]
Hence, for $x\in\sZ$ we get
\[
\alpha(\sigma^{\varphi}_t(x)) = \tau^\GG_t(x)\otimes \I,
\]
or equivalently (see \eqref{defU})
\[
U(\sigma^{\varphi}_t(x)\otimes \I) = (\tau^\GG_t(x)\otimes \I)U,
\]
which gives
\[
((\id\otimes\omega)U) \sigma^{\varphi}_t(x) = \tau^\GG_t(x) ((\id\otimes\omega)U)
\]
for all $\omega\in\B(\Ltwo(\HH))_*$. But the fact that
\[
\I\in\{(\id\otimes\omega)U:\omega\in\B(\Ltwo(\HH))_*\}''
\]
implies
\begin{equation}\label{mods-coincide}
\sigma^{\varphi}_t(x) =\tau^\GG_t(x) .
\end{equation}
Therefore (2) follows from (1).
\end{proof}

Consider the case of a locally compact group $G$ and an open subgroup $H\leq G$. Then it is easy to see that the action $\alpha$ describes the standard action of $G$ on $H$ and we have
\[
\ell^\infty(G/H) = \{t\delta_H : t\in G\}'',
\]
where $\delta_H$ is the Dirac function at point $[H]\in G/H$, and $t\delta_H$ denotes the canonical action of the element $t\in G$ on the function $\delta_H\in L^\infty(G)$.
Our next theorem leads to the quantum version of the latter description of the homogeneous space.

But first let us show that $\I_{\HH}$ is in fact the quantum counterpart of $\delta_H$.

 \begin{proposition}\label{1_H:counit}
Let $\HH\subset \GG$ be an open quantum subgroup of a locally compact quantum group  $\GG$  and $\I_{\HH}$ the support of $\HH$.
Then $\I_\HH$ is a minimal central projection of $\sZ$.
Moreover, if $\omega$ is a normal state on $\sZ$ with $\omega(\I_\HH) =1$ then we have $(\eta \otimes \I_\HH \omega) \Delta^\GG(x) = \eta(x)$
for all $x\in \sZ$ and $\eta\in \sZ_*$.

 \end{proposition}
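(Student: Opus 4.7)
The plan is to establish, as a single key identity, that
\[
\Delta^\GG(x)(\I \otimes \I_\HH) = x \otimes \I_\HH \qquad \text{for every } x \in \sZ,
\]
from which all three assertions of the proposition will follow. This identity is the quantum analogue of the classical fact that a function on $G$ that is constant on left cosets of $H$ is recovered by evaluating its second variable at the identity of $H$; the projection $\I_\HH$ plays the role of the Dirac mass at $[H]\in G/H$.

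For the identity itself, I would use the $^*$-isomorphism $\iota:\Linf(\HH) \to \I_\HH \Linf(\GG)$ from Lemma \ref{charac-func}, so that $\id \otimes \pi$ is injective on $\Linf(\GG) \vtens (\I_\HH \Linf(\GG))$, to which both sides of the identity belong. After applying $\id \otimes \pi$, the left-hand side becomes $(\id \otimes \pi)\Delta^\GG(x)\cdot(\I \otimes \pi(\I_\HH)) = \alpha(x) = x \otimes \I$ (using $x \in \sZ$ and $\pi(\I_\HH) = \I$), while the right-hand side equals $x \otimes \pi(\I_\HH) = x \otimes \I$, so the two sides agree. A parallel short calculation will give $\I_\HH \in \sZ$: by Proposition \ref{delta(P)}, $\Delta^\GG(\I_\HH)(\I \otimes \I_\HH) = \I_\HH \otimes \I_\HH$, and applying $\id \otimes \pi$ (while noting that $\pi(\I - \I_\HH) = 0$ kills the complementary piece of $\Delta^\GG(\I_\HH)$) yields $\alpha(\I_\HH) = \I_\HH \otimes \I$. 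Centrality of $\I_\HH$ in $\sZ$ is inherited from centrality in $\Linf(\GG)$, which is part of Lemma \ref{charac-func}.

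To prove minimality, I would take a projection $q \in \sZ$ with $q \leq \I_\HH$, apply $\pi$ to the first leg of the defining equation $(\id \otimes \pi)\Delta^\GG(q) = q \otimes \I$, and invoke the intertwining property $(\pi \otimes \pi)\Delta^\GG = \Delta^\HH\circ\pi$ to obtain $\Delta^\HH(\pi(q)) = \pi(q) \otimes \I$. The standard fact that the only solutions of $\Delta^\HH(y) = y \otimes \I$ in $\Linf(\HH)$ are scalars (an easy consequence of right invariance of $\psi^\HH$) then forces $\pi(q) \in \{0, \I\}$; since $\pi$ is injective on $\I_\HH \Linf(\GG)$ and $q \leq \I_\HH$, one concludes $q \in \{0, \I_\HH\}$.

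Finally, for the counit statement, let $\omega$ be a normal state on $\sZ$ with $\omega(\I_\HH) = 1$. A one-line Cauchy--Schwarz estimate applied to the projection $\I - \I_\HH$ gives $\omega(y) = \omega(\I_\HH y)$ for every $y \in \sZ$. Reading $\I_\HH \omega$ as the functional $y \mapsto \omega(\I_\HH y)$ and using that $\I \otimes \I_\HH$ commutes with $\Delta^\GG(x)$ (centrality of $\I_\HH$ in $\Linf(\GG)$), the application of $\eta \otimes \omega$ to the key identity produces
\[
(\eta \otimes \I_\HH\omega)\Delta^\GG(x) = (\eta \otimes \omega)\bigl(\Delta^\GG(x)(\I \otimes \I_\HH)\bigr) = \eta(x)\,\omega(\I_\HH) = \eta(x),
\]
which is the desired equation. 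The principal obstacle is spotting and verifying the intermediate identity $\Delta^\GG(x)(\I \otimes \I_\HH) = x \otimes \I_\HH$; once it is in hand, everything else is short and formal.
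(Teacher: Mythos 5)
Your proposal is correct and follows essentially the same route as the paper: the key identity $\Delta^\GG(x)(\I\otimes\I_\HH)=x\otimes\I_\HH$ for $x\in\sZ$, which you verify via injectivity of $\id\otimes\pi$ on $\Linf(\GG)\,\bar{\otimes}\,\I_\HH\Linf(\GG)$, is exactly what the paper uses implicitly through the identification $\Linf(\HH)\cong\I_\HH\Linf(\GG)$, both for the counit computation and to reduce minimality to the relation $\Delta^{\HH}(\pi(q))=\pi(q)\otimes\I$ in $\Linf(\HH)$. Your ``standard fact'' that such elements are scalar is precisely the paper's citation of Lemma 6.4 of Kustermans--Vaes, so the argument matches the published proof in all essentials.
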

\begin{proof}
We know $\I_\HH$ is central. To show minimality, suppose $q\in \sZ$ and $q\leq \I_{\HH}$. Then $q = q \I_{\HH} \in L^\infty(\HH)$,
and since moreover $q\in \sZ$ it follows $\Delta^\HH(q) = (\I\otimes \I_\HH) \Delta^\GG(q) = q \otimes \I_\HH$.
Hence $q = 0$ or $q = \I_\HH$ by \cite[Lemma 6.4]{KV}.

For the last part of the statement, we observe
\[
(\eta \otimes \I_\HH \omega) \Delta^\GG(x) = (\eta \otimes \omega) (\I \otimes \I_\HH) \Delta^\GG(x) = (\eta \otimes \omega) (x \otimes \I_\HH) = \eta(x) .
\]
\end{proof}

We are now ready to provide an alternative description of the algebra $\sZ$.

\begin{theorem}\label{quot}
We have
\[
\sZ  = \overline{\{(\omega\otimes\id) \Delta^\GG(\I_\HH):\omega\in\B(\Ltwo(\GG))_*\}}^{\textrm{weak}}.
\]
\end{theorem}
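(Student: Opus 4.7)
Set $\sN := \overline{\{(\omega\otimes\id)\Delta^\GG(\I_\HH):\omega\in\B(\Ltwo(\GG))_*\}}^{\textrm{weak}}$. The plan is to prove the two inclusions $\sN\subseteq\sZ$ and $\sZ\subseteq\sN$, the reverse inclusion being the main difficulty.

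For $\sN\subseteq\sZ$: since $\pi(\I_\HH)=\I_{\Linf(\HH)}$ (under the identification $\Linf(\HH)\cong\I_\HH\Linf(\GG)$), we have $\alpha(\I_\HH)=(\id\otimes\pi)\Delta^\GG(\I_\HH)=\I_\HH\otimes\I$, placing $\I_\HH$ in $\sZ$. As $\sZ$ is a weakly closed left coideal of $\Linf(\GG)$ (immediate from its definition as the $\alpha$-fixed points), $\Delta^\GG(\I_\HH)\in\Linf(\GG)\vtens\sZ$, so every first-leg slice, and any weak limit of such, belongs to $\sZ$.

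For the reverse inclusion I would first establish that $\I_\HH\in\sN$. Pick a normal state $\omega$ on $\Linf(\GG)$ supported on $\Linf(\HH)\cong\I_\HH\Linf(\GG)$ with $\omega(\I_\HH)=1$, so that $\omega(\I_\HH\,\cdot)=\omega$. Then by Proposition~\ref{delta(P)},
\[
(\omega\otimes\id)\Delta^\GG(\I_\HH) \;=\; (\omega\otimes\id)\bigl[\Delta^\GG(\I_\HH)(\I_\HH\otimes\I)\bigr] \;=\; \omega(\I_\HH)\I_\HH \;=\; \I_\HH.
\]
Next I would verify that $\sN$ is $\Delta^\GG$-invariant. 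By minimal second-leg-slice support of an element of a W*-tensor product, $\Delta^\GG(\I_\HH)\in\Linf(\GG)\vtens\sN$; coassociativity $(\id\otimes\Delta^\GG)\Delta^\GG(\I_\HH)=(\Delta^\GG\otimes\id)\Delta^\GG(\I_\HH)$ then yields, after slicing the first leg by $\omega$, that $\Delta^\GG[(\omega\otimes\id)\Delta^\GG(\I_\HH)]\in\Linf(\GG)\vtens\sN$. Hence $\sN$ is a weakly closed left coideal subspace of $\sZ$ containing $\I_\HH$.

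The main obstacle is to upgrade ``$\sN$ is a coideal subspace containing $\I_\HH$'' to ``$\sN=\sZ$''. I would proceed by invoking Podle\'s density for the $\GG$-action $\beta=\Delta^\GG|_\sZ$, which gives weak*-density of $\{(\omega\otimes\id)\Delta^\GG(z):\omega\in\Lone(\GG),\,z\in\sZ\}$ in $\sZ$, and then reducing each such slice to slices of $\Delta^\GG(\I_\HH)$ by substituting the counit-like identity $(\id\otimes\I_\HH\omega')\Delta^\GG(z)=z$ from Proposition~\ref{1_H:counit} and rearranging via coassociativity. The principal technical difficulty will be executing this reduction cleanly without implicitly assuming that $\sN$ is already an algebra; a likely resolution is to first show that $\sN$ is in fact a $*$-subalgebra by exploiting $\Delta^\GG(\I_\HH)(\I_\HH\otimes\I)=\I_\HH\otimes\I_\HH$ to control products $[(\omega_1\otimes\id)\Delta^\GG(\I_\HH)][(\omega_2\otimes\id)\Delta^\GG(\I_\HH)]$, and then to use minimality of the weakly closed coideal generated by $\I_\HH$ to conclude $\sN=\sZ$.
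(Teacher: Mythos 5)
Your first inclusion, the identity $\I_\HH=(\omega\ot\id)\Delta^\GG(\I_\HH)$ for a state $\omega$ living under $\I_\HH$, and the $\GG$-invariance of $\sN$ via coassociativity are all fine and agree with the paper. The gap is in the final step, which is where the entire difficulty of the theorem sits. The counit-type identity of Proposition \ref{1_H:counit} reads $(\id\ot\I_\HH\omega)\Delta^\GG(z)=z$ for $z\in\sZ$: it recovers $z$ from a \emph{second}-leg slice of $\Delta^\GG(z)$, and no amount of coassociativity will convert it into \emph{first}-leg slices of $\Delta^\GG(\I_\HH)$, because nothing in that manipulation ever brings $\I_\HH$ into the picture in the correct leg. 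More seriously, your closing appeal to ``minimality of the weakly closed coideal generated by $\I_\HH$'' is circular: that generated coideal is, by definition, contained in (indeed equal to) $\sN$ itself, so the claim that it exhausts $\sZ$ is precisely the statement you are trying to prove; there is no independent minimality principle available. Likewise, the suggested route to the algebra property of $\sN$ from $\Delta^\GG(\I_\HH)(\I_\HH\ot\I)=\I_\HH\ot\I_\HH$ does not work: the product of two first-leg slices has its relevant leg only in the weak closure of $\sN\cdot\sN$, which is exactly what is unknown.

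What the paper supplies at this point, and what your outline has no substitute for, is the commutation relation \eqref{charcomm}, namely $\Delta^\GG(\I_\HH)(\I\ot x)=\Delta^\GG(\I_\HH)(R^\GG(x)\ot\I)$ for all $x\in\sZ$. This is proved using strong left invariance of the Haar weight together with the nontrivial fact (from the proof of Lemma \ref{quot0}) that $\sigma^\varphi_t$ and $\tau^\GG_t$ coincide on $\sZ$, plus an analyticity/density argument. It makes $\sN$ a two-sided \emph{ideal} in $\sZ$ (not merely a coideal subspace containing $\I_\HH$), hence $\sN=q\sZ$ for a central projection $q\in\sZ$; the $\GG$-invariance you correctly identified then gives $\Delta^\GG(q)\leq\I\ot q$, and \cite[Lemma 6.4]{KV} forces $q=\I$, i.e.\ $\sN=\sZ$. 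Without the ideal property (or some equally strong absorption statement) a weakly closed coideal subalgebra of $\sZ$ containing $\I_\HH$ could a priori be proper, so your argument cannot close as written.
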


\begin{proof}
For every $\omega\in\B(\Ltwo(\HH))_*$ we have
\begin{align*}
\alpha((\omega\otimes\id)\Delta^\GG(\I_\HH))  &= (\omega\otimes\id\otimes\id)(\id\otimes\alpha)(\Delta^\GG(\I_\HH))= (\omega\otimes\id\otimes\id)(\Delta^\GG\otimes \id)(\alpha(\I_\HH))\\
&= (\omega\otimes\id\otimes\id)(\Delta^\GG\otimes \id)(\I_\HH\otimes\I) = ((\omega\otimes\id) \Delta^\GG(\I_\HH))\otimes \I ,
\end{align*}
which shows $(\omega\otimes\id) \Delta^\GG(\I_\HH)\in\sZ$.

Using the strong invariance of the antipode, for $a, b \in \mathcal{N}_{\varphi}$ and $x\in\sZ\cap\D(\sigma^{\varphi}_{\frac{i}{2}})\cap \D(S^\GG)$
we obtain
\begin{align*}
(\id\otimes\varphi)((\I\otimes a^*)&\Delta^\GG(b)\Delta^\GG(\I_\HH)(S^\GG(x)\otimes\I))
 = S^\GG((\id\otimes\varphi)((x\otimes \I_\HH)\Delta^\GG(   a^*)(\I\otimes \I_\HH b)))\\
& = S^\GG((\id\otimes\varphi)(\Delta^\GG( x a^*)(\I\otimes \I_\HH b)))
 = (\id\otimes\varphi)( (\I\otimes x a^*)\Delta^\GG(b)\Delta^\GG(\I_\HH))
\\ &=  (\id\otimes\varphi)( (\I\otimes  a^*)\Delta^\GG(b)\Delta^\GG(\I_\HH)(\I\otimes \sigma^{\varphi}_{\frac{i}{2}}(x))),
\end{align*}
which implies, by faithfulness of $\varphi$ and `density conditions' for $\GG$,
\begin{equation}\label{charcomm0}
\Delta^\GG(\I_\HH)(S^\GG(x)\otimes\I) = \Delta^\GG(\I_\HH)(\I\otimes \sigma^{\varphi}_{\frac{i}{2}}(x))
\end{equation}
for all $x\in\sZ\cap\D(\sigma^{\varphi}_{\frac{i}{2}})\cap \D(S^\GG)$.
But $\D(S^\GG) = \D(\tau^{\GG}_{\frac{-i}{2}})$, and also by Lemma \ref{quot0} we have $\sZ\cap\D(\tau^{\GG}_{\frac{-i}{2}}) = \sZ\cap\D(\sigma^{\varphi}_{\frac{-i}{2}})$ .
Thus, by replacing $x$ with $\sigma^{\GG}_{-\frac{i}{2}}(x)$ in \eqref{charcomm0} we obtain
\begin{equation}\label{charcomm}
\Delta^\GG(\I_\HH)(\I\otimes x) = \Delta^\GG(\I_\HH)(R^\GG(x)\otimes\I)
\end{equation}
for all $x\in\sZ\cap\D(\sigma^{\varphi}_{\frac{i}{2}})\cap \D(S^\GG)$.

It follows from Theorem \ref{lemh} that $\sZ\cap\D(\sigma^{\GG}_{\frac{i}{2}})\cap \D(\sigma^{\GG}_{\frac{-i}{2}})$
is weak$^*$ dense in $\sZ$.
Hence, we conclude that \eqref{charcomm} holds for all $x\in\sZ$.
In particular, for $\omega\in\B(\Ltwo(\HH))_*$ and $x\in\sZ$ we have
\[\begin{split}
\big((\omega\otimes\id)\Delta^\GG(\I_\HH)\big) x &= (\omega\otimes\id) (\Delta^\GG(\I_\HH) (\I\otimes x)) =
(\omega\otimes\id) (\Delta^\GG(\I_\HH)(R^\GG(x)\otimes\I))
\\&= (R^\GG(x)\omega \otimes\id) \Delta^\GG(\I_\HH),
\end{split}\]
which implies that the von Neumann subalgebra
\[
\sN := \overline{\{(\omega\otimes\id) \Delta^\GG(\I_\HH):\omega\in\B(\Ltwo(\GG))_*\}}^{\textrm{weak}} \subseteq \sZ
\]
is in fact a (two-sided) ideal in $\sZ$.

We next show $\sN$ is $\GG$-invariant (i.e.\ $\Delta(\sN) \subset \Linf(\GG) \wot \sN$).
For this, let $x,y\in\Ltwo(\GG)$ and let $(e_i)_{i \in \Jnd}$ be an orthonormal basis of $\Ltwo(\GG)$. Then
\[\begin{split}
\Delta^\GG((\omega_{x,y}\otimes\id)(\Delta^\GG(\I_\HH))) &= (\omega_{x,y}\otimes\id\otimes\id)((\Delta^\GG\otimes\id)\circ\Delta^\GG(\I_\HH))\\
&= (\omega_{x,y}\otimes\id\otimes\id)(W_{12}W_{13}(\I_\HH\otimes\I\otimes\I)W_{13}^*W_{12}^*)\\
&= \sum_{i,j \in \Jnd}(\omega_{x,e_i}\otimes\id)(W)(\omega_{e_j,y}\otimes\id)(W^*)\otimes (\omega_{e_i,e_j}\otimes\id)(\Delta^\GG(\I_\HH)),
\end{split}\]
which implies $\sN$ is $\GG$-invariant.

Since $\sN$ is an ideal in $\sZ$, there is a central projection $q\in \sZ$ such that $\sN = q \sZ$. The invariance of $\sN$ implies $\Delta^\GG(q) \leq \I\otimes q$.
Then it follows from \cite[Lemma 6.4]{KV} that $q=\I$, hence $\sN = \sZ$.
\end{proof}

In the next lemma we identify the co-dual coideal in $\hGG$ related to $\sZ$.

\begin{lemma}\label{M_1=M_2}
We have
\[
\{(\id\otimes\omega) (\id\otimes\pi) \ww^\GG :\omega\in\B(\Ltwo(\HH))_*\}'' = \{x\in\Linf(\hat\GG): x\I_\HH = \I_\HH x\} .
\]

\end{lemma}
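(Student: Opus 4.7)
My plan is to prove both inclusions separately; denote the left-hand side by $\sN$ and the right-hand side by $\sM_1 := \{x \in \Linf(\hGG) : x \I_\HH = \I_\HH x\}$.

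For $\sN \subseteq \sM_1$, I would invoke Proposition \ref{1_H:counit}, which gives $\I_\HH \in \sZ$, together with the characterization \eqref{cod} to deduce that $U(\I_\HH \otimes \I) = (\I_\HH \otimes \I) U$ in $B(\Ltwo(\GG) \otimes \Ltwo(\HH))$. Slicing both sides by $\id \otimes \omega$ for $\omega \in \B(\Ltwo(\HH))_*$ and using the standard identities $(\id \otimes \omega)((A \otimes \I)Y) = A \cdot (\id \otimes \omega)(Y)$ and $(\id \otimes \omega)(Y(A \otimes \I)) = (\id \otimes \omega)(Y) \cdot A$, one concludes that $(\id \otimes \omega)(U)$ commutes with $\I_\HH$ for every $\omega$. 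Passing to the weak-$*$ closure then gives the inclusion.

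For $\sM_1 \subseteq \sN$, the key starting observation is that $\I_\HH$ is central in $\Linf(\GG)$ by Lemma \ref{charac-func}, so $\I \otimes \I_\HH$ is central in $\Linf(\hGG) \vtens \Linf(\GG)$ and in particular commutes with $\ww^\GG$. Combined with the factorization of $\pi$ through the corner $\I_\HH \Linf(\GG) \cong \Linf(\HH)$, this yields the reformulation
\[
\sN = \overline{\{(\id \otimes \eta)(\ww^\GG) : \eta \in \Lone(\GG),\ \eta(y) = \eta(\I_\HH y)\ \forall y \in \Linf(\GG)\}}^{\mathrm{weak}^*},
\]
where the admissible slicing functionals are exactly those factoring through $\Linf(\HH)$. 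I then pass to commutants: the standard identity $(A \cap B)' = A' \vee B'$ for von Neumann algebras gives $\sM_1' = \Linf(\hGG)' \vee \{\I_\HH\}''$, reducing the target to $\sN' \subseteq \Linf(\hGG)' \vee \{\I_\HH\}''$. Using $[\ww^\GG, \I \otimes \I_\HH] = 0$, the condition $T \in \sN'$ rewrites as $[T \otimes \I, \ww^\GG](\I \otimes \I_\HH) = 0$; in words, the commutator of $T \otimes \I$ with $\ww^\GG$ must vanish on the $\ww^\GG$-invariant subspace $\Ltwo(\GG) \otimes \Ltwo(\HH)$.

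The main obstacle will be this last step: extracting from the partial-commutation condition $[T \otimes \I, \ww^\GG](\I \otimes \I_\HH) = 0$ that $T$ decomposes into a weak-$*$ combination of elements of $\Linf(\hGG)'$ and multiples of $\I_\HH$. I expect this to be handled by applying the pentagon identity $\ww^\GG_{12} \ww^\GG_{13} \ww^\GG_{23} = \ww^\GG_{23} \ww^\GG_{12}$ on a threefold tensor product and exploiting the group-like identities $\Delta(\I_\HH)(\I \otimes \I_\HH) = \I_\HH \otimes \I_\HH = \Delta(\I_\HH)(\I_\HH \otimes \I)$ from Proposition \ref{delta(P)}, which together should propagate the vanishing of the commutator across the different legs of $\ww^\GG$ and force the desired decomposition of $T$.
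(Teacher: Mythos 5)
Your easy inclusion is correct: $\I_\HH\in\sZ$ together with \eqref{cod} gives $U(\I_\HH\ot\I)=(\I_\HH\ot\I)U$, and slicing yields that every $(\id\ot\omega)(\id\ot\pi)\ww^\GG$ commutes with $\I_\HH$; this is in substance the same content as the paper's first inclusion, which obtains it from $\I_\HH\in\sZ$ and the coduality identity \eqref{mo1}. Your reduction of the hard inclusion to the commutant statement $\sN'\subseteq\Linf(\hGG)'\vee\{\I_\HH\}''$, and the rewriting of $T\in\sN'$ as $[T\ot\I,\ww^\GG](\I\ot\I_\HH)=0$, are also correct.

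But at that point you have only restated the problem, and the final step --- the only place where the real content of the lemma sits --- is left as a hope that the pentagon equation and the group-like identities of Proposition \ref{delta(P)} will ``force the desired decomposition of $T$''. That is a genuine gap, and those tools are unlikely to suffice: they are purely algebraic identities, whereas the statement needs analytic input from the Haar weights. Concretely, the paper's proof of the inclusion $\{x\in\Linf(\hGG):x\I_\HH=\I_\HH x\}\subseteq\sN$ rests on two ingredients absent from your sketch: (i) the coduality theorem of \cite{KaspSol}, applicable because the slice algebra is a Baaj--Vaes subalgebra (Lemma \ref{lem:slicebicharacter}) and is the codual of $\sZ$ by \eqref{cod}, which identifies it with the relative commutant $\{y\in\Linf(\hGG):yz=zy\ \text{for all}\ z\in\sZ\}$; and (ii) Theorem \ref{quot}, i.e.\ that $\sZ$ is generated by the slices $(\omega\ot\id)\Delta^\GG(\I_\HH)$ --- a result whose proof needs strong left invariance of $\varphi$, a two-sided ideal argument and the ergodicity-type \cite[Lemma 6.4]{KV}, not just the pentagon equation. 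Given these, one takes $x$ commuting with $\I_\HH$, uses the left multiplicative unitary $V^\GG$ (whose second leg lies in $\Linf(\hGG)'$) to get $(\I\ot x)\Delta^\GG(\I_\HH)=\Delta^\GG(\I_\HH)(\I\ot x)$, hence $x$ commutes with every $(\omega\ot\id)\Delta^\GG(\I_\HH)$, hence with all of $\sZ$, hence $x$ lies in the slice algebra. Without a substitute for Theorem \ref{quot} (or for the coduality computation), your condition $[T\ot\I,\ww^\GG](\I\ot\I_\HH)=0$ cannot be upgraded to $T\in\Linf(\hGG)'\vee\{\I_\HH\}''$. A further, minor, inaccuracy: elements of $\Linf(\hGG)'\vee\{\I_\HH\}''$ are weak$^*$ limits of words in $\Linf(\hGG)'$ and $\I_\HH$ (such as $a\I_\HH b$), not of sums of an element of $\Linf(\hGG)'$ and a multiple of $\I_\HH$, since $\I_\HH$ commutes with $\Linf(\hGG)'$ only when $\I_\HH=\I$.
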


\begin{proof}
Let $\sM_1:= \{(\id\otimes\omega) (\id\otimes\pi) \ww^\GG :\omega\in\B(\Ltwo(\HH))_*\}''$
and $\sM_2:= \{x\in\Linf(\hat\GG): x\I_\HH = \I_\HH x\}$.
Since $(\id\otimes\pi) \ww^\GG$ is a bicharacter, $\sM_1$ satisfies the Baaj-Vaes conditions by Lemma \ref{lem:slicebicharacter}.
Moreover, thanks to \eqref{cod}, $\sM_1$ may be viewed as the codual coideal of $\sZ$ (see Subsection \ref{actions}).
Thus by the results of \cite{KaspSol}
\begin{equation}\label{mo1}
\sM_1 = \{y\in \Linf(\hat\GG):xy = yx \textrm{ for all } y\in\sZ\},
\end{equation}
and since $\I_\HH \in\sZ$, we get $\sM_1\subseteq \sM_2$.

For the reverse inclusion,
note that using the left multiplicative unitary $V^{\GG}$ we have
\[
(\I\otimes x) \Delta^\GG(\I_\HH) = (V^\GG)^* (\I\otimes x\I_\HH) V^\GG = (V^\GG)^* (\I\otimes \I_\HH x) V^\GG =
\Delta^\GG(\I_\HH)(\I\otimes x)
\]
for all $x\in \sM_2$.
Therefore we get
\[
x \,((\omega\otimes\id) \Delta^\GG(\I_\HH))  = ((\omega\otimes\id) \Delta^\GG(\I_\HH))\, x
\]
for all $\omega\in\B(\Ltwo(\HH))_*$. Combining the equality \eqref{mo1} with Theorem \ref{quot} we get $\sM_2\subseteq \sM_1$.
\end{proof}

We need one more technical lemma before proving the main result of this section.
We believe it is well-known to experts, but we include the proof for completeness as we could not find a
reference. 
\begin{lemma}\label{prod}
Let $\GG$ be a locally compact quantum group and $x\in \Linf(\hGG)$, $y\in\Linf(\GG)$. If $xy = 0$ then $x=0$ or $y=0$.
\end{lemma}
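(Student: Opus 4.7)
I will prove the contrapositive. Suppose $y\neq 0$; the goal is to show $x=0$. The first step is to pass to the left support projection $p:=s_l(y)\in\Linf(\GG)$, i.e.\ the projection of $\Ltwo(\GG)$ onto $\overline{y\Ltwo(\GG)}$. This $p$ is a nonzero element of $\Linf(\GG)$, and $xy=0$ immediately gives $xp=0$.

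The heart of the argument uses the left multiplicative unitary $V\in\Linf(\GG)\wot\Linf(\hGG)'$ recalled in the preliminaries, together with the key observation that $\I\otimes x\in\Linf(\GG)'\wot\Linf(\hGG)=(\Linf(\GG)\wot\Linf(\hGG)')'$, so $\I\otimes x$ commutes with $V$. Combined with the identity $\Delta(p)=V^*(\I\otimes p)V$, this yields
\[
(\I\otimes x)\Delta(p)\;=\;V^*(\I\otimes x)(\I\otimes p)V\;=\;V^*(\I\otimes xp)V\;=\;0.
\]
Slicing on the first tensor leg by an arbitrary $\omega\in\Linf(\GG)_*$ then produces
\[
x\cdot(\omega\otimes\id)(\Delta(p))\;=\;0.
\]

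To conclude, I will exhibit a positive scalar multiple of $\I$ in the weak-$*$ closure of $\{(\omega\otimes\id)(\Delta(p)):\omega\in\Linf(\GG)_*\}$. By semifiniteness of the left Haar weight $\varphi$, pick a nonzero subprojection $p_0\le p$ with $\varphi(p_0)<\infty$; since $p_0=pp_0$ we still have $xp_0=0$, so the previous step applies to $p_0$ and gives $x\cdot(\omega\otimes\id)(\Delta(p_0))=0$ for every $\omega\in\Linf(\GG)_*$. The left invariance of $\varphi$ reads $(\varphi\otimes\id)(\Delta(p_0))=\varphi(p_0)\I$, and the standard definition of the slice map for a normal semifinite weight (see Section~1.5 of \cite{KV}) realises this element as the weak-$*$ supremum of $(\omega\otimes\id)(\Delta(p_0))$ taken over positive normal $\omega\in\Linf(\GG)_*$ with $\omega\le\varphi$. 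Weak-$*$ continuity of left multiplication by $x$ then propagates the identity above to $\varphi(p_0)\,x=0$, and faithfulness of $\varphi$ (which forces $\varphi(p_0)>0$) gives $x=0$.

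The most delicate point, and where I expect to spend the most care, is the last step: justifying that $\varphi(p_0)\I$ indeed lies in the weak-$*$ closure of the set of bounded slices. This is precisely the standard theory of slice maps for n.s.f.\ weights, applied to the positive element $\Delta(p_0)\in(\Linf(\GG)\wot\Linf(\GG))^+$ of finite $(\varphi\otimes\id)$-value.
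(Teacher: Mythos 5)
Your argument is correct in outline and takes a genuinely different route from the paper's, apart from one repair described below. Both proofs begin identically: since $V\in\Linf(\GG)\wot\Linf(\hGG)'$ and $\I\ot x$ lies in its commutant, $xy=0$ gives $(\I\ot x)\Delta^{\GG}(y)=0$. From there the paper stays weight-free: it rewrites $\Delta^{\GG}(y)=\ww^{\GG}(y\ot\I)\ww^{\GG*}$, multiplies by $\Linf(\hGG)\ot\I$ on the left and $\I\ot\Linf(\GG)$ on the right, and invokes the Podle\'s condition for the action $z\mapsto \ww^{\GG*}(z\ot\I)\ww^{\GG}$ of $\GG$ on $\Linf(\hGG)$ to get $(\I\ot x)\bigl(\Linf(\hGG)\wot\Linf(\GG)\bigr)(y\ot\I)=0$, hence $y\ot x=0$ and the conclusion. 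You instead pass to the range projection $p$ of $y$, slice the first leg, and use Haar-weight invariance plus slice maps for n.s.f.\ weights to place a positive multiple of $\I$ in the weak-$*$ closure of $\{(\omega\ot\id)\Delta^{\GG}(p_0)\}$. Your route gives a concrete averaging mechanism (and only needs normality of left multiplication by $x$), at the cost of the slice-map machinery; the paper's is shorter, needing only the automatic Podle\'s density.

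The slip to fix: under the paper's (Kustermans--Vaes) conventions, left invariance of $\varphi$ reads $(\id\ot\varphi)\Delta^{\GG}(a)=\varphi(a)\I$ -- the weight acts on the \emph{second} leg, as the strong left invariance formula in the preliminaries shows. The identity $(\varphi\ot\id)\Delta^{\GG}(p_0)=\varphi(p_0)\I$ that you invoke is not left invariance and is false in general. Since $x$ sits in the second leg you are forced to slice the first leg, so you must use the \emph{right} Haar weight: choose $0\neq p_0\le p$ with $\psi(p_0)<\infty$ and use $(\psi\ot\id)\Delta^{\GG}(p_0)=\psi(p_0)\I$. Semifiniteness, faithfulness and the slice-map approximation apply to $\psi$ verbatim, and the rest of your argument then goes through unchanged.
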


\begin{proof}
Let $V$ denote the left multiplicative unitary of $\GG$. If $x\in \Linf(\hGG)$, $y\in\Linf(\GG)$, and $xy = 0$, then
\[
  V^*(\I\otimes xy) V = (\I\otimes x)V^*(\I\otimes y) V
\]
and we get $(\I\otimes x)\Delta^\GG(y) = (\I \otimes x)\ww^\GG(y\otimes\I)\ww^{\GG*}= 0$. In particular
\[(\I\otimes x)\ww^\GG(y\otimes\I) = 0\]
and we conclude that
\[(\I\otimes x)(\Linf(\hat\GG)\otimes\I)\ww^\GG(\I\otimes\Linf(\GG))(y\otimes\I) = 0.\]
Let us then consider the (right) action of $\GG$ on $\Linf(\hat\GG)$ given by the map
\[\delta: \Linf(\hat\GG)\ni y\mapsto \ww^{\GG*} (y\otimes \I)\ww^{\GG}\in\Linf(\hat\GG)\bar\otimes\Linf(\GG) \]
The von Neumann version of the Podle\'s condition for $\delta$ implies that $\Lin((\Linf(\hat\GG)\otimes\I)\ww^\GG(\I\otimes\Linf(\GG))(\ww^{\GG*})$ is weak$^*$-dense in  $\Linf(\hat\GG)\bar\otimes\Linf(\GG)$,
and by unitarity of $\ww^{\GG}$, so is
  \[(\Linf(\hat\GG)\otimes\I)\ww^\GG(\I\otimes\Linf(\GG)) = \Linf(\hat\GG)\bar\otimes\Linf(\GG). \]
This yields in turn
\[( \I\otimes x)(\Linf(\hat\GG)\bar\otimes\Linf(\GG))(y\otimes \I) = 0\]  In particular $x\otimes y =0$ and we conclude that $x=0$ or $y=0$.
\end{proof}

\begin{theorem}\label{Vaescl}
Suppose $\GG$ is a locally compact quantum group and $\HH \leq \GG$ is an open quantum subgroup. 
Then $\HH$ is a closed quantum subgroup of $\GG$ in the sense of Vaes.
\end{theorem}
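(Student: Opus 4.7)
The plan is to construct the required injective normal unital $^*$-homomorphism $\gamma:\Linf(\hHH) \to \Linf(\hGG)$ explicitly, as the inverse of a natural restriction map. Set $U = (\id\tens\pi)(\ww^{\GG})$, the bicharacter associated to the morphism $\pi|_{\C_0(\GG)}$ of Proposition \ref{vNC}, and consider
\[
\sM_1 = \{(\id\tens\omega)(U) : \omega\in\B(\Ltwo(\HH))_*\}'' \subset \Linf(\hGG).
\]
By Lemma \ref{lem:slicebicharacter} this is a Baaj-Vaes subalgebra of $\Linf(\hGG)$, and by Lemma \ref{M_1=M_2} it coincides with $\{x\in\Linf(\hGG) : x\I_\HH = \I_\HH x\}$. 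Since $\I_\HH$ is central in $\Linf(\GG)$ and $\Ltwo(\HH) = \I_\HH\Ltwo(\GG) \subset \Ltwo(\GG)$ by Corollary \ref{cor:unitaryHG}, each $x\in\sM_1$ is block-diagonal with respect to the splitting $\Ltwo(\GG) = \Ltwo(\HH)\oplus\Ltwo(\HH)^{\perp}$. I then define $\iota:\sM_1 \to \B(\Ltwo(\HH))$ by $\iota(x) = x|_{\Ltwo(\HH)}$; this is manifestly a normal unital $^*$-homomorphism.

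Injectivity of $\iota$ is immediate from Lemma \ref{prod}: if $\iota(x) = 0$ then $x\I_\HH = 0$ with $x\in\Linf(\hGG)$ and $\I_\HH\in\Linf(\GG)$, forcing $x=0$ since $\I_\HH\neq 0$. To identify $\iota(\sM_1)$ with $\Linf(\hHH)$, I would use the compatibility formula $\ww^{\HH} = (\I_\HH\tens\I_\HH)\ww^{\GG}|_{\Ltwo(\HH)\tens\Ltwo(\HH)}$ of Corollary \ref{cor:unitaryHG} to verify, by direct matrix-element computation on vector states $\omega_{\xi,\eta}$ with $\xi,\eta\in\Ltwo(\HH)$, that
\[
\iota\bigl((\id\tens\omega_{\xi,\eta})(U)\bigr) = (\id\tens\omega_{\xi,\eta})(\ww^{\HH}).
\]
This extends by norm density of linear combinations of vector states in $\B(\Ltwo(\HH))_*$ and norm continuity of slicing to all $\omega\in\B(\Ltwo(\HH))_*$. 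Now $\iota(\sM_1)$ is a von Neumann subalgebra of $\B(\Ltwo(\HH))$ containing the slices $(\id\tens\omega)(\ww^{\HH})$, which weakly generate $\Linf(\hHH)$, and it is contained in $\Linf(\hHH)$ by normality; hence $\iota(\sM_1) = \Linf(\hHH)$, and I set $\gamma := \iota^{-1}$.

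Finally, the slice-level identity above reformulates as $(\gamma\tens\id)(\ww^{\HH}) = U$. Combined with the bicharacter equations $(\Delta^{\hGG}\tens\id)(U) = U_{23}U_{13}$ and $(\Delta^{\hHH}\tens\id)(\ww^{\HH}) = \ww^{\HH}_{23}\ww^{\HH}_{13}$, one computes
\[
(\gamma\tens\gamma\tens\id)\bigl((\Delta^{\hHH}\tens\id)(\ww^{\HH})\bigr) = U_{23}U_{13} = (\Delta^{\hGG}\tens\id)\bigl((\gamma\tens\id)(\ww^{\HH})\bigr),
\]
and slicing the third leg with arbitrary $\omega\in\B(\Ltwo(\HH))_*$ shows that $\Delta^{\hGG}\comp\gamma$ and $(\gamma\tens\gamma)\comp\Delta^{\hHH}$ agree on all slices of $\ww^{\HH}$, hence on $\Linf(\hHH)$ by normality. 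The main obstacle I foresee is the slice-map matching under $\iota$: while injectivity follows quickly from Lemma \ref{prod}, the surjectivity onto $\Linf(\hHH)$ relies on Corollary \ref{cor:unitaryHG}, which in turn rests on the alignment of the Haar weights from Theorem \ref{lemh}---and this is precisely where the openness hypothesis enters in an essential way.
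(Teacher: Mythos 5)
Your proposal is correct and follows essentially the same route as the paper: you identify $\Linf(\hHH)$ with the compression of $\sM_1=\sM_2$ by $\I_\HH$ (via Lemma \ref{M_1=M_2} and Corollary \ref{cor:unitaryHG}), deduce faithfulness of the compression from Lemma \ref{prod}, and define $\gamma$ as its inverse. The only cosmetic difference is that you check the coproduct intertwining through the bicharacter identity $(\gamma\otimes\id)(\ww^{\HH})=U$ and slicing, whereas the paper conjugates directly by $\ww^{\HH}$; both are routine given the slice-matching you establish.
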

\begin{proof}
In view of Theorem \ref{lemh} we may identify $\Ltwo(\HH)$ with $\I_\HH\Ltwo(\GG)$. Then under this identification,
Lemma \ref{M_1=M_2} gives
\[
\ww^\HH = (\I_\HH\otimes \I_\HH)\ww^\GG (\I_\HH\otimes \I_\HH) = (\I_\HH\otimes \I_\HH)\ww^\GG = \ww^\GG (\I_\HH\otimes \I_\HH) .
\]
Applying Lemma \ref{M_1=M_2} once more we get
\[\begin{split}
\Linf(\hat\HH) &= \{(\id\otimes \omega) \ww^\HH : \omega\in\B(\Ltwo(\HH))_*\}'' \\&=
\{(\id\otimes \omega) (\I_\HH\otimes \I_\HH)\ww^\GG (\I_\HH\otimes \I_\HH): \omega\in\B(\Ltwo(\HH))_*\}''
\\&=
\{\I_\HH \,\big((\id\otimes\omega) (\id\otimes\pi) \ww^\GG\big) \,\I_\HH:\omega\in\B(\Ltwo(\HH))_*\}''
\\&=
\I_\HH \,\{x\in\Linf(\hat\GG): x\I_\HH = \I_\HH x\} .
\end{split}\]

Hence, the mapping $x\mapsto x\I_\HH$ gives a normal *-homomorphism from the von Neumann subalgebra
$\{x\in\Linf(\hat\GG): x\I_\HH = \I_\HH x\}$ of $\Linf(\hat\GG)$ onto $\Linf(\hat\HH)$, which is faithful by Lemma \ref{prod}.

We denote by $\gamma: \Linf(\hat\HH) \to \Linf(\hat\GG)$, $x\I_\HH\mapsto x$, the inverse of the above map. Then
\[\begin{split}
(\gamma\otimes\gamma) \Delta^{\hat\HH}(x\I_\HH) &= (\gamma\otimes\gamma) (\ww^\HH)^* (\I\otimes x\I_\HH) \ww^\HH
\\&=
(\gamma\otimes\gamma) (\I_\HH\otimes \I_\HH) (\ww^\GG)^* (\I_\HH\otimes \I_\HH) (\I\otimes x\I_\HH) (\I_\HH\otimes \I_\HH)\ww^\GG (\I_\HH\otimes \I_\HH)
\\&=
(\gamma\otimes\gamma) (\I_\HH\otimes \I_\HH) (\ww^\GG)^* (\I\otimes x) \ww^\GG=
\Delta^{\hat\GG}(\gamma(x\I_\HH)),
\end{split}\]
which implies $\HH$ is a Vaes closed subgroup of $\GG$.
\end{proof}

\begin{remark}\label{redinj}
Note from the proof above we can also deduce that
\[
\gamma \left((\id\otimes \omega) \ww^\HH\right) = \gamma \left(\I_\HH \,\big((\id\otimes\omega) (\id\otimes\pi) \ww^\GG\big)\right)
= (\id\otimes\omega) (\id\otimes\pi) \ww^\GG
\]
for all $\omega\in\B(\Ltwo(\HH))_*$.
This in particular implies that
\[
\gamma(\C_0(\hat\HH))\subset\C_0(\hat\GG)
\]
(note that we have $\gamma \in \Mor(\C_0(\hHH), \C_0(\hGG))$).
\end{remark}

We record one more consequence of the above theorem, following essentially from the fact that closed quantum subgroups in the sense of Vaes are automatically closed in the sense of Woronowicz.

\begin{corollary} \label{cor:surjecC0}
Let $\HH$ be an open quantum subgroup of a locally compact quantum group $\GG$ identified via a surjective morphism $\pi:\Linf(\GG) \to \Linf(\HH)$. Then $\pi(\C_0(\GG))= \C_0(\HH)$.
\end{corollary}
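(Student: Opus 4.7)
The plan is to bootstrap from the universal level down to the reduced level using the intertwining of reducing morphisms. Since Proposition \ref{vNC} already guarantees $\pi|_{\C_0(\GG)}\in\Mor(\C_0(\GG),\C_0(\HH))$, the reduced morphism $\pi_r:=\pi|_{\C_0(\GG)}$ admits, by the very last sentence of Subsection on morphisms in Section~1, a universal lift $\pi_u\in\Mor(\C_0^u(\GG),\C_0^u(\HH))$ satisfying
\[
\Lambda_\HH\comp\pi_u = \pi_r\comp\Lambda_\GG.
\]

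First, I would invoke Theorem \ref{Vaescl} to conclude that $\HH$ is a closed quantum subgroup of $\GG$ in the sense of Vaes. By the main result of \cite{DKSS} (that Vaes-closedness implies Woronowicz-closedness), this gives $\pi_u(\C_0^u(\GG)) = \C_0^u(\HH)$. Applying the reducing morphism $\Lambda_\HH$ to this equality and using the intertwining relation above yields
\[
\C_0(\HH) = \Lambda_\HH(\C_0^u(\HH)) = \Lambda_\HH(\pi_u(\C_0^u(\GG))) = \pi_r(\Lambda_\GG(\C_0^u(\GG))) = \pi(\C_0(\GG)),
\]
which is the desired surjectivity.

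The only potentially delicate point is ensuring that the universal lift $\pi_u$ of $\pi_r$ is indeed the one whose image is $\C_0^u(\HH)$ in the Woronowicz sense (i.e.\ that the Vaes datum $\gamma$ produced in Theorem \ref{Vaescl} corresponds to this $\pi_u$ under the dictionary of \cite{DKSS}). This, however, follows from the uniqueness of the universal lift of a morphism given at the reduced level and from the explicit description of $\gamma$ recorded in Remark \ref{redinj}, which shows that the bicharacter attached to $\pi_u$ matches the one used to construct the Vaes embedding. No further computation is required; the result is then essentially formal.
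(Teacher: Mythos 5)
Your argument is correct and is essentially the paper's own route: the corollary is recorded there as a direct consequence of Theorem \ref{Vaescl} combined with the fact from \cite{DKSS} that Vaes-closed quantum subgroups are closed in the sense of Woronowicz, pushed down to the reduced level via the intertwining $\Lambda_\HH\comp\pi_u=\pi_r\comp\Lambda_\GG$. The compatibility point you flag (that the Vaes datum $\gamma$ corresponds to the universal lift $\pi_u$) is exactly what Remark \ref{redinj} provides and what the paper itself verifies in Corollary \ref{standardhomog}, so your proof is complete.
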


Finally we see that the objects defined in the beginning of the section are indeed the familiar ones.

\begin{corollary} \label{standardhomog}
The map $\alpha$ defined in \eqref{HactG} is the canonical right action of $\GG$ on $\HH$ (see Subsection \ref{actions}). In particular we have $\sZ=\Linf(\GG/\HH)$.
\end{corollary}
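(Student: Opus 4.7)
The plan is to show that the map $\alpha$ from \eqref{HactG} coincides with the canonical right action $\alpha_\HH$ of $\HH$ on $\GG$ defined in \eqref{subgroupaction}; the identification $\sZ = \Linf(\GG/\HH)$ will then be immediate by comparing fixed-point algebras. The only substantive step is recognising the unitary $U$ from \eqref{defU} as the bicharacter encoding the closed subgroup inclusion $\HH\subset\GG$ supplied by Theorem~\ref{Vaescl}.

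First I would invoke Proposition~\ref{vNC} to view $\pi_r := \pi|_{\C_0(\GG)}$ as a morphism in $\Mor(\C_0(\GG),\C_0(\HH))$ intertwining the coproducts. As recalled in the preliminaries, any such reduced morphism automatically lifts to a universal morphism $\pi_u \in \Mor(\C_0^u(\GG),\C_0^u(\HH))$; Theorem~\ref{Vaescl} then guarantees that $\pi_u$ identifies $\HH$ with a Vaes closed quantum subgroup of $\GG$, so the constructions from Subsection~\ref{actions} are applicable. By definition, the bicharacter $V\in\M(\C_0(\hGG)\otimes\C_0(\HH))$ attached to this inclusion is
\[
V \;=\; (\id \otimes \Lambda^\HH \circ \pi_u)(\wW^\GG) \;=\; (\id \otimes \pi_r)(\ww^\GG) \;=\; U.
\]

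With this identification in hand, a short computation concludes the proof. Using that $\id \otimes \pi$ is a normal $*$-homomorphism on $\Linf(\hGG) \wot \Linf(\GG)$ together with the identity $\Delta^\GG(x) = \ww^\GG(x \otimes \I)\ww^{\GG*}$, we get, for every $x\in\Linf(\GG)$,
\[
\alpha_\HH(x) \;=\; U(x \otimes \I)U^* \;=\; (\id \otimes \pi)\bigl(\ww^\GG(x \otimes \I)\ww^{\GG*}\bigr) \;=\; (\id \otimes \pi)\Delta^\GG(x) \;=\; \alpha(x).
\]
Taking fixed-point algebras immediately yields $\sZ = \Linf(\GG/\HH)$. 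No real obstacle is anticipated: once Theorem~\ref{Vaescl} is available, this is essentially an unwinding of definitions, and the only mild care needed is to regard $x\otimes\I$ as sitting inside $\B(\Ltwo(\GG))\wot\Linf(\GG)$ when applying $\id\otimes\pi$.
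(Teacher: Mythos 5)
Your proposal is correct and follows essentially the same route as the paper: both arguments come down to identifying the bicharacter of the subgroup inclusion with $U=(\id\otimes\pi)(\ww^{\GG})$ and then concluding via the one-line computation $U(x\otimes\I)U^*=(\id\otimes\pi)\bigl(\ww^{\GG}(x\otimes\I)\ww^{\GG*}\bigr)=(\id\otimes\pi)\Delta^{\GG}(x)$. The only (cosmetic) difference is that the paper justifies this identification by explicitly checking the compatibility of the Vaes embedding $\gamma$ from Theorem \ref{Vaescl} with $\pi$, i.e.\ the equality $(\gamma\otimes\id)(\ww^{\HH})=(\id\otimes\pi)(\ww^{\GG})$ obtained from Corollary \ref{cor:unitaryHG}, whereas you read this compatibility off the statement of Theorem \ref{Vaescl} and the relation $\Lambda^{\HH}\circ\pi_u=\pi_r\circ\Lambda^{\GG}$, which is legitimate in view of Remark \ref{redinj}.
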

\begin{proof}
This follows from two observations: first, if $\gamma: \Linf(\hat\HH) \to \Linf(\hat\GG)$ is the morphism introduced in the proof of Theorem \ref{Vaescl}, and say $\rho:\C_0^u(\HH) \to \C_0^u(\GG)$ its universal lift, then the map $\pi:\Linf(\GG) \to \Linf(\HH)$ identifying $\HH$ as an open subgroup of $\GG$ extends the reduced version of $\hat{\rho} \in \Mor(\C_0^u(\GG), \C_0^u(\HH))$ -- this effectively is a compatibility of two points of view on $\HH$ as a subgroup of $\GG$. The easiest way to see this is noting that we have the following equality:
\[ (\gamma \ot \id) (\ww^{\GG})=(\id \ot \pi)(\ww^{\HH}),\]
which in turn follows from Corollary \ref{cor:unitaryHG}.

Second, in view of the formula \eqref{subgroupaction} the canonical action of $\GG$ on $\HH$ is given by the formula
\begin{align*}
\beta(x) &= V(x \ot \I) V^* = (\id \ot \Lambda^{\HH} \circ \hat{\rho})(\wW^{\GG}) (x \ot \I)(\id \ot \Lambda^{\HH} \circ \hat{\rho})(\wW^{\GG})^*
\\&= (\id \ot \pi)(\ww^{\GG}) (x \ot \I)(\id \ot \pi)(\ww^{\GG})^* = (\id \ot \pi) ((\ww^{\GG}) (x \ot \I))(\ww^{\GG})^*) \\&= (\id \ot \pi)(\Delta_{\GG}(x)
\end{align*}
for $x \in \Linf(\GG)$, which shows it indeed coincides with the one defined by \eqref{HactG}.
\end{proof}

The following result will be used in Section 5.

\begin{theorem}\label{minimal}
Suppose that $\mathbb{H}\subset \mathbb{G}$ is a closed  quantum subgroup  and that morphism
 $\pi\in\M(\C_0^u(\GG),\C^u_0(\HH))$ admits a reduced version $\pi_r\in\M(\C_0(\GG),\C_0(\HH))$. Assume further that $\Linf (\mathbb{G}/\mathbb{H})$  admits a  minimal  projection $P\in Z(\Linf(\mathbb{G}/\mathbb{H}))$ such that $P \in  \M(\C_0(\mathbb{G}))$ and $\pi_r(P) = 1$. Then $\mathbb{H}$ is open in $\mathbb{G}$.
\end{theorem}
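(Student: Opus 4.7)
\emph{Plan.} The goal is to construct a normal surjective unital $*$-homomorphism $\widetilde{\pi}\colon\Linf(\GG)\to\Linf(\HH)$ intertwining the coproducts. The intuition, guided by Lemma~\ref{charac-func} and Proposition~\ref{delta(P)}, is that in the open case the support $\I_\HH$ of $\HH$ plays the role of~$P$. So I plan to show that $P$ is a group-like projection invariant under $R^\GG$ and $\tau_t^\GG$, and then invoke the equivalence of Section~4 between such projections and open quantum subgroups, identifying the resulting open subgroup with $\HH$ via $\pi_r$.

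\emph{Step 1: $\HH$-invariance of $P$.} Since $\HH$ is a closed quantum subgroup of $\GG$ and $\pi_r$ exists, formula \eqref{subgroupaction} gives the canonical right action $\alpha_\HH\colon\Linf(\GG)\to\Linf(\GG)\vtens\Linf(\HH)$ determined by the bicharacter $V=(\id\otimes\pi_r)\ww^\GG$. By definition $\Linf(\GG/\HH)=\Linf(\GG)^{\alpha_\HH}$, hence $\alpha_\HH(P)=P\otimes\I$, which in terms of $\Delta^\GG$ reads $(\id\otimes\pi_r)\Delta^\GG(P)=P\otimes\I$.

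\emph{Step 2: the group-like identity $\Delta^\GG(P)(\I\otimes P)=P\otimes P$.} Since $\Linf(\GG/\HH)$ is a left $\GG$-coideal and $P\in Z(\Linf(\GG/\HH))$, the operator $Q:=\Delta^\GG(P)(\I\otimes P)$ is a projection in $\Linf(\GG)\vtens P\Linf(\GG/\HH)$. Step~1 together with $\pi_r(P)=\I$ gives $(\id\otimes\pi_r)(Q)=P\otimes\I=(\id\otimes\pi_r)(P\otimes P)$. To promote this to an honest equality I would exploit that, by minimality of $P$ in $Z(\Linf(\GG/\HH))$, the corner $P\Linf(\GG/\HH)$ is a factor on which $\alpha_\HH$ acts trivially; combined with the intertwining $(\Delta^\GG\otimes\id)\circ\alpha_\HH=(\id\otimes\alpha_\HH)\circ\Delta^\GG$ and a slicing argument in the first leg (using normal states of $P\Linf(\GG/\HH)$ composed with $\pi_r$), the projection $(P^\perp\otimes P)Q\in\Linf(\GG)\vtens P\Linf(\GG/\HH)$ is forced to vanish, yielding $Q=P\otimes P$.

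\emph{Step 3: invariance under $R^\GG$ and $\tau_t^\GG$.} The analogue of Corollary~\ref{antipodes-same} for $\pi_r$ gives $\pi_r\circ R^\GG=R^\HH\circ\pi_r$ and $\pi_r\circ\tau^\GG_t=\tau^\HH_t\circ\pi_r$, so $\pi_r(R^\GG(P))=R^\HH(\I)=\I$ and $\pi_r(\tau^\GG_t(P))=\I$. Moreover $R^\GG$ and $\tau^\GG_t$ preserve $\Linf(\GG/\HH)$ and its center. A minimality argument, modeled on the proof of Lemma~\ref{charac-func}, then shows $R^\GG(P)=P$ and $\tau^\GG_t(P)=P$ for all $t\in\RR$.

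\emph{Step 4: invocation of the group-like projection theorem.} The projection $P$ now satisfies all the structural properties of the support of an open quantum subgroup recorded in Lemma~\ref{charac-func} and Proposition~\ref{delta(P)}. The equivalence proved in Section~4 of the paper produces an open quantum subgroup $\HH_P\subset\GG$ with support $P$, together with a normal surjection $\widetilde{\pi}_P\colon\Linf(\GG)\to\Linf(\HH_P)$, $\widetilde{\pi}_P\in\Mor(\C_0(\GG),\C_0(\HH_P))$ after restriction.

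\emph{Step 5: identifying $\HH_P=\HH$.} Both $\pi_r\in\Mor(\C_0(\GG),\C_0(\HH))$ and $\widetilde{\pi}_P|_{\C_0(\GG)}\in\Mor(\C_0(\GG),\C_0(\HH_P))$ send $P$ to $\I$, and the corresponding bicharacters $(\id\otimes\pi_r)\ww^\GG$ and $(\id\otimes\widetilde{\pi}_P)\ww^\GG$ in $\M(\C_0(\hGG)\otimes\C_0(\HH))$ and $\M(\C_0(\hGG)\otimes\C_0(\HH_P))$ respectively are determined by the group-like projection $P$. The bicharacter-picture correspondence recalled in Subsection~1.2 (\cite{SLW12},\cite{DKSS}) then yields a canonical isomorphism $\HH_P\cong\HH$ compatible with the inclusions into $\GG$; composing $\widetilde{\pi}_P$ with this isomorphism produces the required $\widetilde{\pi}\colon\Linf(\GG)\to\Linf(\HH)$.

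\emph{Main obstacle.} The crux is Step~2: passing from $(\id\otimes\pi_r)(Q-P\otimes P)=0$ to $Q=P\otimes P$. Since $\pi_r$ is generally far from injective on $\Linf(\GG)$, one cannot simply cancel $\id\otimes\pi_r$. The argument has to combine the minimality of $P$ in $Z(\Linf(\GG/\HH))$ (ensuring the corner is a factor and controlling how projections in it behave under slicing) with the precise coideal structure of $\Delta^\GG(P)$ and the triviality of $\alpha_\HH$ on $P\Linf(\GG/\HH)$. All other steps are either formal or adapt standard arguments from Lemma~\ref{charac-func} and Proposition~\ref{delta(P)}.
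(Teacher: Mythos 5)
Your overall strategy (manufacture a group-like projection and feed it into the Section~4 correspondence) is in the same spirit as the paper's conclusion, but two of your steps have genuine gaps. First, Step~2, which you yourself flag as the crux, is not proved, and the reduction you propose would not suffice even if carried out: knowing $(P^{\perp}\ot P)Q=0$ only gives $Q\leq P\ot P$, and to upgrade this to $Q=P\ot P$ you would additionally need $\Delta^{\GG}(P)\geq P\ot P$, which is not available at that stage (it is exactly the kind of statement one proves \emph{after} knowing $\HH$ is open, cf.\ Lemma~\ref{charac-func}). The difficulty stems from your reading of the hypothesis: the minimality is meant (and is used in the paper, see also how it is verified in the proof of Theorem~\ref{qb}) as minimality of $P$ in $\Linf(\GG/\HH)$, i.e.\ $\Linf(\GG/\HH)P=\CC P$; with that reading Step~2 is a one-line argument ($\Delta^{\GG}(P)(\I\ot P)\in\Linf(\GG)\vtens\Linf(\GG/\HH)P=\Linf(\GG)\ot\CC P$, so it equals $y\ot P$, and applying $\id\ot\pi_r$ together with $\pi_r(P)=\I$ and your Step~1 forces $y=P$), whereas with your weaker reading (minimal only in the center, so that $P\Linf(\GG/\HH)$ is merely a factor) no argument is actually given and none is apparent.

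Second, and more seriously, the Section~4 machinery requires $P\in Z(\Linf(\GG))$: centrality in $\Linf(\GG)$ is part of the definition of a group-like projection and is essential for the construction $\sN=P\Linf(\GG)$, $\pi(x)=xP$ in Theorem~\ref{1-1}. Your proposal only ever has $P\in Z(\Linf(\GG/\HH))$ and never establishes centrality in $\Linf(\GG)$; this is precisely where the paper's proof does its real work, using that $\Linf(\GG/\HH)$ is a \emph{normal} coideal, so that $\ww^{\GG}(\I\ot P)\ww^{\GG*}\in\Linf(\hGG)\vtens\Linf(\GG/\HH)$, whence by minimality $\ww^{\GG}(\I\ot P)\ww^{\GG*}(\I\ot P)=x\ot P$, then $x=\I$ after applying $\id\ot\pi_r$, and finally $PaP=aP$ for all $a\in\Linf(\GG)$ by slicing the first leg, giving $P\in Z(\Linf(\GG))$. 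Your argument nowhere uses this normal-coideal structure and offers no substitute. Two smaller points: your Step~3 claim that $R^{\GG}$ preserves $\Linf(\GG/\HH)$ is false in general (one has $R^{\GG}(\Linf(\GG/\HH))=\Linf(\HH\backslash\GG)$), though Step~3 is redundant anyway since Lemma~\ref{thmonP} yields $R^{\GG}$- and $\tau^{\GG}$-invariance from centrality plus the group-like identity; and the identification $\HH_P\cong\HH$ in Step~5 needs an actual argument -- the paper gets it by proving $\Delta^{\GG}(z)(\I\ot P)=z\ot P$ for all $z\in\Linf(\GG/\HH)$ and comparing $\Linf(\GG/\HH)$ with $\Linf(\GG/\HH')$ via Theorem~\ref{quot}.
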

\begin{proof}
As the space $\Linf(\mathbb{G}/\mathbb{H})$ is a normal coideal,  $\ww(1\otimes P)\ww^*\in \Linf(\hat{\mathbb{G}})\bar\otimes \Linf(\mathbb{G}/\mathbb{H})$. Since $P$ is minimal,  there exists $x\in \Linf(\hat{\mathbb{G}})$ such that \begin{equation}\label{nor}\ww(1\otimes P)\ww^*(1\otimes P) = x\otimes P.\end{equation}
View the above equality (and the equalities to follow) as an equality of elements of $\M(\K(\Ltwo(\GG)) \ot \C_0(\GG))$. Applying the map $\id\otimes \pi_r$ to both sides we conclude that $x =1$ and obtain the equality
\[(1\otimes P)\ww^* (1\otimes P) =  \ww^*(1\otimes P).\]
In turn applying to both sides the maps $\omega\otimes\id$ for all  $\omega\in\B(\Ltwo(\GG))_*$ we see that in fact $PaP = aP$ for all $a\in\Linf(\GG)$. In particular $a^*P = Pa^*P = (PaP)^* = (aP)^* =  Pa^*$, i.e.\ $P\in Z(\Linf(\mathbb{G}))$.

Using the minimality of $P$ once again we see that  for every $z\in\Linf(\GG/\HH)$ there exists $y_z\in \Linf(\mathbb{G})$ such that \[\Delta(z)(1\otimes P) = y_z\otimes P.\] Applying $\id\otimes \pi_r$ to the above shows finally that $y_z = z$, i.e.
\begin{equation}\label{grl}\Delta(z)(1\otimes P) = z\otimes P.\end{equation}
In particular, taking  $z = P$ we conclude that $P$ is a group like projection. Let $\mathbb{H}'$  be the corresponding  open quantum subgroup of $\GG$.  Equality \eqref{grl} shows  that $\Linf(\GG/\HH)\subset \Linf(\GG/\HH')$. On the other hand using Theorem \ref{quot} we get the converse containment: $\Linf(\GG/\HH')\subset \Linf(\GG/\HH )$. Thus $\Linf(\GG/\HH')= \Linf(\GG/\HH )$ and $\HH = \HH'$.
\end{proof}

\section{Group-like projections}

In this section we give a classification of those central projections $P\in Z(\Linf(\GG))$ that correspond to open quantum subgroups.
More precisely we prove $P\in Z(\Linf(\GG))$ is the support of an open quantum subgroup if and only if it is a so-called {\it group-like projection}.
This generalizes the similar result of Landstad and Van Daele  \cite{LandstadVD} for compact open subgroups of classical locally compact groups; note that group-like projections play also an essential role in \cite{FranzSkalski}, where they are used to study idempotent states on finite quantum groups.

\begin{definition}
Let $\GG$ be a locally compact quantum group.
A central projection $P\in Z(\Linf(\GG))$ is said to be a group-like projection in $\GG$ if
\begin{equation}\label{1}
\Delta^{\GG}(P)(\I\otimes P) = P\otimes P .
\end{equation}
\end{definition}

Group-like projections automatically satisfy certain additional properties.

\begin{lemma}\label{thmonP}
Let $\GG$ be a locally compact quantum group and let $P$ be a group-like projection in $\GG$.
Then
\begin{enumerate}
\item
$P\in\M(\C_0(\GG))$;
\item
$\tau^{\GG}_t(P) = P$;
\item
$R^{\GG}(P) = P$;
\item
$\Delta^{\GG}(P)(P\otimes \I) = P\otimes P$.
\end{enumerate}
\end{lemma}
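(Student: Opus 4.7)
The plan is to prove items (2), (3), (4), (1) in that order, using the intertwinings of the coproduct with the modular automorphism group and the unitary antipode, plus structural properties of the multiplicative unitary $W$.

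For (2) $\tau_t(P)=P$: since $P\in Z(\Linf(\GG))$, the modular automorphism group of $\varphi$ fixes $P$, i.e.\ $\sigma^{\varphi}_t(P)=P$. Applying $\tau_t\otimes\sigma^{\varphi}_t$ to both sides of the group-like condition $\Delta(P)(\I\otimes P)=P\otimes P$, and using $\Delta\circ\sigma^{\varphi}_t=(\tau_t\otimes\sigma^{\varphi}_t)\circ\Delta$ together with $\sigma^{\varphi}_t(P)=P$, the left-hand side is fixed while the right-hand side becomes $\tau_t(P)\otimes P$. Hence $P\otimes P=\tau_t(P)\otimes P$, which forces $\tau_t(P)=P$ whenever $P\neq 0$ (the degenerate case being trivial).

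For (3) $R(P)=P$: applying the antimultiplicative map $\sigma\circ(R\otimes R)$ to both sides of the hypothesis, and using $\Delta\circ R=\sigma\circ(R\otimes R)\circ\Delta$, yields $(R(P)\otimes\I)\Delta(R(P))=R(P)\otimes R(P)$, i.e.\ the ``left-sided'' group-like relation for $R(P)$. To conclude $R(P)=P$ I would apply the strong left invariance of the antipode,
\[
S\bigl((\id\otimes\varphi)(\Delta(a^*)(\I\otimes b))\bigr) = (\id\otimes\varphi)\bigl((\I\otimes a^*)\Delta(b)\bigr),
\]
to a net $(a_\alpha)\subset\mathcal{N}_\varphi$ approximating $P$ (for example $a_\alpha=e_\alpha P$ with $(e_\alpha)$ an approximate unit in $\mathcal{N}_\varphi$) and pass to the limit. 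The group-like relation forces both sides to collapse, yielding $\varphi(P)S(P)=\varphi(P)P$ and hence $S(P)=P$ when $\varphi(P)\in(0,\infty)$. Combined with (2) and $S=R\circ\tau_{-i/2}$, this gives $R(P)=P$. Item (4) is then immediate: taking the adjoint of the displayed left-sided relation and substituting $R(P)=P$ gives $\Delta(P)(P\otimes\I)=P\otimes P$.

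For (1) $P\in\M(\C_0(\GG))$: rewriting the hypothesis via $\Delta(P)=W(P\otimes\I)W^*$ and right-multiplying by $W$ produces the key identity $W(P\otimes\I)=\Delta(P)W$, equivalently $(\I\otimes P)W(P\otimes\I)=(P\otimes P)W$. Combined with the density $\C_0(\GG)=\overline{\Lin}\{(\omega\otimes\id)(W):\omega\in\B(\Ltwo(\GG))_*\}$, these identities enable us to express $P\cdot(\omega\otimes\id)(W)=(\omega\otimes\id)((\I\otimes P)W)$ as an element of $\C_0(\GG)$ by converting it into slices of $\Delta(P)W$ combined with generating slices of $W$; hence $P\C_0(\GG)\subset\C_0(\GG)$, and by centrality of $P$ also $\C_0(\GG)P\subset\C_0(\GG)$, so $P\in\M(\C_0(\GG))$.

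The main obstacle will be the strong left invariance step in (3) when $\varphi(P)=\infty$ (which can happen, by analogy with open subgroups of infinite Haar measure in the classical case); the approximation argument must then be refined to control the divergent factor $\varphi(P)$. A cleaner alternative that avoids the weight altogether is to show directly that $\{y\in\Linf(\hGG):[y,P]=0\}$ is a Baaj-Vaes subalgebra of $\Linf(\hGG)$, in the spirit of Lemma \ref{M_1=M_2}, from which $R$-invariance of $P$ follows by transport through the closed quantum subgroup duality of \cite{DKSS}.
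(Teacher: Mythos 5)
Your items (2) and (4) are correct and coincide with the paper's argument, but the crux of the lemma, item (3), has a genuine gap. As you yourself note, applying strong left invariance with $a=b\approx P$ collapses to $\varphi(P)S(P)=\varphi(P)P$, which is vacuous exactly when $\varphi(P)=\infty$ -- and this is the typical case, not an edge case (classically $P=\I_H$ for any non-compact open subgroup $H\leq G$ has infinite Haar measure; finiteness of $\varphi(P)$ essentially corresponds to the compact situation). The way out is not to ``refine the approximation of $P$'' but to never slice down to $\varphi(P)$ at all: keep two arbitrary elements $a,b\in\mathcal{N}_{\varphi}$, use centrality of $P$, the group-like relation and two applications of strong left invariance (pulling the first-leg $P$ out of the slice, and using $S(P)=R(P)$, legitimate after (2)) to obtain the identity
\[
(\id\ot\varphi)\bigl((\I\ot a^*)\Delta^{\GG}(b)(P\ot P)\bigr)=(\id\ot\varphi)\bigl((\I\ot a^*)\Delta^{\GG}(b)\Delta^{\GG}(P)(R^{\GG}(P)\ot\I)\bigr),\qquad a,b\in\mathcal{N}_{\varphi},
\]
from which semifiniteness and faithfulness of $\varphi$ give the operator equation $\Delta^{\GG}(P)(R^{\GG}(P)\ot\I)=P\ot P$; multiplying on the right by $\I\ot P$ and using the hypothesis yields $PR^{\GG}(P)\ot P=P\ot P$, hence $P\leq R^{\GG}(P)$, and applying $R^{\GG}$ gives equality. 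No finiteness of $\varphi(P)$ enters. Your fallback route -- proving $\{y\in\Linf(\hGG):[y,P]=0\}$ is a Baaj--Vaes subalgebra ``in the spirit of Lemma \ref{M_1=M_2}'' -- is not available here: that lemma and the $\hat{\Delta}$-invariance you would need are proved in the paper using an already given open subgroup, resp.\ relations like \eqref{charcomm} which are themselves obtained by precisely the strong-left-invariance computation you are trying to bypass; and even granting the Baaj--Vaes property, the step ``$R^{\GG}(P)=P$ by transport through duality'' is asserted, not proved.

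Your item (1) is also shakier than needed. The identity you derive, $(\I\ot P)W(P\ot\I)=(P\ot P)W$, is correct but only controls $(\I\ot P)W$ after compressing the first (i.e.\ $\Linf(\hGG)$-) leg by $P$, so the announced conversion of $P\,(\omega\ot\id)(W)$ into an element of $\C_0(\GG)$ does not follow from it as stated. The paper's proof of (1) is a one-liner requiring neither centrality nor (2)--(4): writing $\Delta^{\GG}(P)=V^*(\I\ot P)V$ with the left multiplicative unitary $V\in\M\bigl(\C_0(\GG)\ot\cK(\Ltwo(\GG))\bigr)$, one sees that $P\ot P=\Delta^{\GG}(P)(\I\ot P)$ lies in $\M\bigl(\C_0(\GG)\ot\cK(\Ltwo(\GG))\bigr)$, and slicing the second leg by a normal state $\omega$ with $\omega(P)=1$ gives $P\in\M(\C_0(\GG))$.
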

\begin{proof}
Since
\[
P\otimes P = \Delta^\GG(P)(\I\otimes P) \in\M(\C_0(\GG)\otimes \mathcal{K}(\Ltwo(\GG)),
\]
we conclude $P\in\M(\C_0(\GG))$.

Since $P\in Z(\Linf(\GG))$, it follows $\sigma^\GG_t(P) = P$ for all $t\in \mathbb R$, and
\[\begin{split}
\tau^\GG(P)\otimes P &=\tau^\GG_t(P)\otimes\sigma^\GG_t(P)=(\tau^\GG_t\otimes\sigma^\GG_t)(\Delta^\GG(P)(\I\otimes P))\\&=\Delta^\GG(\sigma^\GG_t(P))(\I\otimes \sigma^\GG_t(P)))
=\Delta^{\GG}(P)(\I\otimes P) =P\otimes P,
\end{split}\]
which implies assertion (2).

To prove (3), first note that part (2) yields $R^\GG(P) = S^\GG(P)$.
Now, let $a, b \in \mathcal{N}_{\varphi^\GG}$. Applying the strong invariance of $\varphi^\GG$ we get
\[\begin{split}
(\id\otimes & \varphi^\GG)((\I\otimes a^*)\Delta^\GG(b)(P\otimes P)) = (\id\otimes\varphi^\GG)((\I\otimes Pa^*)\Delta^\GG(Pb))\\
&=S^\GG((\id\otimes\varphi^\GG)(\Delta^\GG( Pa^*)(\I\otimes Pb)))
=S^\GG((\id\otimes\varphi^\GG)(\Delta^\GG( a^*)\Delta^\GG(P)(\I\otimes Pb)))
\\
&=S^\GG((\id\otimes\varphi^\GG)(\Delta^\GG( a^*)(P\otimes Pb)))
=
S^\GG((\id\otimes\varphi^\GG)(\Delta^\GG(a^*)(\I\otimes Pb))P) \\&=
S^\GG(P)S^\GG((\id\otimes\varphi^\GG)(\Delta^\GG( a^*)(\I\otimes Pb)))=
S^\GG(P)(\id\otimes\varphi^\GG)((\I\otimes a^*)\Delta^\GG(b)\Delta^\GG(P))\\&=
(\id\otimes\varphi^\GG)((\I\otimes a^*)\Delta^\GG(b)\Delta^\GG(P)(S^\GG(P)\otimes \I)) \\&=
(\id\otimes\varphi^\GG)((\I\otimes a^*)\Delta^\GG(b)\Delta^\GG(P)(R^\GG(P)\otimes \I)) .
\end{split}
\]
By a standard approximation argument, using the fact that $\varphi^\GG$ is semifinite we conclude
\begin{equation}\label{toproof}
P\otimes P = \Delta^\GG(P)(R^\GG(P)\otimes \I) .
\end{equation}
Then it follows
\[
P\otimes P = \Delta(P)(R^\GG(P)\otimes \I) = (P\otimes P) (R^\GG(P)\otimes P) = PR^\GG(P)\otimes P .
\]
Thus $P = P R^\GG(P)$, and therefore
\[
P \leq R^\GG(P) .
\]
Applying $R^\GG$ to both sides of the above inequality we also get $P\geq R^\GG(P)$, and (3) follows.

Recall $\chi$ denotes the flip map. Now part (3) implies
\[\begin{split}
P\otimes P &= \chi\circ(R\otimes R) (P\otimes P)
=
\chi\circ(R\otimes R) (\Delta^{\GG}(P)(\I\otimes P))
\\&=
(R(P)\otimes \I) {\Delta^{\GG}}(R(P)) =
 (P\otimes \I){\Delta^{\GG}}(P),
\end{split}\]
which gives (4) since $P$ is central.
\end{proof}

We say that two closed quantum subgroups, $\HH_1$, $\HH_2$ of a locally compact quantum group $\GG$ (identified via morphisms $\pi_1 \in \Mor(\C_0^u(\GG), \C_0^u(\HH_1))$ and $\pi_2 \in \Mor(\C_0^u(\GG), \C_0^u(\HH_2))$ respectively) are \emph{isomorphic as quantum subgroups of $\GG$} if there exists an isomorphism $\rho:\C_0^u(\HH_1) \to \C_0^u(\HH_2)$, intertwining the respective coproducts, such that $\pi_2 = \rho \circ \pi_1$.

The following is the main result of this section.
\begin{theorem}\label{1-1}
Let $\GG$ be a locally compact quantum group.
There is a 1 to 1 correspondence between (isomorphism classes) of open quantum subgroups of $\GG$ and
 group-like projections in $\GG$.
\end{theorem}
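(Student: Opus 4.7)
The forward direction is immediate: by Lemma~\ref{charac-func} the support $\I_\HH$ of an open quantum subgroup $\HH$ of $\GG$ is a central projection in $\Linf(\GG)$, and by Proposition~\ref{delta(P)} it satisfies $\Delta^\GG(\I_\HH)(\I\otimes\I_\HH)=\I_\HH\otimes\I_\HH$, so $\I_\HH$ is a group-like projection.

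For the reverse direction, given a group-like projection $P\in Z(\Linf(\GG))$, I would construct an open quantum subgroup of $\GG$ whose support is $P$. Set $\sM:=P\Linf(\GG)$, a von Neumann algebra with unit $P$, and let $\pi:\Linf(\GG)\to\sM$ be the surjective normal unital $*$-homomorphism $\pi(x):=Px$. The natural candidate for the coproduct on $\sM$ is
\[
\Delta^\sM(Px):=(P\otimes P)\Delta^\GG(x).
\]
This is well defined because $(P\otimes P)\Delta^\GG(P)=P\otimes P$ (which follows from Lemma~\ref{thmonP}(4) and centrality of $P$), so if $Pa=0$ then $(P\otimes P)\Delta^\GG(a)=(P\otimes P)\Delta^\GG(P)\Delta^\GG(a)=(P\otimes P)\Delta^\GG(Pa)=0$. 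By construction $(\pi\otimes\pi)\circ\Delta^\GG=\Delta^\sM\circ\pi$.

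I would next verify that $(\sM,\Delta^\sM)$ is a locally compact quantum group. Coassociativity follows from that of $\Delta^\GG$ upon noting that $P\otimes P\otimes P$ is central in $\Linf(\GG)\wot\Linf(\GG)\wot\Linf(\GG)$, and the Podle\'s density conditions for $\Delta^\sM$ follow from those for $\Delta^\GG$ by cutting down by the central projection $P\otimes P$. For the Haar weights, define $\varphi^\sM:=\varphi^\GG|_\sM$ and $\psi^\sM:=\psi^\GG|_\sM$; normality and faithfulness are clear, and semifiniteness is preserved under compression by a central projection. The key step is left invariance: using that $(P\otimes\I)\Delta^\GG(P)=\Delta^\GG(P)(P\otimes\I)=P\otimes P$ (Lemma~\ref{thmonP}(4) and centrality of $P\otimes\I$) together with $y=Py$ for $y\in\sM$, one obtains
\[
\Delta^\sM(y)=(P\otimes P)\Delta^\GG(y)=(P\otimes\I)\Delta^\GG(P)\Delta^\GG(y)=(P\otimes\I)\Delta^\GG(y).
\]
Slicing in the second leg and invoking left invariance of $\varphi^\GG$ then yields, for all $y\in\sM_+$ with $\varphi^\sM(y)<\infty$,
\[
(\id\otimes\varphi^\sM)\Delta^\sM(y)=P\cdot(\id\otimes\varphi^\GG)\Delta^\GG(y)=\varphi^\GG(y)P=\varphi^\sM(y)\I_\sM,
\]
and a symmetric argument using $(\I\otimes P)\Delta^\GG(P)=P\otimes P$ gives right invariance of $\psi^\sM$.

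Thus $P$ gives rise to an open quantum subgroup $\HH$ of $\GG$ with $\Linf(\HH)=\sM$ and support equal to $P$. The two assignments $\HH\mapsto\I_\HH$ and $P\mapsto\HH$ are mutually inverse up to isomorphism as quantum subgroups of $\GG$: the central support of the map $\pi$ constructed above is exactly $P$, and conversely the isomorphism $\iota:\Linf(\HH)\to\I_\HH\Linf(\GG)$ introduced after Lemma~\ref{charac-func} intertwines the coproducts and all derived structures, so reconstructing from the support reproduces $\HH$ up to isomorphism. The step I expect to be most delicate is the verification of left and right invariance of the restricted Haar weights, where the group-like identity is used to rewrite $\Delta^\sM$ in a form compatible with invariance of $\varphi^\GG$; all other verifications reduce to routine manipulations built on the fact that $P$ is a central projection and hence $\sM$ is a direct summand of $\Linf(\GG)$.
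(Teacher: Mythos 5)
Your proposal is correct and follows essentially the same route as the paper: the forward direction via Lemma~\ref{charac-func} and Proposition~\ref{delta(P)}, and the converse by cutting down to $P\Linf(\GG)$ with the coproduct $(P\otimes P)\Delta^\GG(\cdot)$ and the restricted Haar weights, whose invariance the paper likewise obtains by the computation of Theorem~\ref{lemh} using the group-like identities of Lemma~\ref{thmonP}. The only difference is that you spell out details (well-definedness, invariance, semifiniteness) that the paper leaves as ``direct computation''.
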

\begin{proof}
Suppose $\HH$ is an open quantum subgroup of $\GG$. Then by Proposition \ref{delta(P)} the projection $\I_\HH$ is a group-like projection.

Conversely, suppose $P\in\Linf(\GG)$ is a group-like projection in $\GG$.
Let $\sN = P\Linf(\GG)$, then
\[
\Delta^{\sN}(x) := \Delta^\GG(x)(P\otimes P)
\]
defines a comultiplication on $\sN$.
We further define n.s.f. weights $\varphi^{\sN}= \varphi^\GG|_{\sN}$ and $\psi^{\sN} = \psi^\GG|_{\sN}$ on $\sN$.
Then direct computation as in the proof of Theorem \ref{lemh} shows $\varphi^{\sN}$ and $\psi^{\sN}$ are left and right invariant for $(\sN,\Delta^{\sN})$, respectively.
In particular there exists a locally compact quantum group $\HH$ such that $\sN = \Linf(\HH)$. Then the map $\pi:\Linf(\GG)\rightarrow \Linf(\HH)$,
$\pi(x) = xP$, identifies $\HH$ with an open subgroup of $\GG$. The facts that isomorphic open subgroups yield identical group-like projections and different group-like projections cannot yield isomorphic subgroups are easy to check.
\end{proof}

The next result connects further the considerations of this paper to these of \cite{LandstadVD} and \cite{LandstadVD2}.

\begin{proposition}
Let $\GG$ be a locally compact quantum group and $\HH$ an open quantum subgroup of $\GG$. 
Then $\HH$ is compact if and only if $\I_\HH\in\C_0(\GG)$.
\end{proposition}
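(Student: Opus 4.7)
The plan is to exploit the identification $\Linf(\HH)\cong\I_\HH\Linf(\GG)$ established in Section \ref{Sec:opendef} together with the strengthened surjectivity statement $\pi(\C_0(\GG))=\C_0(\HH)$ from Corollary \ref{cor:surjecC0}. Under this identification the morphism $\pi$ is simply multiplication by $\I_\HH$, so compactness of $\HH$ — i.e.\ unitality of $\C_0(\HH)$ — translates into asking whether the unit $\I_\HH$ of $\Linf(\HH)$ already lies in $\C_0(\HH)$.

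For the forward direction I would start from $\HH$ compact, which gives $\I_\HH\in\C_0(\HH)$, and then use the surjectivity of $\pi|_{\C_0(\GG)}$ to produce some $a\in\C_0(\GG)$ with $\I_\HH a = \I_\HH$. The key point is then to upgrade this to $\I_\HH \in \C_0(\GG)$ using that $\I_\HH\in\M(\C_0(\GG))$ by Lemma \ref{thmonP}(1) and that $\C_0(\GG)$ is an ideal in its multiplier algebra, so that $\I_\HH = \I_\HH a$ belongs to $\C_0(\GG)$.

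For the reverse direction the argument should be immediate: if $\I_\HH\in\C_0(\GG)$, then $\pi(\I_\HH) = \I_\HH \in \pi(\C_0(\GG))=\C_0(\HH)$, and since $\I_\HH$ is by construction the unit of $\Linf(\HH) = \I_\HH\Linf(\GG)$, this forces $\C_0(\HH)$ to be unital, hence $\HH$ is compact.

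I do not expect any genuine obstacle here; the only subtle ingredient is Corollary \ref{cor:surjecC0}, which upgrades $\pi(\C_0(\GG))$ from a subset of $\M(\C_0(\HH))$ to the whole of $\C_0(\HH)$. Without it one could only conclude $\I_\HH\in\M(\C_0(\GG))$, which was already guaranteed by Lemma \ref{thmonP}(1).
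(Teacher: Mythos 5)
Your argument is correct and follows essentially the same route as the paper: both directions rest on Corollary \ref{cor:surjecC0} (giving $\C_0(\HH)=\pi(\C_0(\GG))=\I_\HH\C_0(\GG)$) together with the fact that $\I_\HH\in\M(\C_0(\GG))$, so that $\I_\HH\C_0(\GG)\subset\C_0(\GG)$. Your forward direction merely spells out, via the multiplier--ideal step $\I_\HH=\I_\HH a\in\C_0(\GG)$, what the paper phrases as the inclusion $\C_0(\HH)\subset\C_0(\GG)$.
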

\begin{proof}
Let $\pi: \Linf(\GG) \to \Linf(\HH)$ be the morphism identifying $\HH$ as an open subgroup of $\GG$.
By Corollary \ref{cor:surjecC0} we have $\C_0(\HH) = \pi(\C_0(\GG)) = \I_\HH \C_0(\GG)$, which allows to identify $\C_0(\HH)$ as a \cst-subalgebra of $\C_0(\GG)$.
Therefore, if $\HH$ is compact, then $\I_\HH \in \C_0(\HH) \subset \C_0(\GG)$.

Conversely, assume $\I_\HH \in \C_0(\GG)$. 
Then by Corollary \ref{cor:surjecC0}, $\I_{\HH} = \pi(\I_\HH) \in \C_0(\HH)$, hence $\HH$ is compact.
\end{proof}

Finally we record one more fact, obvious in the classical context.

\begin{proposition}
Let $\GG$ be a locally compact quantum group and $\HH$ a discrete   quantum group. If $\HH$ is an open subgroup of $\GG$ then $\GG$ is a discrete quantum group.
\end{proposition}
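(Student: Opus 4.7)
My plan is to work on the dual side, where the statement becomes a question about unitality of $\C_0(\hGG)$. Since $\HH$ is discrete, $\hHH$ is compact, which is equivalent to $\C_0(\hHH)$ being unital, i.e., $\I_{\hHH}\in\C_0(\hHH)$. To conclude $\GG$ is discrete, it suffices to show $\I_{\hGG}\in\C_0(\hGG)$, so the goal is to transport the unit along the natural embedding coming from the subgroup inclusion.

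For this I will use the combination of Theorem \ref{Vaescl} and Remark \ref{redinj}. Since $\HH$ is an open quantum subgroup of $\GG$, Theorem \ref{Vaescl} shows it is also closed in the sense of Vaes, providing a unital injective normal $^*$-homomorphism $\gamma:\Linf(\hHH)\to\Linf(\hGG)$. Then Remark \ref{redinj} strengthens this (precisely using the openness hypothesis) by asserting $\gamma(\C_0(\hHH))\subseteq\C_0(\hGG)$ rather than merely landing in $\M(\C_0(\hGG))$.

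Putting these together yields the conclusion almost immediately: $\gamma$ is unital, so $\gamma(\I_{\hHH})=\I_{\hGG}$, and by Remark \ref{redinj} this element lies in $\C_0(\hGG)$. Hence $\C_0(\hGG)$ is unital, so $\hGG$ is compact and $\GG$ is discrete. The substantive content is already encoded in Theorem \ref{Vaescl} and Remark \ref{redinj}; once the compatibility $\gamma(\C_0(\hHH))\subseteq\C_0(\hGG)$ is granted, no further obstacle arises and the argument is essentially a one-liner on the dual side.
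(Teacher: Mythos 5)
Your argument is correct and is essentially the paper's own proof: the paper also dualizes, uses Remark \ref{redinj} to view $\C(\hHH)=\C_0(\hHH)$ inside $\C_0(\hGG)$, and deduces compactness of $\hGG$ (hence discreteness of $\GG$) from unitality. Your step $\gamma(\I_{\hHH})=\I_{\hGG}$ via the \wst-level unitality of $\gamma$ is just a slightly more explicit way of transporting the unit than the paper's appeal to non-degeneracy of the embedded subalgebra.
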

\begin{proof}
Using Remark \ref{redinj} we may view $\C_0(\hat\HH) = \C(\hat\HH)$ as a nondegenerate subalgebra of $\C_0(\hat\GG)$. Since $\C(\hat\HH)$ is unital, $\hat\GG$ is compact and hence $\GG$ is discrete.
\end{proof}

\section{Representation theoretic characterization of open quantum subroups} \label{Sec:BKLS}
The goal of this section is to prove a quantum version of the classical result of Bekka, Kaniuth, Lau and Schlichting \cite{BKLS}
which gives a characterization of open subgroups as those closed subgroups whose full group $\cst$-algebras embed injectively in the full group $\cst$-algebra of the ambient group.
This in particular links the representation theory of a locally compact quantum group with its open quantum subgroups and leads a way to a simpler picture of the induction theory (see \cite{rief1} and \cite{KKSinduced}).

We first record a disintegration result of the Haar weight through an open quantum subgroup (see \cite[Proposition 2.4]{Fima} for an analogous result in the discrete setting).
Let $\HH$ be an open quantum subgroup of $\GG$.
Recall from \eqref{HactG} and Corollary \ref{standardhomog} that the canonical action $\alpha$ of $\HH$ on $\GG$ is defined by the formula
\[
\alpha(x) = \Delta^\GG(x)(\I\otimes \I_\HH) \hspace{0.5cm}, x\in \Linf(\GG).
\]

\begin{proposition}\label{quo-weight}
Let $\GG$ be a locally compact quantum group, and let $\HH$ be an open quantum subgroup of $\GG$.
Then there exists a unique n.s.f. weight $\theta^{\GG/\HH}$ on $\Linf(\GG/\HH)$ such that
\[
(\theta^{\GG/\HH}\otimes\varphi^{\HH})\circ\alpha = \varphi^\GG .
\]
\end{proposition}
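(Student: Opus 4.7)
The identity encodes a disintegration of the left Haar weight $\varphi^\GG$ along the fibres of the quotient map $\GG\to\GG/\HH$, with fibre weight $\varphi^\HH$ and base weight $\theta^{\GG/\HH}$. My strategy would be to realise $\theta^{\GG/\HH}$ as the weight through which $\varphi^\GG$ factors via the canonical operator-valued weight attached to the action $\alpha$ and the left-invariant weight $\varphi^\HH$, and then apply Haagerup's factorisation theorem for weights through operator-valued weights.

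First I would introduce $E\colon\Linf(\GG)_+\to \widehat{\Linf(\GG/\HH)_+}$ by $E(x) := (\id\otimes\varphi^\HH)(\alpha(x))$. By Theorem \ref{Vaescl} and Corollary \ref{standardhomog}, $\alpha$ is the canonical right action of the closed subgroup $\HH$ on $\Linf(\GG)$, and in particular satisfies $(\alpha\otimes\id)\circ\alpha = (\id\otimes\Delta^\HH)\circ\alpha$. Combined with the left-invariance of $\varphi^\HH$ this identity gives $\alpha(E(x)) = E(x)\otimes\I$, so $E$ takes values in the extended positive cone of $\Linf(\GG)^\alpha = \Linf(\GG/\HH)$. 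The bimodule property $E(a^*xa) = a^*E(x)a$ for $a\in\Linf(\GG/\HH)$ follows from $\alpha(a) = a\otimes\I$, while normality and faithfulness are inherited from $\varphi^\HH$ and the injectivity of $\alpha$. Hence $E$ is a normal faithful operator-valued weight from $\Linf(\GG)$ to $\Linf(\GG/\HH)$.

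Next I would verify semifiniteness of $E$. A useful starting point is that for $y\in\Linf(\HH) = \I_\HH\Linf(\GG)$ one has $\alpha(y) = \Delta^\GG(y)(\I\otimes\I_\HH) = (\I_\HH\otimes\I_\HH)\Delta^\GG(y)(\I_\HH\otimes\I_\HH) = \Delta^\HH(y)$, using Proposition \ref{delta(P)}, the centrality of $\I_\HH$ and $y = y\I_\HH$. Left-invariance of $\varphi^\HH$ on $\HH$ then gives $E(y) = \varphi^\HH(y)\I_\HH$, so $E$ is finite on $\mathcal{M}_{\varphi^\HH}^+$. To extend this initial cone to a weak$^*$-dense positive cone of $\Linf(\GG)$ on which $E$ is bounded, I would combine the bimodule property with the von Neumann Podle\'s nondegeneracy condition for $\alpha$ (namely that the span of $\alpha(\Linf(\GG))(\Linf(\GG)\otimes\I)$ is weak$^*$-dense in $\Linf(\GG)\wot\Linf(\HH)$), exploiting centrality of $\I_\HH$ to propagate $\varphi^\HH$-integrability from $\Linf(\HH)$ across the ambient algebra.

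Finally, using \eqref{delta(sigma)}, centrality of $\I_\HH$ combined with $\sigma^{\varphi^\GG}_t(\I_\HH)=\I_\HH$, and Theorem \ref{lemh} (giving $\sigma^{\varphi^\GG}_t|_{\Linf(\HH)} = \sigma^{\varphi^\HH}_t$), a direct computation yields $\alpha\circ\sigma^{\varphi^\GG}_t = (\tau^\GG_t\otimes\sigma^{\varphi^\HH}_t)\circ\alpha$, so in particular $\sigma^{\varphi^\GG}_t$ preserves $\Linf(\GG/\HH)$. This is exactly the compatibility required by Haagerup's theorem on factoring weights through n.s.f.\ operator-valued weights, which therefore produces a unique n.s.f.\ weight $\theta^{\GG/\HH}$ on $\Linf(\GG/\HH)$ with $\varphi^\GG = \theta^{\GG/\HH}\circ E$; unfolding the definition of $E$ this is exactly the stated equality, and uniqueness is immediate from the n.s.f.\ character of $E$. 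The genuinely technical point in this scheme is the semifiniteness of $E$: classically a triviality coming from integrating compactly supported functions over $H$-cosets, but here requiring real care through the Podle\'s density condition and the central group-like projection $\I_\HH$.
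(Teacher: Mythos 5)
The decisive flaw is the last step. The theorem you invoke does not exist in the form you use it: Haagerup's factorization results go from a \emph{pair} of weights $\varphi$ on $M$ and $\theta$ on a subalgebra $P$ with $\sigma^\varphi_t|_P=\sigma^\theta_t$ to the existence of \emph{some} operator-valued weight $E'$ with $\varphi=\theta\circ E'$; they do not assert that a given $\varphi$ factors through a \emph{given} $E$ as soon as $\sigma^\varphi_t$ preserves $P$ (or commutes with $E$). That statement is false already in the commutative tracial case: take $M=L^\infty([0,1]^2)$, $P=L^\infty([0,1])\ot\I$, $E$ integration in the second variable and $\varphi$ integration against the density $1+y$; all modular groups are trivial and $P$ is invariant, yet $\varphi\neq\theta\circ E$ for any $\theta$. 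Hence your verification of $\alpha\circ\sigma^{\varphi^\GG}_t=(\tau^\GG_t\ot\sigma^{\varphi^\HH}_t)\circ\alpha$ cannot by itself produce $\theta^{\GG/\HH}$. The correct criterion for factoring a given weight through a given n.s.f.\ operator-valued weight is of Connes-cocycle type ($[D\varphi^\GG:D(\theta_0\circ E)]_t\in\Linf(\GG/\HH)$ for an auxiliary n.s.f.\ weight $\theta_0$ on $\Linf(\GG/\HH)$), i.e.\ a genuine relative-invariance property of $\varphi^\GG$ with respect to $\alpha$, and the ingredient which secures it here is the behaviour of the modular element: $\alpha(\delta^\GG)=\Delta^\GG(\delta^\GG)(\I\ot\I_\HH)=\delta^\GG\ot\delta^\HH$, using $\delta^\HH=\I_\HH\delta^\GG$ from Theorem \ref{lemh}. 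This computation is precisely the content of the paper's proof, which then appeals to \cite[Proposition 8.7]{KustermansInduced}, where existence and uniqueness of the disintegrating weight are established exactly under this hypothesis; it appears nowhere in your argument, so the existence of $\theta^{\GG/\HH}$ is not obtained.

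A secondary but real gap is semifiniteness of $E=(\id\ot\varphi^\HH)\circ\alpha$. You correctly flag it as the technical point, but the proposed remedy (bimodule property plus the Podle\'s condition plus centrality of $\I_\HH$) is a plan, not a proof: the bimodule relation only propagates integrability by elements of $\Linf(\GG/\HH)$, and it is not clear that $\Linf(\GG/\HH)\,\Linf(\HH)_+\,\Linf(\GG/\HH)$ generates a weak$^*$-dense hereditary cone of $\Linf(\GG)_+$. Semifiniteness here is equivalent to integrability of the translation action of $\HH$ on $\GG$, which is exactly where openness enters (compare the properness statement of Lemma \ref{propal}, or the machinery in \cite{KustermansInduced}), and it needs an actual argument. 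The earlier parts of your construction are fine: $E$ lands in the extended positive cone of the fixed-point algebra by left invariance of $\varphi^\HH$, it is normal and faithful, and $E(y)=\varphi^\HH(y)\I_\HH$ for $y\in\Linf(\HH)_+$; but without the two missing pieces above the proposal does not prove the proposition.
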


\begin{proof}
Applying Theorem \ref{lemh} we get
\[
\alpha(\delta^\GG)  = \Delta^\GG(\delta^\GG)(\I\otimes \I_\HH) = \delta^\GG\otimes \delta^\GG \I_\HH = \delta^\GG\otimes\delta^{\HH} .
\]
Hence the existence of $\theta^{\GG/\HH}$ follows from \cite[Proposition 8.7]{KustermansInduced}.
\end{proof}

In next theorem, we prove the forward implication of the Bekka-Kaniuth-Lau-Schlichting's characterization of open subgroups in the quantum setting,
namely, we prove open quantum subgroups have the above mentioned embedding property of full \cst-algebra.

\begin{theorem}\label{main}
Let $\GG$ be a coamenable locally compact quantum group, and let $\HH$ be an open quantum subgroup of $\GG$.
Then the dual morphism $\hat\pi\in\Mor(\C_0^u(\hat\HH), \C_0^u(\hat\GG))$ is injective and $\hat\pi(\C_0^u(\hat\HH))\subset \C_0^u(\hat\GG)$.
\end{theorem}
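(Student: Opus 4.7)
The containment $\hat\pi(\C_0^u(\hat\HH)) \subseteq \C_0^u(\hat\GG)$ has already been established in Lemma \ref{emded}, so the plan is to prove the injectivity of $\hat\pi$. My approach proceeds in three stages.

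First, I would transfer coamenability from $\GG$ to $\HH$. Coamenability of $\GG$ provides a counit character $\epsilon_\GG$ on $\C_0(\GG) = \C_0^u(\GG)$, which extends uniquely to $\M(\C_0(\GG))$ as a unital $*$-homomorphism; by Lemma \ref{thmonP}, $\I_\HH \in \M(\C_0(\GG))$, so $\epsilon_\GG(\I_\HH)$ is defined. Applying $\epsilon_\GG \otimes \id$ to the identity
\[
\Delta^\GG(\I_\HH)(\I \otimes \I_\HH) = \I_\HH \otimes \I_\HH
\]
from Proposition \ref{delta(P)} and using the left counit relation $(\epsilon_\GG \otimes \id) \circ \Delta^\GG = \id$, one obtains $\I_\HH = \epsilon_\GG(\I_\HH)\,\I_\HH$, forcing $\epsilon_\GG(\I_\HH) = 1$ since $\I_\HH \neq 0$. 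Under the identification $\C_0(\HH) = \I_\HH \C_0(\GG)$ from Corollary \ref{cor:surjecC0}, the prescription $\epsilon_\HH(\I_\HH y) := \epsilon_\GG(y)$ is then well-defined (if $\I_\HH y = \I_\HH y'$, the multiplicativity of $\epsilon_\GG$ together with $\epsilon_\GG(\I - \I_\HH) = 0$ yields $\epsilon_\GG(y - y') = 0$) and defines a character on $\C_0(\HH)$ satisfying the counit relation for $\Delta^\HH$; thus $\HH$ is also coamenable.

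Second, I would combine this with the injective morphism $\gamma \in \Mor(\C_0(\hat\HH), \C_0(\hat\GG))$ coming from Theorem \ref{Vaescl} and Remark \ref{redinj}, which fits into the factorization $\Lambda^{\hat\GG} \circ \hat\pi = \gamma \circ \Lambda^{\hat\HH}$. Since $\gamma$ is injective, this immediately yields $\ker \hat\pi \subseteq \ker \Lambda^{\hat\HH}$, so the problem reduces to showing that $\hat\pi$ is injective on $\ker \Lambda^{\hat\HH}$.

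The hard part is precisely this last step, where the coamenability hypothesis must be used in an essential way; note that the factorization alone only controls $\hat\pi$ modulo $\ker \Lambda^{\hat\HH}$, which can be non-trivial (this is where classical BKLS invokes induced representations). I would attack it via the explicit formula
\[
\hat\pi\bigl((\id \otimes \omega) \Ww^\HH\bigr) = (\id \otimes \pi_*(\omega)) \Ww^\GG, \quad \omega \in \Lone(\HH),
\]
from the proof of Lemma \ref{emded}, together with the observation that the injection $\pi_*: \Lone(\HH) \hookrightarrow \Lone(\GG)$ admits a natural linear left inverse given by restriction of functionals from $\Linf(\GG)$ to $\Linf(\HH) = \I_\HH \Linf(\GG)$. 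The heart of the argument is to show that this slice-wise left inverse extends to a bounded linear map $\tau: \C_0^u(\hat\GG) \to \C_0^u(\hat\HH)$ with $\tau \circ \hat\pi = \id$; the coamenability of $\GG$ (equivalently the amenability of $\hat\GG$) is what controls the relevant universal $C^*$-norms and ensures this extension is well-defined. The existence of such a bounded left inverse immediately forces $\hat\pi$ to be injective.
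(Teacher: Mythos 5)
The containment and the reduction via $\gamma$ are fine, but your third step is where the theorem actually lives, and there you have asserted rather than proved the key claim. You say that the slice-wise left inverse $(\id\otimes\pi_*(\omega))\Ww^\GG \mapsto (\id\otimes\omega)\Ww^\HH$ "extends to a bounded linear map $\tau:\C_0^u(\hat\GG)\to\C_0^u(\hat\HH)$" and that "the coamenability of $\GG$ \ldots controls the relevant universal $C^*$-norms and ensures this extension is well-defined." No mechanism is offered for this. The universal norm on $\C_0^u(\hat\HH)$ is computed over all unitary representations of $\HH$, so bounding $\tau$ amounts to showing that every representation of $\HH$ (equivalently, every positive-definite element of $\C_0(\HH)$, or every state of $\C_0^u(\hat\HH)$) is controlled by representations of $\GG$ -- i.e.\ extends, in an appropriate sense, along the inclusion $\HH\subset\GG$. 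That extension statement is precisely the nontrivial content of the Bekka--Kaniuth--Lau--Schlichting-type result; asserting the existence of $\tau$ is essentially a restatement of the injectivity you want, not a proof of it. (Classically this is where induced representations enter, as you note yourself, and nothing in your proposal replaces that input.) Your first step, transferring coamenability from $\GG$ to $\HH$, is correct but does no work here: what would make $\ker\Lambda^{\hat\HH}$ trivial is coamenability of $\hat\HH$ (amenability of $\HH$), not of $\HH$, so it is a red herring for the remaining difficulty.

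For comparison, the paper closes exactly this gap by a positive-definiteness argument: it builds a dense set $\mathcal{C}\subset\Lone_{\#}(\GG)$ of functionals of the form $a\varphi^\GG$ with $f^{\#}$ of the form $d\psi^\GG$, and then, for $x=(\id\otimes\omega_\zeta)(\mathcal{V})\in\C_0(\HH)\subset\C_0(\GG)$ coming from an arbitrary representation $\mathcal{V}$ of $\HH$, proves $f^{\#}*f(x)\geq 0$ by a computation that crucially uses the disintegration of the Haar weight $\varphi^\GG=(\theta^{\GG/\HH}\otimes\varphi^\HH)\circ\alpha$ from Proposition \ref{quo-weight} (this is the quantum substitute for inducing the representation of $\HH$ up to $\GG$). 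This shows positive-definite elements of $\C_0(\HH)$ remain positive definite in $\C_0(\GG)$, and only then does coamenability of $\GG$ enter, via \cite[Theorem 15]{DawsSalmi}, to convert this into surjectivity of $\hat\pi^*:\C_0^u(\hat\GG)^*\to\C_0^u(\hat\HH)^*$ and hence injectivity of $\hat\pi$. If you want to salvage your outline, the construction of your map $\tau$ (or of its predual) would have to be carried out by some analogue of this extension-of-positive-definite-functions argument; as written, the proposal leaves exactly that step unproved.
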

\begin{proof}
The inclusion $\hat\pi(\C_0^u(\hat\HH))\subset \C_0^u(\hat\GG)$ was proved in Lemma \ref{emded}. We thus need to prove the injectivity of $\hat{\pi}$. To this end we will use the notion of \emph{positive definite functions} on $\GG$, studied in \cite{DawsSalmi}. We first build a big enough family of `test-functionals', allowing us to verify positive-definiteness.

Recall that given $f\in\Lone_{\#}(\GG)$ we define $f^\#\in\Lone_{\#}(\GG)$ by
\[f^\#(x) = \overline{f(S^\GG (x)^*)} ,\;\; x \in \Dom (S^{\GG}).\]
By \cite[Proposition 4.4.4]{VaesPhD}, $\mathcal{N}_\varphi \cap \mathcal{N}_\psi$ is a core for both $\Lambda_\varphi$ and $\Lambda_\psi$.
For $x \in \mathcal{N}_\varphi \cap \mathcal{N}_\psi$ and ${m, n, k } \in \mathbb N$ we define
\[
x_{m, n, k} \,=\, \frac{mnk}{\pi^{\frac32}}\,\iiint\,e^{-m^2t_1^2 - n^2t_2^2 - k^2t_3^2}\,\sigma^\varphi_{t_1}(\sigma^\psi_{t_2}(\tau_{t_3}(y)))\,dt_1 \,dt_2 \,dt_3 .
\]
Then $x_{m, n, k} \in \mathcal{N}_\varphi \cap \mathcal{N}_\psi$, and
since the automorphism groups $\sigma^\varphi$, $\sigma^\psi$ and $\tau$ commute pairwise,
$x_{m, n, k}$ is analytic with respect to $\sigma^\varphi$, $\sigma^\psi$ and $\tau$.
Moreover, the set $\mathcal{F} := \{x_{m, n, k} : x \in \mathcal{N}_\varphi \cap \mathcal{N}_\psi, {m, n, k } \in \mathbb N\}$ is dense in $C_0(\GG)$.

Let $b, c \in \mathcal{F}$. Then by \cite[Lemma 3]{DawsSalmi} $b\sigma_{-i}(c)\varphi = b \varphi c \in \Lone_{\#}(\GG)$,
and
$$(b\sigma_{-i}(c)\varphi)^\# = (\tau_{\frac{i}{2}}(b) \varphi \tau_{-\frac{i}{2}}(c))\circ R = S(c) \psi S^{-1}(b) .$$
Since $\tau_z(x)$ is analytic with respect to $\sigma^\psi$ for all $z\in\mathbb C$ and $x\in\mathcal{F}$,
we conclude that
$$(b\sigma_{-i}(c)\varphi)^\# = S(c) \sigma_{-i}(S^{-1}(b)) \psi .$$
Hence, the set $\mathcal C := \{\omega \in \Lone_{\#}(\GG) : \omega = a\varphi \text{ for some } a\in C_0(\GG), \text{ and } \omega^\#= d\psi \text{ for some } d\in C_0(\GG)\}$
is dense in $\Lone_{\#}(\GG)$ with respect to the norm of $\Lone(\GG)$.
Since $\mathcal C$ is invariant under $\tau$, it follows by a similar argument to that of \cite[Lemma 3]{DawsSalmi} that $\mathcal C$
is dense in $\Lone_{\#}(\GG)$ with respect to its natural norm (i.e.\ the norm given by the maximum of $\Lone(\GG)$ norms of $f$ and $f^{\#}$).

Let then $f \in \mathcal C$, $f = a\varphi^\GG$ for some $a\in \C_0 (\GG)$. We have then for all $x \in \Dom(S)$, recalling that $\lambda>0$ denotes the scaling constant of $\GG$,
\[
\begin{split}
f^{\#}(x) &= \overline{\varphi^\GG( S^\GG (x)^*a)}  =\varphi^\GG(a^* S^\GG (x))=\varphi^\GG\circ S^\GG ( (S^\GG)^{-1}(a^* S^\GG (x))) \\&= \varphi^\GG\circ S^\GG  (x (S^\GG)^{-1}(a^*)) =  (\lambda^{\frac{i}{2}} (S^\GG)^{-1}(a^*)\psi^\GG)(x).
\end{split}
\]

By density and boundedness of the functionals involved the above formula holds for all $x \in \C_0(\GG)$. Now let $\mathcal{V}\in\M(\C_0(\HH)\otimes\cK(\sH))$ be a unitary representation of $\HH$ on a Hilbert space $\sH$, and let
$x = (\id\otimes\omega_{\zeta})(\mathcal{V}) \in \C_0(\HH) \subseteq \C_0(\GG)$
for a unit vector $\zeta\in H$. Then, for $f$ as above

\[
 \begin{split}
f^{\#}*f(x)
&= \lambda^{\frac{i}{2}}(\psi^\GG\otimes\varphi^\GG)(\Delta^\GG(x)( (S^\GG)^{-1}(a^*)\otimes a))\\
&=\lambda^{\frac{i}{2}} \psi^\GG((\id\otimes\varphi^\GG)(\Delta^\GG(x)(\I\otimes a)) (S^\GG)^{-1}(a^*))\\
& = \lambda^{\frac{i}{2}} \psi^\GG( (S^\GG)^{-1}( S^\GG ((\id\otimes\varphi^\GG)(\Delta^\GG(x)(\I\otimes a)))) (S^\GG)^{-1}(a^*)) \hspace{0.5cm} \text{ \Big(strong left inv. of $\varphi^\GG$\Big)}\\
& =  \lambda^{\frac{i}{2}} \psi^\GG( (S^\GG)^{-1}((\id\otimes\varphi^\GG)((\I\otimes x)\Delta^\GG(a))) (S^\GG)^{-1}(a^*))\text{ \Big(scaling constant relation  \Big)} \\ \hspace{0.5cm}
& = \varphi^\GG(a^*(\id\otimes\varphi^\GG)((\I\otimes x)\Delta^\GG(a)))\\
& = (\varphi^\GG\otimes\varphi^\GG)((a^*\otimes x)\Delta^\GG(a))\\
& = (\varphi^\GG\otimes\varphi^\HH)((a^*\otimes x)\alpha (a))   \hspace{2.8cm} \Big(x\in\C_0(\HH)\Rightarrow(\I\otimes x)\Delta^\GG(a) = (\I\otimes x)\alpha(a)\Big) \\
& = (\varphi^\GG\otimes\varphi^\HH\otimes\omega_{\zeta})((a^*\otimes \I)\mathcal{V}_{23}\alpha (a)_{12})\\
& = (\theta^{\GG/\HH}\otimes\varphi^\HH\otimes\varphi^\HH\otimes\omega_{\zeta})(\alpha(a^*)_{12}\mathcal{V}_{34}((\alpha\otimes\id)(\alpha(a)))_{123})  \hspace{2cm} \text{ \Big(by Proposition \ref{quo-weight}\Big)}\\
& = (\theta^{\GG/\HH}\otimes\varphi^\HH\otimes\varphi^\HH\otimes\omega_{\zeta})(\alpha(a^*)_{12}\mathcal{V}_{34}((\id\otimes\Delta^\HH)(\alpha(a)))_{123})\\
& = (\theta^{\GG/\HH}\otimes\varphi^\HH\otimes\varphi^\HH\otimes\omega_{\zeta})(\alpha(a^*)_{12}\mathcal{V}_{24}^*(\id\otimes\Delta^\HH\otimes\id)(\mathcal{V}_{23}\alpha(a)_{12}))
\text{ \Big(as  } \mathcal{V} \text{ is a representation \Big)}\\
& = (\theta^{\GG/\HH}\otimes\omega_{\zeta})((\id\otimes\varphi^\HH\otimes\id)(\mathcal{V}_{23}\alpha(a)_{12})^*(\id\otimes\varphi^\HH\otimes\id)(\mathcal{V}_{23}\alpha(a)_{12}))\geq 0 .
 \end{split}
\]

By density of $\mathcal{C}$ in $\Lone_{\#}(\GG)$ we can deduce now that a positive definite element $x\in\C_0(\HH)$ is positive definite when viewed as an element of $\C_0(\GG)$.  Let us consider  $\hat\pi \in\Mor( \C_0^u(\hat\HH),\C_0^u(\hat\GG))$.
Since $\GG$ is coamenable \cite[Theorem 15]{DawsSalmi} implies the adjoint map $\hat\pi^*: \C_0^u(\hat\GG))^*\rightarrow\C_0^u(\hat\HH))^*$ is surjective,
hence $\hat\pi$ is injective.
\end{proof}

We now prepare to prove the converse of the above theorem.
First recall from \cite[Definition 1.3, Theorem 1.6]{KV} that
\[
\mathcal{G} = \{\alpha\omega : \alpha\in(0,1), \,\omega \in \C_0(\GG)^*,\, 0\leq\omega(x)\leq \varphi^\GG(x) \textrm{ for all } x\in \C_0(\GG)^+\}
\]
is a directed set, and
\[
\varphi^\GG(x) = \lim_{\omega\in\mathcal{G}}\omega(x)
\]
for all $x\in\C_0(\GG)^+$.
We also need to use the following notion of \emph{properness} for quantum group actions which was studied independently by Kustermans \cite{KusterWeights} (for quantum groups) and Rieffel \cite{rief} (for  classical groups).

\begin{definition}\label{properaction}
Let $\alpha\in\Mor(\sA,\sA\otimes\C_0(\HH))$ be an action of a locally compact quantum group
$\HH$ on a $\C^*$-algebra $\sA$; recall   that it satisfies the Podle\'s condition
\[
\alpha(\sA)(\I\otimes\C_0(\HH)) = \sA\otimes\C_0(\HH) .
\]
We say
\begin{itemize}
\item[(i)]
$a\in \sA^+$ is $\alpha$-integrable if the net
\[
\big((\id\otimes\omega)\alpha(a)\big)_{\omega\in\mathcal{G}}
\]
converges strictly to an element $b\in\M(\sA)$;
\item[(ii)]
the action $\alpha$ is proper if the set
\[
\mathcal{P}_\alpha = \{a\in \sA^+: a \textrm{ is } \alpha-\textrm{integrable}\}
\]
is dense in $A^+$.
\end{itemize}
\end{definition}
It follows from the results of \cite[Section 2]{rief} that $\mathcal{P}_\alpha$ is a hereditary cone.

Classically an action of a closed subgroup on the ambient locally compact group is  always proper. Below we note this is also true for open quantum subgroups.

\begin{lemma}\label{propal}
Let $\GG$ be a locally compact quantum group and $\HH\leq\GG$ an open quantum subgroup.
Then the action
of $\HH$ on $\C_0(\GG)$ is proper.
\end{lemma}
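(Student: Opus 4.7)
The plan is to exploit the disintegration identity $\varphi^{\GG} = (\theta^{\GG/\HH} \otimes \varphi^{\HH}) \circ \alpha$ from Proposition~\ref{quo-weight}, combined with the fact that the support projection $\I_{\HH}$ lies in $\M(\C_0(\GG))$ (which follows from Proposition~\ref{delta(P)} together with Lemma~\ref{thmonP}(1), since $\I_{\HH}$ is a group-like projection), to produce a norm-dense cone of $\alpha$-integrable elements in $\C_0(\GG)^+$.

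First, I would rewrite $(\id \otimes \omega)\alpha(a)$ in a convenient form. Using $\alpha(a) = \Delta^{\GG}(a)(\I \otimes \I_{\HH})$ and centrality of $\I_{\HH}$, one obtains
\[
(\id \otimes \omega)\alpha(a) \;=\; (\id \otimes \tilde{\omega})\Delta^{\GG}(a),
\]
where $\tilde{\omega}(x) := \omega(\I_{\HH} x)$ extends $\omega$ from $\C_0(\HH) = \I_{\HH} \C_0(\GG)$ to $\C_0(\GG)$ by zero on the complement. Thus $\alpha$-integrability of $a$ becomes strict convergence in $\M(\C_0(\GG))$ of the net $((\id \otimes \tilde{\omega})\Delta^{\GG}(a))$ as $\tilde{\omega}$ ranges over positive functionals on $\C_0(\GG)$ dominated by $\varphi^{\GG}(\I_{\HH}\,\cdot\,)$.

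Second, I would select candidate integrable elements of the form $a = b^*b$, where $b$ lies in a smooth dense subalgebra of $\C_0(\GG)$ analogous to the set $\mathcal{F}$ of Theorem~\ref{main} (built by averaging against the modular, opposite modular and scaling groups, intersected with $\mathcal{N}_{\varphi^{\GG}} \cap \mathcal{N}_{\psi^{\GG}}$). For such $a$, Proposition~\ref{quo-weight} ensures that the formal slice $(\id \otimes \varphi^{\HH})\alpha(a)$ defines a well-posed $\theta^{\GG/\HH}$-integrable element of $\Linf(\GG/\HH)^+$, providing the natural candidate for the limit $b\in \M(\C_0(\GG))$ required by Definition~\ref{properaction}(i).

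Third, the central step is to promote the weak$^*$-convergence coming from the monotone approximation $\tilde{\omega} \uparrow \varphi^{\GG}(\I_{\HH}\,\cdot\,)$ to strict convergence in $\M(\C_0(\GG))$. Here I would use the Podle\'s density condition
\[
\alpha(\C_0(\GG))(\I \otimes \C_0(\HH)) \;=\; \C_0(\GG) \otimes \C_0(\HH),
\]
which follows from the usual coproduct density together with the fact that $\I_{\HH} \in \M(\C_0(\GG))$. This lets one absorb any left/right multiplication by $c \in \C_0(\GG)$ into factors of the form $\alpha(x)(\I \otimes h)$ with $x \in \C_0(\GG)$, $h \in \C_0(\HH)$, reducing the question to norm-convergence of $\omega(hh'h^*)$-type expressions where $\omega \in \mathcal{G}_{\HH}$ increases to $\varphi^{\HH}$ on $\mathcal{N}_{\varphi^{\HH}}$.

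Finally, density of the resulting cone of $\alpha$-integrable elements in $\C_0(\GG)^+$ follows from the semifiniteness of $\varphi^{\GG}$ on $\C_0(\GG)$ and standard polarisation/approximation arguments. The main obstacle is precisely the third step: making sure that the slice integrals actually live in $\M(\C_0(\GG))$ (not merely in $\Linf(\GG)$) and that the approximation is strict; this is the point where the openness hypothesis is indispensable, since it places $\I_{\HH}$ in $\M(\C_0(\GG))$ and keeps $\alpha(\C_0(\GG))$ inside $\M(\C_0(\GG) \otimes \C_0(\HH))$ in a sufficiently strong way.
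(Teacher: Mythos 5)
Your opening reduction is sound: writing $(\id\otimes\omega)\alpha(a)=(\id\otimes\tilde\omega)\Delta^\GG(a)$ with $\tilde\omega(\cdot)=\omega(\I_\HH\,\cdot)$ is correct, and $\tilde\omega\le\varphi^\GG$ because $\varphi^\HH=\varphi^\GG|_{\I_\HH\Linf(\GG)}$ (Theorem \ref{lemh}). But the heart of the matter --- strict convergence of the slice net in $\M(\C_0(\GG))$ --- is precisely the step you leave as a sketch, and the sketch does not close. Proposition \ref{quo-weight} only says that the slice $(\id\otimes\varphi^\HH)\alpha(a)$ is $\theta^{\GG/\HH}$-integrable; it gives neither its boundedness (that would rather follow from the domination $\alpha(a)=(\I\otimes\I_\HH)\Delta^\GG(a)(\I\otimes\I_\HH)\le\Delta^\GG(a)$, whence the slice is dominated by $\varphi^\GG(a)\I$), nor membership in $\M(\C_0(\GG))$, nor strictness of the approximation. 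Moreover the proposed ``absorption'' via the Podle\'s condition does not reduce the problem to scalar convergence of expressions $\omega(hh'h^*)$: after approximating $c\otimes e$ by sums $\sum_i\alpha(x_i)(\I\otimes h_i)$, the resulting terms $\alpha(x_i)(\I\otimes h_i)\alpha(a)(\I\otimes h_j)^*\alpha(x_j)^*$ still carry $\alpha$-legs in the $\HH$-variable, so slicing by $\omega$ does not factor the way you indicate; carrying this out would essentially amount to reproving that the comultiplication itself is a proper action, which is a genuinely nontrivial weight-theoretic fact. The smoothing family modelled on $\mathcal{F}$ from Theorem \ref{main} plays no role in overcoming this.

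The paper sidesteps the analysis entirely: by \cite[Result 2.4 and Lemma 1.21]{KV} the comultiplication $\Delta^\GG$ is already a proper action of $\GG$ on $\C_0(\GG)$, and since under the identification $\C_0(\HH)=\I_\HH\C_0(\GG)$ one has $\alpha(x)=\Delta^\GG(x)(\I\otimes\I_\HH)$ (Corollary \ref{standardhomog}), properness of $\alpha$ is inherited from properness of $\Delta^\GG$ by compressing with the central multiplier projection $\I_\HH$, invoking \cite[Result 1.19, part (3)]{KV}. If you want your argument to stand, the missing ingredient is exactly a proof or citation of the strict convergence statement for the coproduct action; once that is in hand, your passage from $\omega$ to $\tilde\omega$ (together with the domination above) does finish the job, and the rest of your construction becomes unnecessary.
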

\begin{proof}
By \cite[Result 2.4 and Lemma 1.21]{KV} the comultiplication $\Delta^\GG$ is a proper action of $\GG$ on $\C_0(\GG)$.
Under the identification $\C_0(\HH) = \I_\HH\C_0(\GG)$, $\alpha(x) = \Delta^\GG(x)(\I\otimes \I_\HH)$ for all $x \in \C_0(\GG)$ (essentially by Corollary \ref{standardhomog}), so properness of $\alpha$ follows from \cite[Result 1.19, part (3)]{KV}.
\end{proof}

We need another lemma characterising openness of a quantum subgroup via a corresponding group-like projection in a slightly different manner. It should be compared to Theorem \ref{minimal}.

\begin{lemma}\label{lift}
Let $\HH$ be a locally compact quantum group identified with a closed (in the sense of Woronowicz) subgroup of $\GG$ via (the reduced morphism) $\pi_r\in\Mor(\C_0(\GG),\C_0(\HH))$. Then $\HH$ is open in the sense of Definition \ref{defm} if and only if there exists  a projection $P\in\M(\C_0(\GG))$ such that for all $x \in \C_0(\GG)$
\[
x\in\ker \pi_r\iff xP = 0 .
\]
\end{lemma}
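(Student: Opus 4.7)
The plan is to show that the hypothesised $P$ is a group-like projection in $\GG$ and then apply Theorem \ref{1-1}. The forward direction is essentially the construction of the open quantum subgroup: taking $P := \I_\HH$, Lemma \ref{thmonP}(1) gives $P \in \M(\C_0(\GG))$, and by Proposition \ref{vNC} together with the identification $\Linf(\HH) \cong \I_\HH \Linf(\GG)$ the reduced morphism $\pi_r$ is simply the restriction to $\C_0(\GG)$ of $x \mapsto x\I_\HH$, so $x \in \ker \pi_r \iff x\I_\HH = 0$.

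For the backward direction, I would first argue that $P$ is central in $\Linf(\GG)$. Since $\pi_r$ is $*$-preserving, the hypothesis is symmetric: $xP = 0 \iff Px = 0$ for $x \in \C_0(\GG)$. Applied to $(1-P)xP \in \C_0(\GG)$, which satisfies $P \cdot (1-P)xP = 0$, this yields $(1-P)xP \cdot P = (1-P)xP = 0$, whence $xP = PxP$; by symmetry $Px = PxP$, so $P$ commutes with $\C_0(\GG)$ and, by nondegeneracy, with $\M(\C_0(\GG))$ and $\Linf(\GG)$. The same nondegeneracy argument gives the multiplier version $\pi_r(y) = 0 \iff yP = 0$ for $y \in \M(\C_0(\GG))$. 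Taking $y = 1 - P$ yields $\pi_r(P) = 1$; combined with the intertwining $\pi_r \circ R^\GG = R^\HH \circ \pi_r$ (valid for any morphism, \cite[Proposition 5.45]{KV}) this gives $\pi_r(P - PR^\GG(P)) = 0$, and the multiplier kernel condition then forces $(P - PR^\GG(P))P = P - PR^\GG(P) = 0$, so $P \le R^\GG(P)$; by symmetry $R^\GG(P) = P$.

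An analogous argument on tensor products gives $(\pi_r \otimes \pi_r)(y) = 0 \iff y(P \otimes P) = 0$ on $\M(\C_0(\GG) \otimes \C_0(\GG))$. Applied to $y = \Delta^\GG(P)(1 \otimes P) - P \otimes P$, whose image under $\pi_r \otimes \pi_r$ vanishes since $\pi_r(P) = 1$ and $\Delta^\HH(1) = 1 \otimes 1$, this yields $\Delta^\GG(P)(P \otimes P) = P \otimes P$. From here, using centrality of $P$ and $R^\GG(P) = P$, the proof of Proposition \ref{delta(P)} applies verbatim—the only delicate input being the standard Kustermans--Vaes identity $J^\psi P J^\psi = J^{\hh{\psi}} P J^{\hh{\psi}} = P$ for a central projection fixed by $R^\GG$—producing the group-like identity $\Delta^\GG(P)(1 \otimes P) = P \otimes P$. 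Theorem \ref{1-1} now realises $P$ as the support of an open quantum subgroup $\HH' \subset \GG$, and the \cst-isomorphism $\C_0(\HH) \cong \C_0(\GG)P$ coming from $\pi_r(x) \leftrightarrow xP$ identifies $\pi_r$ with the defining morphism of $\HH'$, so $\HH \cong \HH'$ as quantum subgroups of $\GG$ and hence $\HH$ is open. The main obstacle is therefore the invocation of the modular-conjugation step inside Proposition \ref{delta(P)}; everything else reduces to a kernel chase through the hypothesis on $P$.
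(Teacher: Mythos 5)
Your proof is correct, but for the backward implication it takes a genuinely different route from the paper's. The paper, after essentially the same kernel-chasing argument establishing $\pi_r(P)=\I$ and centrality of $P$, never shows that $P$ is group-like: it invokes \cite[Theorem 3.6]{DKSS} to identify $\C_0(\HH)$ with $\sA=P\C_0(\GG)$ via $\pi_r(x)\leftrightarrow Px$, notes that under this identification $\psi^{\HH}=\psi^{\GG}|_{\sA}$, and passes to the GNS representations of these weights to extend $x\mapsto Px$ to the normal surjection $\Linf(\GG)\to\Linf(\HH)$ demanded by Definition \ref{defm}. You instead upgrade the kernel condition to multipliers and to tensor products, deduce $R^{\GG}(P)=P$ (via $\pi_r\circ R^{\GG}=R^{\HH}\circ\pi_r$) and $\Delta^{\GG}(P)(P\ot P)=P\ot P$, and then run the modular-conjugation argument of Proposition \ref{delta(P)} -- which indeed uses only centrality, $R$-invariance and that identity, so it does apply verbatim -- to obtain the full group-like relation, finally quoting Theorem \ref{1-1}. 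Your route re-uses the machinery of Sections 2 and 4 and makes the group-like projection explicit at this point, at the cost of the detour through $R^{\GG}(P)=P$ and the $J^{\psi},J^{\hat\psi}$ trick, which the paper's direct weight-restriction argument avoids. One step you treat too lightly: in your last sentence the isomorphism $\C_0(\HH)\cong\C_0(\GG)P$ already requires $\pi_r(\C_0(\GG))=\C_0(\HH)$, i.e.\ exactly the Woronowicz-closedness hypothesis through \cite[Theorem 3.6]{DKSS}, and the resulting reduced-level isomorphism intertwining the coproducts must still be promoted to a von Neumann algebra isomorphism $P\Linf(\GG)\cong\Linf(\HH)$ (it matches $\psi^{\GG}|_{P\C_0(\GG)}$ with $\psi^{\HH}$ up to a scalar by invariance and uniqueness, and one passes to the GNS representations) before Definition \ref{defm} is verified for $\HH$ itself rather than for the abstract subgroup $\HH'$ produced by Theorem \ref{1-1}. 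This is precisely where the paper spends its remaining effort, so your argument does not dispense with that identification; it only postpones it.
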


\begin{proof}
It was shown in Lemma \ref{thmonP} that if $\HH \leq \GG$ is an open quantum subgroup, then $\I_\HH$
satisfies the required properties.

For the converse, first note that since $(\I-P) P = 0$, by the assumption we have $\pi_r(\I-P) = 0$, i.e. $\pi_r(P) =\I$.
This gives
\[
\pi_r((\I-P)x) = \pi_r(x) - \pi_r(P)\pi_r(x) = 0
\]
for all $x\in\C_0(\GG)$, and therefore by the assumption
$(\I-P)xP = 0$. Equivalently, $xP =PxP$, and we get
\[
Px = (x^*P)^*= (Px^*P)^* = PxP = xP
\]
for all $x\in\C_0(\GG)$. Hence $P$ is a central element.

Thus, since $\pi(\C_0(\GG)) = \C_0(\HH)$ (cf. \cite[Theorem 3.6]{DKSS}) we may identify $\C_0(\HH)$ with $\sA = P\C_0(\GG)$.
Under this identification $\pi_r(x) = Px$ for all $x\in\C_0(\GG)$, and
$\psi^\HH = \psi^\GG|_{\sA}$.

Passing to the GNS representations of $\psi^\HH$ and $\psi^\GG|_{\sA}$, we then identify $\Linf(\HH)$ with $P\Linf(\GG)$,
and obtain the normal *-homomorphism
\[
\pi:\Linf(\GG)\rightarrow\Linf(\HH)
\]
satisfying conditions of Definition \ref{defm}.
\end{proof}

\begin{example}
Using Lemma \ref{lift} we can employ   Rieffel deformation of locally compact   groups to get examples of open quantum subgroups. To be more precise, let $\Gamma$, $H$ and $G$ be locally compact groups such that
\begin{itemize}
\item $H$ is an open subgroup of $G$;
\item $\Gamma$ is a closed subgroup of $H$;
\item $\Gamma$ is   abelian.
\end{itemize}
Let $\Psi$ be a $2$-cocycle on $\hat\Gamma$.
Then as noted in \cite{pkaspRieff}, Rieffel deformation $\HH^\Psi$ of $H$ may be viewed as a closed quantum subgroup of $\GG^\Psi$ in the sens of Vaes. In particular we have a morphism $\pi\in\Mor(\C_0(\GG^\Psi),\C_0(\HH^\Psi))$ which identifies $\HH^\Psi$ with a closed quantum  subgroup of  $\GG^\Psi$ in the sense of Woronowicz. Let us note that  $\I_H\in\M(\C_0(\GG))$ is an invariant element under the left and the right shifts by $\Gamma\subset H$. Using the description of $\GG^\Psi$ in terms of   crossed products given  in   \cite{RDVCP},  we can view  $\I_H$ as an element of $\M(\C_0(\GG^\Psi))$ and prove that $x\in \ker\pi$ iff $\I_H x = 0$. Thus the Lemma \ref{lift} implies that  $\HH^\Psi$ is open quantum subgroup of $\GG^\Psi$ and  $  \I_H = \I_{\HH^\Psi}$.

To be more specific, let us consider  $G$  the Lorentz group $O(1,3)$ and $H$ the  proper Lorentz group $SO(1,3)^+$. Then $H$ is an open subgroup of $G$. In order to describe $\Gamma$ let us consider  $\pi:SL(2,\mathbb{C})\to SO(1,3)^+$ the standard  two-fold covering. The description of Rieffel deformation of $SL(2,\mathbb{C})$ based on    $\Gamma_0 \subset  SL(2,\mathbb{C})$
\[\Gamma_0 = \left\{\begin{bmatrix}1&z\\0&1 \end{bmatrix}:z\in\mathbb{C}\right\}
\] was given in \cite{QHLG}. Defining $\Gamma = \pi(\Gamma_0)$ we get $\Gamma\subset SO(1,3)^+\subset O(1,3)$ and   Rieffel deformation applied to this case yields an example of open quantum subgroup   $\HH^\Psi \subset \GG^\Psi$, where $\HH^\Psi, \GG^\Psi$ are Rieffel deformations of $H$ and $G$ respectively  and $\Psi$ is the 2-cocyle used in \cite{QHLG}.
\end{example}

We need one more lemma before stating our next theorem and completing our characterization of open quantum subgroups as described at the beginning of this section.
It is likely well known, but we provide the proof for the convenience of the reader.

\begin{lemma} \label{compacts}
Let $\sA$ be a \cst-algebra which is non-degenerately represented on a Hilbert space $\sH$ and assume that $\sA$ is isomorphic to a \cst-subalgebra of the algebra $\cK(\sK)$ for some Hilbert space $\sK$. Then the von Neumann algebra $\sA''\subset B(\sH)$ and the multiplier algebra $\M(\sA)$, also viewed as a subalgebra of $B(\sH)$, are equal.
\end{lemma}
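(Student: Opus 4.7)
The plan is to reduce the claim to the classical structure theorem for \cst-subalgebras of the compacts together with a corresponding splitting of $\sH$. By Arveson's theorem, any \cst-subalgebra of $\cK(\sK)$ is isometrically $*$-isomorphic to a $c_0$-direct sum $\bigoplus_{i\in I}\cK(\sH_i)$ of elementary \cst-algebras indexed by some set $I$. Combined with the standard fact that the multiplier functor turns $c_0$-direct sums into $\ell^\infty$-products, this yields an abstract identification $\M(\sA)\cong\prod_{i\in I}\B(\sH_i)$ (equipped with the supremum norm).

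Next I will decompose the nondegenerate representation of $\sA$ on $\sH$. Since each elementary algebra $\cK(\sH_i)$ has, up to unitary equivalence, only amplifications of the identity as its nondegenerate representations, using the mutually orthogonal central multiplier projections $p_i\in\M(\sA)$ one obtains orthogonal invariant subspaces $p_i\sH\cong\sH_i\ot\sL_i$ with multiplicity Hilbert spaces $\sL_i$, and a unitary identification
\[
\sH\cong\bigoplus_{i\in I}\sH_i\ot\sL_i
\]
under which $\sA$ acts as the $c_0$-direct sum $\bigoplus_{i\in I}\cK(\sH_i)\ot\I_{\sL_i}$.

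With this decomposition in hand, the proof becomes a short computation. The commutant is $\sA'=\prod_{i\in I}\I_{\sH_i}\ot\B(\sL_i)$, so the bicommutant equals
\[
\sA''=\Bigl\{\bigoplus_{i\in I}T_i\ot\I_{\sL_i}\st T_i\in\B(\sH_i),\;\sup_i\|T_i\|<\infty\Bigr\}.
\]
On the other hand, the unique strictly continuous unital extension of the representation to the identification $\M(\sA)\cong\prod_{i\in I}\B(\sH_i)$ carries the family $(T_i)_{i\in I}$ to exactly the same operator, and is injective by the nondegeneracy of the representation. Hence $\M(\sA)$ and $\sA''$ coincide as subalgebras of $\B(\sH)$.

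The only step that really needs care is the first one: packaging the structure theorem for \cst-subalgebras of $\cK(\sK)$ together with the amplification theorem for representations of elementary \cst-algebras into the global decomposition of $\sH$, and tracking that the projections $p_i$ really do come from $\M(\sA)$. Once this is done, matching $\M(\sA)$ against the bicommutant is essentially bookkeeping, and I anticipate no genuine obstacle.
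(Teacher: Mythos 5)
Your argument is correct and follows essentially the same route as the paper: decompose $\sA\cong\bigoplus_{i}\cK(\sH_i)$ via the structure theorem for \cst-subalgebras of the compacts, realize the (faithful) nondegenerate representation as a direct sum of amplified identity representations on $\bigoplus_i \sH_i\ot\sL_i$, and then compare $\M(\sA)$ and $\sA''$ blockwise. The only cosmetic difference is that you compute the commutant and bicommutant explicitly, while the paper simply notes that both the concretely represented multiplier algebra and the weak closure turn the $c_0$-sum into the $\ell^\infty$-product and checks the elementary case $\M(\cK(\sH_i)\ot\I_{\sL_i})=\B(\sH_i)\ot\I_{\sL_i}=(\cK(\sH_i)\ot\I_{\sL_i})''$; also note that injectivity of the strict extension uses faithfulness of the inclusion $\sA\subset\B(\sH)$ together with nondegeneracy, not nondegeneracy alone.
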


\begin{proof}
As $\sA$ is isomorphic to a $\cst$-subalgebra of compact operators, it is of the form $\oplus_{i \in \Jnd} \cK (\sK_i)$ for some family of Hilbert spaces $(\sK_i)_{i \in \Jnd}$ (\cite[Section 1.4]{Arvesonbook}). Thus we can view the embedding $\sA \subset B(\sH)$ as a nondegenerate representation $\pi$ of $\oplus_{i \in \Jnd} \cK (\sK_i)$, which is up to a unitary equivalence (which does not affect the statement we are proving) given by a direct sum of amplified identity representations,
$\pi=\oplus_{i \in \Jnd} \id \ot I_{\sL_i}$ (again see \cite[Section 1.4]{Arvesonbook}), where $\sL_i$ are auxiliary Hilbert spaces (in particular, ignoring the unitary equivalence, we have $\sH=\oplus_{i \in \Jnd} \sK_i \ot \sL_i$). Now as both the operations of passing to the concretely represented multiplier algebra and to the von Neumann algebraic closure exchange direct sums into direct products, it suffices to observe that for each $i \in \Jnd$ if we consider $\sA_i:= \cK(\sK_i) \ot I_{\sL_i} \subset B(\sK_i \ot \sL_i)$ we obviously have $\M(\sA_i) = B(\sK_i) \ot I_{\sL_i} = \sA_i ''$.
\end{proof}

\begin{theorem}\label{qb}
Let $\GG$ be a regular locally compact quantum group and $\HH$  a closed quantum subgroup of $\GG$, identified via the morphism
\[
\hat\pi:\Linf(\hat\HH)\rightarrow\Linf(\hat\GG),
\]
which satisfies the property $\hat\pi(\C_0(\hat\HH))\subseteq\C_0(\hat\GG)$.
Further assume that
 $\pi\in\M(\C_0^u(\GG),\C^u_0(\HH))$ admits a reduced version $\pi_r\in\M(\C_0(\GG),\C_0(\HH))$.
  Then $\HH$ is an open quantum subgroup of $\GG$.
\end{theorem}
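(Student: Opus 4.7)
The strategy is to apply Theorem~\ref{minimal}: it suffices to construct a minimal central projection $P \in Z(\Linf(\GG/\HH))$ with $P \in \M(\C_0(\GG))$ and $\pi_r(P) = \I$, from which openness of $\HH$ will then follow.

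First, I would work with the canonical right action $\alpha_\HH$ of $\HH$ on $\Linf(\GG)$ from Subsection~\ref{actions}, realized through the bicharacter $V \in \M(\C_0(\hGG) \otimes \C_0(\HH))$ by $\alpha_\HH(x) = V(x\otimes\I)V^*$. Regularity of $\GG$ together with the existence of $\pi_r$ ensures that $\alpha_\HH$ restricts to a \cst-algebraic action of $\HH$ on $\C_0(\GG)$ whose fixed-point algebra inside $\Linf(\GG)$ coincides with $\Linf(\GG/\HH)$. The core technical step is to prove that $\alpha_\HH$ is proper in the sense of Definition~\ref{properaction}. Here the hypothesis $\hat\pi(\C_0(\hHH)) \subseteq \C_0(\hGG)$ becomes decisive: via the relation $V = (\hat\pi\otimes\id)(\ww^\HH)$, it forces the slices $(\id\otimes\omega)(V)$ for $\omega \in \Lone(\HH)$ to lie in $\C_0(\hGG)$ itself and not merely in $\M(\C_0(\hGG))$. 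Combined with the regularity identity $\C_0(\hGG)\,\C_0(\GG) = \K(\Ltwo(\GG))$, this should supply a norm-dense cone of $\alpha_\HH$-integrable positive elements in $\C_0(\GG)$.

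Once properness of $\alpha_\HH$ is established, the Rieffel-Kustermans averaging $a\mapsto (\id\otimes\varphi^\HH)\alpha_\HH(a)$ is well-defined on $\mathcal{P}_{\alpha_\HH}$ and takes values in $\M(\C_0(\GG))\cap \Linf(\GG/\HH)$. A careful choice of $a \in \mathcal{P}_{\alpha_\HH}$ with $\pi_r(a)$ strictly positive in $\C_0(\HH)$, followed by spectral calculus on the average, should yield a nonzero projection $P \in \M(\C_0(\GG)) \cap \Linf(\GG/\HH)$. Normalization of $a$ allows one to arrange $\pi_r(P) = \I$, and centrality of $P$ in $\Linf(\GG/\HH)$ is inherited from the structure of the average together with uniqueness of the invariant weight on $\GG/\HH$ established in Proposition~\ref{quo-weight}. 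Minimality of $P$ can then be verified by an argument parallel to Proposition~\ref{1_H:counit}: any subprojection $Q \leq P$ in $Z(\Linf(\GG/\HH))$ would satisfy $\Delta^\GG(Q)(\I\otimes P) = Q\otimes P$, forcing $\pi_r(Q) \in \{0,\I\}$ and hence $Q \in \{0,P\}$. Theorem~\ref{minimal} then completes the proof.

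The main obstacle is the properness step, which requires translating the algebraic $\C_0$-containment hypothesis together with regularity into the analytic integrability condition of Definition~\ref{properaction}. A closely related subtlety is arranging that the extracted projection simultaneously sits in $\M(\C_0(\GG))$, is central and minimal in $Z(\Linf(\GG/\HH))$, and satisfies $\pi_r(P) = \I$; these compatibility requirements depend delicately on the choice of initial integrable element and on a suitable spectral surgery on the resulting average.
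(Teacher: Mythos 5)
Your overall target is the right one---the paper also reduces the statement to Theorem \ref{minimal}---but the route you propose for producing the projection $P$ has a genuine gap at its core. In the actual argument the decisive step is not properness of the action but the identification of the Vaes homogeneous space $\C_0(\GG/\HH)$ (which exists by regularity) with a \cst-subalgebra of the compacts: by \cite[Corollary 6.4]{Vaes-induction}, $\C_0(\GG/\HH)$ is Morita equivalent to $\C_0(\GG)\rtimes\HH$, and the hypothesis $\hat\pi(\C_0(\hHH))\subseteq\C_0(\hGG)$ together with regularity gives $\C_0(\GG)\rtimes\HH\subseteq\C_0(\GG)\rtimes\GG=\K(\Ltwo(\GG))$. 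Only after this does Lemma \ref{compacts} give $\M(\C_0(\GG/\HH))=\Linf(\GG/\HH)$, so that the central support of $\pi_r$ restricted to $\Linf(\GG/\HH)$ automatically lies in $\M(\C_0(\GG/\HH))\subseteq\M(\C_0(\GG))$ (using that $\C_0(\GG/\HH)$ is nondegenerate in $\M(\C_0(\GG))$ by \cite[Proposition 4.7]{KaspSol}) and satisfies $\pi_r(x)=0\iff xP=0$ on $\C_0(\GG/\HH)$. Your averaging scheme does not reach these conclusions: properness itself is not proved (the paper's Lemma \ref{propal} is established only \emph{after} openness is known, using $\I_\HH$), and even granting it, the average $(\id\otimes\varphi^\HH)\alpha_\HH(a)$ is merely a positive element of $\M(\C_0(\GG))\cap\Linf(\GG/\HH)$. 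Without first knowing that $\C_0(\GG/\HH)$ sits inside the compacts, there is no reason its spectral projections are again multipliers of $\C_0(\GG)$ (spectral projections of a positive multiplier are in general not multipliers), and no mechanism by which a ``normalization of $a$'' could force $\pi_r(P)=\I$.

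A second, related problem is that your minimality argument is circular. The identity $\Delta^\GG(Q)(\I\otimes P)=Q\otimes P$ (and more generally $\Delta^\GG(x)(\I\otimes P)=x\otimes P$ for $x\in\Linf(\GG/\HH)$) is precisely the support-projection property of $P$ relative to $\pi$: it is deduced from $(\id\otimes\pi)\Delta^\GG(x)=x\otimes\I$ combined with the equivalence $\pi(y)=0\iff yP=0$ on $\Linf(\GG/\HH)$. For a projection extracted by functional calculus from an average this relation is simply not available, so the step ``forcing $\pi_r(Q)\in\{0,\I\}$'' has no justification; uniqueness of the invariant weight from Proposition \ref{quo-weight} does not supply centrality or minimality either. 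To repair your argument you would effectively have to prove first that $\GG/\HH$ is discrete (that $\C_0(\GG/\HH)$ embeds into $\K(\Ltwo(\GG))$), which is exactly the crossed-product/Morita-equivalence step where the hypotheses $\hat\pi(\C_0(\hHH))\subseteq\C_0(\hGG)$ and regularity are genuinely used---and once that is in place, the properness and averaging machinery becomes unnecessary, since the support projection of $\pi_r$ already has all the required properties.
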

\begin{proof}
In the course of the proof we shall only use the reduced version $\pi_r$ of $\pi$, so for simplicity we shall denote it by $\pi\in\Mor(\C_0(\GG),\C_0(\HH))$.

Let $\C_0(\GG/\HH)$ be the quantum homogeneous space in the sense of \cite[Theorem 6.1]{Vaes-induction} (see Definition \ref{defhomspace}) (remember we assume $\GG$ is regular).
By \cite[Corollary 6.4]{Vaes-induction}, $\C_0(\GG/\HH)$ is Morita equivalent to $\C_0(\GG)\rtimes \HH$.
Then, as $\hat{\pi}$ allows us to identify $\C_0(\hHH)$ with a subalgebra of $\C_0(\hGG)$, using regularity of $\GG$ gives
\begin{equation}\label{mor-eq}
\C_0(\GG)\rtimes\HH\subseteq\C_0(\GG)\rtimes\GG = \cK(\Ltwo(\GG))
\end{equation}
(the first inclusion formally speaking is an isomorphism onto a \cst-subalgebra).
In particular $\C_0(\GG/\HH)$  may be identified with a subalgebra of $\cK(\Ltwo(\GG))$.

Thus, in view of Lemma \ref{compacts} we have the identification $\M(\C_0 (\GG/\HH))=\Linf(\GG/\HH)$.
By \cite[Proposition 4.7]{KaspSol} the quantum homogeneous space $\C_0(\GG/\HH)$ is a nondegenerate subalgebra of $\M(\C_0(\GG))$ and
\[
\pi|_{\C_0(\GG/\HH)}\in\Mor(\C_0(\GG/\HH),\C_0(\HH)) .
\]
The identification $\M(\C_0 (\GG/\HH))=\Linf(\GG/\HH)$ yields a central projection $P\in\M(\C_0(\GG/\HH))$ (the von Neumann algebraic central support of the corresponding normal extension of $\pi$) such that for $x\in\C_0(\GG/\HH)$ we have
\begin{equation}\label{P<->pi}
\pi(x) = 0\iff xP = 0 .
\end{equation}
Let us note that $P$ is minimal in $\Linf(\GG/\HH)$. Indeed, for all $x\in\Linf(\GG/\HH)$ we have
\[(\id\otimes\pi)\Delta^\GG(x) = x\otimes\I.\]
Thus $\Delta^\GG(x)(\I\otimes P) = x\otimes P$ for all $x\in\Linf(\GG/\HH)$. In particular for all $\omega\in\B(\Ltwo(\GG))_*$ we have $(\omega\otimes\id)(\Delta^\GG(x))P = \omega(x)P$. Since \[\Linf(\GG/\HH) = \overline{\{(\omega\otimes\id)(\Delta^\GG(x)):\omega\in\B(\Ltwo(\GG))_*, x\in\Linf(\GG/\HH)\}}^{\textrm{weak}}\] we conclude that $P$ is minimal in $\Linf(\GG/\HH)$. Using Theorem \ref{minimal} we see that $\HH$ is open in $\GG$.

 \end{proof}

\begin{remark}\label{rem2}
Note that in the classical case the proof ends as soon as we get the identification
\[
\C_0(G/H) \cong\oplus\cK(H_i) .
\]
Thus we get a different proof of \cite[Theorem 5.4]{BKLS}. The regularity, which is the only assumption used up to that point, in the classical case holds automatically.
\end{remark}

\section{Quantum homogeneous space}
A simple characterization of open subgroups $H$ among closed subgroups of a locally compact group $G$ in terms of the homogeneous spaces is the following:
$H\leq G$ is open if and only if the homogeneous space $G/H$ is discrete. To introduce the quantum counterpart of this result, it is natural to propose the following definition.

\begin{definition}
Let $\GG$ be a regular locally compact quantum group with a closed subgroup $\HH$.
We say the homogeneous space $\GG/\HH$ is discrete if $\C_0(\GG/\HH) \cong \oplus_{i\in \Jnd} \cK(\Hil_i)$ for a family of Hilbert spaces $(\Hil_i)_{i \in \Jnd}$,
and we say $\GG/\HH$ is finite if $\C_0(\GG/\HH)$ is a finite-dimensional \cst-algebra.
\end{definition}

It was essentially shown in the course of the proof of Theorem \ref{qb} that discreteness of the quantum homogeneous space
(under the technical assumption of the properness of the action) yields openness of the corresponding closed quantum subgroup. We now record the converse of this fact.

\begin{proposition}\label{C*-quot-dir-sum}
Let $\GG$ be a regular locally compact quantum group, and let $\HH$ be an open quantum subgroup of $\GG$.
Then  the homogeneous space $\GG/\HH$ is discrete.
\end{proposition}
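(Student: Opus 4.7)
The plan is to re-use the computation already performed in the proof of Theorem \ref{qb}, which shows (under regularity) that $\C_0(\GG/\HH)$ embeds as a \cst-subalgebra of $\cK(\Ltwo(\GG))$, and then to invoke the standard structure theorem for \cst-subalgebras of compact operators.

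First I would recall that, by Theorem \ref{Vaescl} and Remark \ref{redinj}, the open quantum subgroup $\HH$ is closed in $\GG$ in the sense of Vaes, and the associated injection $\gamma \colon \Linf(\hHH) \to \Linf(\hGG)$ restricts to a \cst-embedding $\gamma|_{\C_0(\hHH)} \colon \C_0(\hHH) \hookrightarrow \C_0(\hGG)$. This is precisely the hypothesis needed to apply the constructions of \cite{Vaes-induction} together with regularity of $\GG$. Indeed, combining this embedding with the regularity identity $\C_0(\GG) \C_0(\hGG) = \cK(\Ltwo(\GG))$ yields, exactly as in the second paragraph of the proof of Theorem \ref{qb},
\[
\C_0(\GG) \rtimes \HH \;\subseteq\; \C_0(\GG) \rtimes \GG \;=\; \cK(\Ltwo(\GG)).
\]

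Second, I would invoke the Morita equivalence $\C_0(\GG/\HH) \sim_M \C_0(\GG) \rtimes \HH$ supplied by \cite[Corollary 6.4]{Vaes-induction} (which, as noted after Theorem \ref{defhomspace}, is available under mere regularity of $\GG$). Since $\C_0(\GG) \rtimes \HH$ is a \cst-subalgebra of the compact operators on $\Ltwo(\GG)$, the standard structure theory for \cst-subalgebras of $\cK(\Ltwo(\GG))$ (see e.g.\ the proof of Lemma \ref{compacts}) shows that $\C_0(\GG) \rtimes \HH \cong \oplus_{j} \cK(\sK_j)$ for some family of Hilbert spaces $(\sK_j)$. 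Morita equivalence preserves the type-I spectrum and the property of being a direct sum of elementary \cst-algebras, so $\C_0(\GG/\HH)$ inherits a decomposition of the same form $\oplus_{i \in \Jnd} \cK(\Hil_i)$, which is the conclusion.

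The most delicate point is the passage from Morita equivalence to the explicit direct-sum decomposition for $\C_0(\GG/\HH)$. To bypass any subtlety, one can argue more directly: the proof of Theorem \ref{qb} already notes that the Morita equivalence places $\C_0(\GG/\HH)$ itself as a (concretely represented) \cst-subalgebra of $\cK(\Ltwo(\GG))$ — for instance via the imprimitivity bimodule $\mathcal{J}$ realizing the Morita equivalence, upon fixing a faithful representation of $\C_0(\hHH)$ coming from the embedding into $\C_0(\hGG) \subseteq B(\Ltwo(\GG))$. Once this embedding into the compacts is in hand, the structure theorem for \cst-subalgebras of $\cK(\Hil)$ furnishes the decomposition $\C_0(\GG/\HH) \cong \oplus_{i \in \Jnd} \cK(\Hil_i)$ immediately, and we are done.
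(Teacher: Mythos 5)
Your proposal is correct and follows essentially the same route as the paper: Theorem \ref{Vaescl} and Remark \ref{redinj} give $\C_0(\hHH)\subset\C_0(\hGG)$, whence the inclusion \eqref{mor-eq} of $\C_0(\GG)\rtimes\HH$ into $\cK(\Ltwo(\GG))$ and the Morita equivalence of \cite[Corollary 6.4]{Vaes-induction} place $\C_0(\GG/\HH)$ inside the compacts, exactly as in the proof of Theorem \ref{qb}. The only difference is that you spell out the passage from Morita equivalence to the direct-sum decomposition (which the paper leaves implicit), and your aside attributing the imprimitivity bimodule $\mathcal{J}$ to a representation of $\C_0(\hHH)$ is a harmless imprecision, since the relevant bimodule is the one implementing $\C_0(\GG/\HH)\sim\C_0(\GG)\rtimes\HH$ with the latter algebra already faithfully and nondegenerately represented on $\Ltwo(\GG)$.
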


\begin{proof}

By Theorem \ref{Vaescl}, $\HH$ is closed in the sense of Vaes, and by Remark \ref{redinj}, $\C_0(\hat\HH)\subset \C_0(\hat\GG)$.
Hence \eqref{mor-eq} holds, which implies
$\C_0(\GG/\HH)$ is isomorphic to a subalgebra of $\cK(\Ltwo(\GG))$. This completes the proof.
\end{proof}

In the compact case the result naturally simplifies, taking the following form.

\begin{corollary}
Let $\GG$ be a compact quantum group, and let $\HH$ be a closed quantum subgroup of $\GG$. Then $\HH$ is open in $\GG$ if and only if $\GG/\HH$ is finite.
\end{corollary}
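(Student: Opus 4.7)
My plan proceeds in two directions, the forward one being more straightforward.

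\emph{Forward direction.} The plan is to combine Proposition \ref{C*-quot-dir-sum} with compactness. Since any compact quantum group is regular, that proposition gives $\C_0(\GG/\HH) \cong \bigoplus_{i \in \Jnd} \cK(\sH_i)$ as a $c_0$ direct sum. The key point is that $\C_0(\GG/\HH)$ is unital in the compact case: it is contained in $\M(\C_0(\GG)) = \C_0(\GG)$ (which is unital for compact $\GG$), and Theorem \ref{defhomspace}(ii) gives an action of $\GG$ on it; averaging by the Haar state of $\GG$ produces a $\GG$-equivariant conditional expectation onto the fixed-point algebra $\CC\I$, which forces $\I \in \C_0(\GG/\HH)$. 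But a unital $c_0$-direct sum of compact operator algebras must have a finite index set and finite-dimensional summands, so $\C_0(\GG/\HH)$ is finite-dimensional.

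\emph{Backward direction.} Assume $\C_0(\GG/\HH) \cong \bigoplus_{j=1}^{k} M_{n_j}(\CC)$, with minimal central projections $p_1,\ldots,p_k$. My plan is to verify the hypotheses of Theorem \ref{minimal} by producing a suitable $P$ via the CQG counit. Since $\C_0(\GG/\HH)$ is finite-dimensional it is contained in the Hopf $*$-algebra of matrix coefficients of $\GG$, on which the counit $\epsilon^{\GG}$ is defined as a character. Applying $\epsilon^{\GG}\otimes\id$ to the algebraic coideal identity $(\id\otimes\pi)\Delta^{\GG}(p_j) = p_j\otimes\I$ yields $\pi(p_j) = \epsilon^{\GG}(p_j)\I$. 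Since each $p_j$ is a projection in the matrix coefficient algebra, $\epsilon^{\GG}(p_j) \in \{0,1\}$, and as $\sum_j \epsilon^{\GG}(p_j) = \epsilon^{\GG}(\I) = 1$, exactly one projection $P := p_{j_0}$ satisfies $\pi_r(P) = \I$. Then $P$ is a minimal central projection of $\Linf(\GG/\HH) = \C_0(\GG/\HH)$ lying in $\M(\C_0(\GG))$, and Theorem \ref{minimal} yields that $\HH$ is open in $\GG$.

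\emph{Main obstacle.} The hardest step is to verify that the universal morphism $\pi \in \Mor(\C_0^u(\GG),\C_0^u(\HH))$ identifying $\HH$ admits a reduced version $\pi_r\in\Mor(\C_0(\GG),\C_0(\HH))$, as Theorem \ref{minimal} requires; this is a standard but nontrivial feature of closed quantum subgroups of compact quantum groups, following from the Peter-Weyl decomposition and the nice behaviour of the Haar state on matrix coefficients. Alternatively, one could apply Theorem \ref{qb} instead, where the extra hypothesis $\hat\pi(\C_0(\hHH)) \subset \C_0(\hGG)$ is automatic in the compact-to-discrete duality because the embedding of discrete duals respects the $c_0$ direct sum decomposition $\C_0(\hGG) \cong \bigoplus_{\alpha} B(\sH_{\alpha})$ over irreducible representations.
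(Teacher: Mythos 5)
Your forward direction is essentially the paper's own argument: Proposition \ref{C*-quot-dir-sum} gives discreteness and unitality of $\C_0(\GG/\HH)$ forces finiteness. (Two small remarks: slicing $\Delta|_{\sA}$ by the Haar state a priori lands only in $\M(\sA)$, so to get $\I\in\sA$ you either need the standard spectral-subalgebra fact for compact quantum group actions, or, simpler, take $\omega$ equal to the Haar state in Theorem \ref{C*-quot}, which gives $h(\I_\HH)\I\in\C_0(\GG/\HH)$ directly.)

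The backward direction, however, has a genuine gap exactly at the point you flag as the ``main obstacle'', and your proposed resolutions of it do not work. The existence of a reduced version $\pi_r\in\Mor(\C_0(\GG),\C_0(\HH))$ is \emph{not} a standard feature of closed quantum subgroups of compact quantum groups: for a non-coamenable compact $\GG$ and $\HH=\{e\}$ the reduced version would be a bounded counit on $\C_0(\GG)$, which does not exist. So $\pi_r$ must be extracted from the finiteness hypothesis, and that argument is missing. The fallback via Theorem \ref{qb} fails twice over: that theorem \emph{also} assumes the existence of $\pi_r$, and its hypothesis $\hat\pi(\C_0(\hat\HH))\subseteq\C_0(\hat\GG)$ is certainly not automatic in the compact/discrete duality --- for the classical torus inside $SU(2)$ the image of a minimal central projection of $\C_0(\hat\HH)$ has norm-one components in infinitely many blocks of $\C_0(\hat\GG)$, so it is not in the $c_0$-sum; indeed by Theorems \ref{main} and \ref{qb} this inclusion essentially characterizes openness, so assuming it is close to assuming the conclusion. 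In addition, your claim that the finite-dimensional algebra $\C_0(\GG/\HH)$ sits inside the Hopf $*$-algebra of matrix coefficients is true but not free: it needs an argument (e.g. a finite-dimensional coideal meets only finitely many isotypic components of the regular corepresentation, hence lies in $\textup{Pol}(\GG)$), as does the passage from the fixed-point description via the bicharacter $V$ to the algebraic identity $(\id\ot\pi)\Delta^\GG(p_j)=p_j\ot\I$. The good news is that your counit computation is precisely what can close the $\pi_r$ gap: writing $E=(\id\ot h^\HH)\circ\alpha_\HH$ for the normal conditional expectation onto the finite-dimensional $\Linf(\GG/\HH)$, one checks on $\textup{Pol}(\GG)$ that $h^\HH\circ\pi=\epsilon\circ E$, and $\epsilon$ restricted to a finite-dimensional algebra is a bounded (normal) character, so $h^\HH\circ\pi$ factors through $\Linf(\GG)$ and the reduced morphism exists. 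For comparison, the paper takes a different route to the projection: it identifies $\M(\C_0(\GG/\HH))=\Linf(\GG/\HH)$ via Lemma \ref{compacts}, takes $P$ to be the central support of the normal extension of $\pi_r$ there, proves minimality from the density statement of Theorem \ref{quot}, and then invokes Theorem \ref{minimal}; your counit construction of $P$ is an attractive alternative, but as written the argument is incomplete without the $\pi_r$ step.
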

\begin{proof}
Recall that every compact quantum groups is regular.
Therefore, if $\GG/\HH$ is finite then as remarked above it follows from the proof of Theorem \ref{qb} that $\HH$ is open.

Conversely, if $\HH$ is open then Proposition \ref{C*-quot-dir-sum} implies $\GG/\HH$ is discrete, and since moreover
$\C_0(\GG/\HH)$ is unital, the assertion follows.
\end{proof}

The above fact and \cite[Theorem 6.17]{Pinzari} imply that  a connected component of the identity of a compact quantum group $\GG$ of Lie type with commutative and normal torsion representation category $\textup{Rep} \GG^t$ is open in $\GG$ (we refer to \cite{Pinzari} for the corresponding terminology). It also implies that for example Woronowicz's $SU_q(2)$ does not admit any open quantum subgroup for all $q\in[-1,0)\cup(0,1]$, as follows from the list of the quantum subgroups in \cite{Podlessubgroups}.

Recall from the introduction that  in contrast to the von Neumann algebra $\Linf(\GG/\HH)$, the construction of $\C_0(\GG/\HH)$
is highly non-trivial, and not explicit (does not identify $\C_0(\GG/\HH)$ as a concrete \cst-algebra).
In fact, even its existence in general, without the regularity assumption, remains an open problem.

Our next theorem shows that the \cst-algebra $\C_0(\GG/\HH)$ has in fact a very concrete and simple realization when $\HH$ is open.

\begin{theorem}\label{C*-quot}
 Let $\GG$ be a regular locally compact quantum group and let $\HH \leq \GG$ be an open quantum subgroup. Then
\[
\C_0(\GG/\HH) = {\{(\omega\otimes\id) \Delta^\GG(\I_\HH):\omega\in \Lone(\GG)\}}^{-\|\cdot\|}.
\]
\end{theorem}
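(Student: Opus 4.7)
Let me denote $\sB := \overline{\{(\omega\otimes\id) \Delta^\GG(\I_\HH):\omega\in \Lone(\GG)\}}^{\|\cdot\|}$; the set on the right is already a linear subspace since the slice depends linearly on $\omega$. The plan is to prove $\sB = \C_0(\GG/\HH)$ via two inclusions.

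For the inclusion $\sB \subseteq \C_0(\GG/\HH)$, each slice $b_\omega := (\omega \otimes \id)\Delta^\GG(\I_\HH)$ lies in $\Linf(\GG/\HH)$ by Theorem \ref{quot}, and in $\M(\C_0(\GG))$ by general slice-map theory applied to $\Delta^\GG(\I_\HH) \in \M(\C_0(\GG)\otimes \C_0(\GG))$. To upgrade to membership in $\C_0(\GG/\HH)$, I would combine two structural facts. First, property (iii) of Theorem \ref{defhomspace} gives $\Delta^\GG(\I_\HH) \in \M(\cK(\Ltwo(\GG)) \otimes \C_0(\GG/\HH))$. Second, Proposition \ref{1_H:counit} together with the ergodicity of $\Delta^\HH$ on $\Linf(\HH)$ forces the ``identity coset'' factor $\Hil_{i_0}$ in the decomposition $\C_0(\GG/\HH) \cong \bigoplus_i \cK(\Hil_i)$ (provided by Proposition \ref{C*-quot-dir-sum}) to be one-dimensional, since $\I_\HH \Linf(\GG/\HH)\I_\HH \subseteq \Linf(\HH)^{\textup{fix}(\Delta^\HH)} = \CC \I_\HH$. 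In particular $\I_\HH \in \C_0(\GG/\HH)$. Then, approximating $\omega$ in norm by functionals $u_\lambda \cdot \omega$ for an approximate unit $(u_\lambda)$ of $\cK(\Ltwo(\GG))$ and using the identity $((u_\lambda \cdot \omega)\otimes \id)\Delta^\GG(\I_\HH) = (\omega \otimes \id)((u_\lambda \otimes \I)\Delta^\GG(\I_\HH))$, one reduces the computation to slicing an element of the honest tensor product $\cK(\Ltwo(\GG)) \otimes \C_0(\GG/\HH)$, whose slices land in $\C_0(\GG/\HH)$; the slice $b_\omega$ is obtained as a norm limit.

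For the inclusion $\C_0(\GG/\HH) \subseteq \sB$, I would first establish that $\sB$ is a $*$-subalgebra. The $*$-invariance is immediate from $\I_\HH^* = \I_\HH$, giving $b_\omega^* = b_{\bar\omega}$ where $\bar\omega(x) := \overline{\omega(x^*)}$ lies in $\Lone(\GG)$. Multiplicative closure is verified by expanding $b_{\omega_1}b_{\omega_2} = (\omega_1 \otimes \omega_2 \otimes \id)(\Delta^\GG(\I_\HH)_{13}\Delta^\GG(\I_\HH)_{23})$ and applying coassociativity together with the group-like identity $\Delta^\GG(\I_\HH)(\I \otimes \I_\HH) = \I_\HH \otimes \I_\HH$ to rewrite this as a single slice of $\Delta^\GG(\I_\HH)$ against a functional in $\Lone(\GG)$. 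By Theorem \ref{quot} combined with Proposition \ref{C*-quot-dir-sum}, $\sB$ is $\sigma$-weakly dense in $\Linf(\GG/\HH) \cong \prod_i \B(\Hil_i)$. So $\sB$ is a $C^*$-subalgebra of $\bigoplus_i \cK(\Hil_i)$ which is $\sigma$-weakly dense in $\prod_i \B(\Hil_i)$; projecting to each factor yields a $\sigma$-weakly dense $C^*$-subalgebra of $\B(\Hil_i)$ contained in $\cK(\Hil_i)$, which acts irreducibly and so equals $\cK(\Hil_i)$. A direct-sum assembly argument then forces $\sB = \bigoplus_i \cK(\Hil_i) = \C_0(\GG/\HH)$.

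The main obstacle lies in the first inclusion, namely in extracting from the multiplier $\Delta^\GG(\I_\HH)$ a slice that genuinely lies in $\C_0(\GG/\HH)$ rather than in its multiplier algebra $\M(\C_0(\GG/\HH)) = \Linf(\GG/\HH)$. This subtlety is bridged by the combination of property (iii) of Theorem \ref{defhomspace} with a careful norm-density argument for compactly factored functionals $u \cdot \omega$. The verification of multiplicative closure in the second inclusion is also a delicate point, requiring correct use of coassociativity together with the projection identity for $\Delta^\GG(\I_\HH)$ to reduce the product of slices to a single slice.
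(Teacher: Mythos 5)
Your overall skeleton (two inclusions, Proposition \ref{C*-quot-dir-sum} plus the identification $\M(\C_0(\GG/\HH))=\Linf(\GG/\HH)$, weak$^*$-density from Theorem \ref{quot}) matches the paper's, and your observation that $\I_\HH\in\C_0(\GG/\HH)$ (minimality of $\I_\HH$ plus \cite[Lemma 6.4]{KV}) is correct. But the step you yourself single out as the main obstacle is not closed by the tool you invoke. Property (iii) of Theorem \ref{defhomspace} only gives $\Delta^\GG(\I_\HH)\in\M\bigl(\cK(\Ltwo(\GG))\otimes\C_0(\GG/\HH)\bigr)$, and multiplying a multiplier by $u_\lambda\otimes\I$ with $u_\lambda$ compact leaves you in the multiplier algebra: for non-unital $\sA$ the product $(u\otimes\I)T$ with $T\in\M(\cK\otimes\sA)$ need not lie in $\cK\otimes\sA$ (take $T=\I\otimes\I$). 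So the claim that $(u_\lambda\otimes\I)\Delta^\GG(\I_\HH)$ lies in the honest tensor product is unjustified as stated. It is true, but the proof needs the Podle\'s/continuity condition of the action in property (ii) --- each $\Delta^\GG(\I_\HH)(c\otimes\I)$, $c\in\C_0(\GG)$, lies in $\C_0(\GG)\otimes\C_0(\GG/\HH)$ (here your $\I_\HH\in\C_0(\GG/\HH)$ becomes load-bearing), after which regularity and the ideal property of $\cK\otimes\C_0(\GG/\HH)$ in its multiplier algebra handle compact $u$; at that point one can just slice against $\omega(\,\cdot\,c)$ directly. The paper bypasses all of this: it uses Lemma \ref{compacts} to get $\M(\C_0(\GG/\HH))=\Linf(\GG/\HH)$, then relation \eqref{charcomm} to write $\bigl((\omega\otimes\id)\Delta^\GG(\I_\HH)\bigr)x=\bigl((R^\GG(x)\omega)\otimes\id\bigr)\Delta^\GG(\I_\HH)\in\C_0(\GG/\HH)$ for $x\in\C_0(\GG/\HH)$, and an approximate unit of $\C_0(\GG/\HH)$ to conclude.

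The second inclusion has a more serious unsupported claim: multiplicativity of $\sB$ does not follow from coassociativity and the group-like identity alone. Rewriting $b_{\omega_1}b_{\omega_2}$ as a single slice requires the unitary antipode; it is precisely \eqref{charcomm} (derived in Theorem \ref{quot} from strong left invariance) that yields $b_{\omega_1}b_{\omega_2}=\bigl(\omega_1(\,\cdot\,R^\GG(b_{\omega_2}))\otimes\id\bigr)\Delta^\GG(\I_\HH)$; classically the needed identity $\I_H(st)\I_H(rt)=\I_H(sr^{-1})\I_H(rt)$ visibly involves the inverse, so no bialgebra-level manipulation produces it. Fortunately you do not need $\sB$ to be an algebra at all: once $\sB$ is a norm-closed subspace of $\C_0(\GG/\HH)\cong\oplus_i\cK(\Hil_i)$ that is weak$^*$-dense in $\Linf(\GG/\HH)\cong\prod_i\B(\Hil_i)$, the fact that the dual of $\oplus_i\cK(\Hil_i)$ is exactly the predual of $\prod_i\B(\Hil_i)$ forces $\sB=\C_0(\GG/\HH)$ by Hahn--Banach --- this is how the paper concludes, and it also replaces your factor-by-factor irreducibility and ``assembly'' step, which as written does not by itself exclude graph-type subalgebras without appealing to weak$^*$-density again.
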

\begin{proof}
By Proposition \ref{C*-quot-dir-sum} we have $\C_0(\GG/\HH)\cong\oplus\cK(H_i)$, and therefore using Lemma \ref{compacts} we get
\begin{equation}\label{idlm1}
\Linf(\GG/\HH) = \M(\C_0(\GG/\HH)) \cong \prod_i \B(H_i).
\end{equation}

Denote $\sB := {\{(\omega\otimes\id) \Delta^\GG(\I_\HH):\omega\in\Lone(\GG)\}}^{-\|\cdot\|}$.
It follows from \eqref{charcomm} that
\[
(R^\GG(x)\omega\otimes\id)\Delta^\GG(\I_\HH) = \big((\omega\otimes\id)\Delta^\GG(\I_\HH)\big) \,x \in \C_0(\GG/\HH)
\]
for all $\omega \in \Lone(\GG)$ and $x\in \C_0(\GG/\HH)$. Since $C_0(\GG/\HH)$ is non-degenerate it follows further that $\sB \subseteq \C_0(\GG/\HH)$.
Moreover $\sB$ is weak* dense in $\Linf(\GG/\HH)$ by Theorem \ref{quot}. Thus we conclude from \eqref{idlm1} that $\sB = \C_0(\GG/\HH)$.
\end{proof}

In view of the above characterization it is natural to define the quantum homogeneous space by
$\C_0(\GG/\HH) := {\{(\omega\otimes\id) \Delta^\GG(\I_\HH):\omega\in\Lone(\GG)\}}^{-\|\cdot\|}$ for a general (not necessarily regular)
locally compact quantum group $\GG$ and an open quantum subgroup $\HH$.
An immediate question then is whether in this case the set ${\{(\omega\otimes\id) \Delta^\GG(\I_\HH):\omega\in\Lone(\GG)\}}^{-\|\cdot\|}$
is isomorphic to a \cst-subalgebra of compact operators (or whether it satisfies the conditions in Theorem \ref{defhomspace}).

\section{Normal open quantum subgroups}

In this section we consider open quantum subgroups that are also normal (see Definition \ref{def:normal}).
We first give a characterization of normality of an open quantum subgroup $\HH\leq \GG$ in terms of the corresponding projection $\I_\HH$. Recall that if $x \in \Linf(\GG)$ and $\nu, \mu \in \Lone(\GG)$ then we write $\mu * x := (\id \ot \mu) \Delta(x) \in \Linf(\GG)$, $x * \mu = (\mu \ot \id) (\Delta(x)) \in \Linf(\GG)$,
$\mu * \nu = (\mu \ot \nu) \circ \Delta \in \Lone(\GG)$. 

\begin{theorem}
Let $\GG$ be a locally compact quantum group of Kac type, and let $\HH$ be
an open quantum subgroup of $\GG$. Then the following are equivalent:
\begin{enumerate}
\item
$\HH$ is normal in $\GG$;
\item
$\omega*\I_\HH = \I_\HH*\omega$ for all $\omega\in \Lone(\GG)$.
\end{enumerate}
\end{theorem}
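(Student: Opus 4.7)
The plan is to handle the two implications separately, with the harder one being $(1)\Rightarrow(2)$.

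The direction $(2)\Rightarrow(1)$ I would prove immediately. Assuming $\omega*\I_\HH = \I_\HH*\omega$ for every $\omega\in\Lone(\GG)$, the linear subspaces $\{\omega*\I_\HH:\omega\in\Lone(\GG)\}$ and $\{\I_\HH*\omega:\omega\in\Lone(\GG)\}$ of $\Linf(\GG)$ coincide pointwise, so their weak$^*$-closures are equal. By Theorem~\ref{quot} the latter closure is $\Linf(\GG/\HH)$; by the obvious mirror-image statement (obtained by applying $R^\GG$ and using $R^\GG(\I_\HH)=\I_\HH$ from Lemma~\ref{charac-func}) the former closure is $\Linf(\HH\backslash\GG)$. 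The resulting equality $\Linf(\GG/\HH)=\Linf(\HH\backslash\GG)$ is the normality characterisation recalled in~\eqref{normcondwang}.

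For $(1)\Rightarrow(2)$, my first step would be to reformulate the conclusion as the flip-symmetry
\[
\Delta^\GG(\I_\HH) = \sigma(\Delta^\GG(\I_\HH))
\]
in $\Linf(\GG)\vtens\Linf(\GG)$; since slicing the second tensor leg against arbitrary $\omega\in\Lone(\GG)$ separates points, this identity is actually equivalent to condition $(2)$. Using the standard identity $\Delta^\GG\circ R^\GG=\sigma\circ(R^\GG\otimes R^\GG)\circ\Delta^\GG$ together with $R^\GG(\I_\HH)=\I_\HH$, the symmetry is in turn equivalent to the $(R^\GG\otimes R^\GG)$-invariance of $\Delta^\GG(\I_\HH)$.

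I would then invoke normality to put extra structure on the problem. Under $(1)$, $\Linf(\GG/\HH)$ is a Baaj--Vaes subalgebra of $\Linf(\GG)$, so $\GG/\HH$ is itself a locally compact quantum group of Kac type with $R^{\GG/\HH}=R^\GG|_{\Linf(\GG/\HH)}$, and $\Delta^\GG(\I_\HH)$ lies in $\Linf(\GG/\HH)\vtens\Linf(\GG/\HH)$. Proposition~\ref{1_H:counit} read inside $\GG/\HH$ then shows that any normal state $\omega$ on $\Linf(\GG/\HH)$ with $\omega(\I_\HH)=1$ yields a functional $\epsilon(\,\cdot\,):=\omega(\I_\HH\,\cdot\,)$ acting as a right counit, $(\id\otimes\epsilon)\Delta^\GG|_{\Linf(\GG/\HH)}=\id$. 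Averaging $\omega$ by $R^\GG$ (permissible since $R^\GG(\I_\HH)=\I_\HH$ and $R^\GG$ preserves $\Linf(\GG/\HH)$ by normality) I may assume in addition $\epsilon\circ R^\GG=\epsilon$, whence $\epsilon$ also acts as a left counit, and in particular
\[
(\epsilon\otimes\id)\Delta^\GG(\I_\HH) = \I_\HH = (\id\otimes\epsilon)\Delta^\GG(\I_\HH).
\]

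The hardest step, which I anticipate will be the main obstacle, is to combine these two counit identities with the group-like identities of Proposition~\ref{delta(P)} (and their $R^\GG$-transformed versions) to force the $(R^\GG\otimes R^\GG)$-invariance of $\Delta^\GG(\I_\HH)$. My preferred route is to exploit the ``discrete-like'' structure of $\Linf(\GG/\HH)$ (cf.~Proposition~\ref{C*-quot-dir-sum} and Theorem~\ref{C*-quot}) and characterise $\Delta^\GG(\I_\HH)$ as the unique projection in $\Linf(\GG/\HH)\vtens\Linf(\GG/\HH)$ dominating $\I_\HH\otimes\I_\HH$ and compatible with the counit slicings above; once such a uniqueness is in place, both $\Delta^\GG(\I_\HH)$ and $(R^\GG\otimes R^\GG)\Delta^\GG(\I_\HH)$ satisfy it and must agree. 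A parallel, perhaps cleaner, route would be to invoke the dual picture of~\eqref{ex1}--\eqref{ex2}: the normal compact quantum subgroup $\widehat{\GG/\HH}\subset\hh\GG$ corresponding to $\HH$ has a Haar idempotent that, under the natural pairing, corresponds to $\I_\HH$, and the required flip-symmetry then reduces to the tracial property of the Haar state of a Kac-type compact quantum group.
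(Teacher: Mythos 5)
Your proof of $(2)\Rightarrow(1)$ is correct and is essentially the paper's own argument: both identify the weak$^*$-closures of the two convolution orbits of $\I_\HH$ with $\Linf(\GG/\HH)$ and $\Linf(\HH\backslash\GG)$ via Theorem~\ref{quot} and its right-handed version, and then invoke the characterisation \eqref{normcondwang}.

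The direction $(1)\Rightarrow(2)$, however, is not closed by your proposal, and your preferred route contains a genuine gap. The preliminary reductions are fine (flip-symmetry of $\Delta^\GG(\I_\HH)$ is indeed equivalent to (2), and under normality $\Delta^\GG(\I_\HH)$ lives in $\Linf(\GG/\HH)\vtens\Linf(\GG/\HH)$ with the counit functional of Proposition~\ref{1_H:counit} at hand), but the proposed uniqueness characterisation fails: a projection in $\Linf(\GG/\HH)\vtens\Linf(\GG/\HH)$ dominating $\I_\HH\ot\I_\HH$ and having both counit slices equal to $\I_\HH$ is far from unique. Already for a classical discrete group $\Gamma$, where $\I_\HH=\delta_e$, the projection $\delta_e\ot\delta_e+\delta_g\ot\delta_h$ with $g,h\neq e$, $h\neq g^{-1}$, satisfies both conditions but is not $\Delta(\delta_e)$. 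A structural warning sign is that your route makes no real use of the Kac hypothesis, which is essential here: the trivial subgroup of a non-Kac discrete quantum group such as $\widehat{SU_q(2)}$ ($q^2\neq 1$) is open and normal, all of your reductions apply to its support projection of the counit, and yet the coproduct of that projection is \emph{not} flip-symmetric (its component in the block of the fundamental representation is the rank-one projection onto the $q$-deformed invariant vector $e_1\ot e_2-q\,e_2\ot e_1$, which is not flip-invariant). So no argument of the proposed shape, avoiding traciality, can succeed.

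Your ``parallel, perhaps cleaner route'' is in fact the paper's proof. There one uses that $\GG$ of Kac type is regular, so by normality and Proposition~\ref{C*-quot-dir-sum} (together with the remarks after Definition~\ref{def:normal}) the quotient $\GG/\HH$ is a discrete quantum group of Kac type, and by Proposition~\ref{1_H:counit} the projection $\I_\HH$ is the support projection of its counit; the conclusion then follows because this projection is the image under the regular representation of the Haar state of the compact dual, and that state is a trace, which yields $\omega*\I_\HH=\I_\HH*\omega$. To make your sketch a proof you would need to substantiate precisely these two points -- the identification of $\I_\HH$ with the Haar idempotent of $\widehat{\GG/\HH}$ and the computation by which traciality gives the two-sided convolution identity -- which at present you assert rather than prove (admittedly, the published argument is itself terse at exactly this step).
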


\begin{proof}
Suppose (2) holds, then by Theorem \ref{quot}, and its natural `right' version,
\[\begin{split}
\Linf(\GG/\HH)
&=
\{(\omega\otimes\id)\Delta^\GG(\I_\HH):\omega\in\Lone(\GG)\}''
\\&=
\{(\id\otimes\omega)\Delta^\GG(\I_\HH):\omega\in\Lone(\GG)\}''
=
\Linf(\HH \backslash \GG)
\end{split}\]
which implies $\HH$ is normal in $\GG$.

Now suppose $\HH$ is normal. Then since every locally compact quantum group of Kac type is regular, Proposition \ref{C*-quot-dir-sum} and remarks after Definition \ref{def:normal}
imply that $\GG/\HH$ is a discrete quantum group of Kac type.
Moreover, by Proposition \ref{1_H:counit}, $\I_\HH$, as an element of $\linf(\GG/\HH)$ is the minimal central projection associated to the counit (the last statement follows from an elementary computation).

Hence, the implication $(1)\Rightarrow (2)$ reduces to verification of (2) for the support projection of the counit:
specifically, we have to show that if $\KK$ is  a discrete quantum group of Kac type and $e \in \ell^\infty(\KK)$ is the support projection of the counit then
$\omega*e = e*\omega$ for all $\omega\in \ell^\infty(\KK)_*$.
But this follows from the facts that $e$ is the regular representation of the dual Haar state $\hat\varphi$, and the latter is a trace.
\end{proof}

Next, we prove there is a canonical 1-1 correspondence between normal open quantum subgroups of a locally compact quantum group
and normal compact quantum subgroups of its dual.

\begin{theorem}\label{11opcom}
Let $\GG$ be a locally compact quantum group.
There is a 1-1 correspondence between normal compact quantum subgroups $\KK\leq \GG$ and normal open quantum subgroups $\hHH\leq \hGG$ given by the short exact sequences \eqref{ex1} and \eqref{ex2}; in particular
\[
\hHH \approx \widehat{\GG/\KK}\, .
\]
\end{theorem}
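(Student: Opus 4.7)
The plan is to refine the exact-sequence bijection \eqref{ex1}--\eqref{ex2} between normal closed quantum subgroups of $\GG$ and of $\hGG$: ``$\KK$ compact'' will be shown equivalent to ``$\hHH=\widehat{\GG/\KK}$ open'' via the characterisation of open quantum subgroups by group-like projections (Theorem \ref{1-1}).

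\emph{From compact $\KK$ to open $\hHH$.} Set $\HH:=\GG/\KK$. The exact sequences identify $\hHH$ with a closed normal quantum subgroup of $\hGG$ and realise $L^\infty(\hKK)\subseteq L^\infty(\hGG)$ as a Baaj-Vaes subalgebra coinciding with $L^\infty(\hGG/\hHH)$. Since $\KK$ is compact, $\hKK$ is discrete, so $\ell^\infty(\hKK)$ carries a minimal central projection $e_0$ supporting its counit; the counit property yields $\Delta_{\hKK}(e_0)(\I\otimes e_0)=e_0\otimes e_0$, so $e_0$ is a group-like projection in $\hKK$, and, since the embedding $L^\infty(\hKK)\hookrightarrow L^\infty(\hGG)$ intertwines the coproducts, also in $L^\infty(\hGG)$. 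Theorem \ref{1-1} thus produces an open quantum subgroup $\hHH'\leq\hGG$ with $\mathds{1}_{\hHH'}=e_0$. To identify $\hHH'$ with $\hHH$, Theorem \ref{quot} gives
\[
L^\infty(\hGG/\hHH')=\overline{\{(\omega\otimes\id)\Delta_{\hGG}(e_0):\omega\in\Lone(\hGG)\}}^{w}\subseteq L^\infty(\hKK)
\]
(the containment uses $\Delta_{\hGG}|_{L^\infty(\hKK)}=\Delta_{\hKK}$), and the reverse inclusion follows from the same theorem applied internally to $\hKK$ for its (trivially) open subgroup $\{e\}$, whose support in $\ell^\infty(\hKK)$ is precisely $e_0$. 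Hence $L^\infty(\hGG/\hHH')=L^\infty(\hKK)=L^\infty(\hGG/\hHH)$, and the normal subgroup--normal quotient correspondence of \cite{VV} forces $\hHH'=\hHH$.

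\emph{From open $\hHH$ to compact $\KK$.} Conversely, let $\hHH$ be normal open in $\hGG$ and set $e:=\mathds{1}_{\hHH}$, $\hKK:=\hGG/\hHH$. Normality makes $L^\infty(\hGG/\hHH)$ a Baaj-Vaes subalgebra, defining $\hKK$ as a locally compact quantum group, and the task is to show $\hKK$ is discrete. Proposition \ref{delta(P)} yields $\Delta_{\hGG}(e)(\I\otimes e)=e\otimes e$, so $e$ is a fixed point of the canonical action of $\hHH$ on $\hGG$, hence $e\in L^\infty(\hGG/\hHH)=L^\infty(\hKK)$. By Proposition \ref{1_H:counit}, $e$ is a minimal central projection of $L^\infty(\hKK)$ and any normal state $\omega$ on $L^\infty(\hKK)$ with $\omega(e)=1$ satisfies
\[
(\eta\otimes\omega)\Delta_{\hGG}(x)=\eta(x)\qquad (x\in L^\infty(\hKK),\ \eta\in L^\infty(\hKK)_{*}).
\]
Since $\Delta_{\hGG}$ restricts to $\Delta_{\hKK}$ on $L^\infty(\hKK)$, this rephrases as $\eta\ast\omega=\eta$ for every $\eta\in\Lone(\hKK)$, so the convolution algebra $\Lone(\hKK)$ has a right identity. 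This is the standard characterisation of discrete quantum groups, so $\hKK$ is discrete, equivalently $\KK$ is compact, and the exact sequences recover $\KK$ as a normal closed quantum subgroup of $\GG$.

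\emph{Obstacles.} The main technical point I anticipate is the converse direction, specifically the deduction of discreteness of $\hKK$ from the existence of a right identity in $\Lone(\hKK)$, which I will quote from the general theory of locally compact quantum groups. The forward direction is essentially an application of the existing machinery; its only delicate step is verifying that slices of $\Delta_{\hKK}(e_0)$ sweep out all of $\ell^\infty(\hKK)$, which I handle by applying Theorem \ref{quot} internally in the discrete quantum group $\hKK$ to its trivial open subgroup.
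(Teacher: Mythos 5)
Your forward direction (compact $\KK$ $\Rightarrow$ open $\hHH$) has a genuine gap at the point where you invoke Theorem \ref{1-1}. In this paper a group-like projection is, by definition, a projection in $Z(\Linf(\hGG))$ satisfying $\Delta_{\hGG}(P)(\I\otimes P)=P\otimes P$, and the proof of Theorem \ref{1-1} uses centrality in the \emph{ambient} algebra essentially (it forms the von Neumann algebra $P\Linf(\hGG)$ and cuts everything by $P$). You correctly verify the group-like identity for $e_0$ (the coproduct of $\hGG$ does restrict to that of $\hKK$ on the Baaj--Vaes subalgebra), but you only know that $e_0$ is central in $\ell^\infty(\hKK)$, not in $\Linf(\hGG)$, and this is not automatic: it is precisely where normality of $\KK$ must enter, and your argument never uses normality at this stage. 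Concretely, if $K$ is a compact \emph{non-normal} closed subgroup of a classical group $G$, then $e_0=\int_K\lambda_k\,dk\in\vN(G)=\Linf(\hat G)$ is the support of the counit of $\hat K$ and satisfies $\Delta_{\hat G}(e_0)(\I\otimes e_0)=e_0\otimes e_0$, yet it is not central in $\vN(G)$ and corresponds to no open quantum subgroup of $\hat G$ (supports of open quantum subgroups are central by Lemma \ref{charac-func}). Since your argument does not distinguish the normal from the non-normal case, as written it would prove a false statement; proving $e_0\in Z(\Linf(\hGG))$ from the normal-coideal property of $\gamma(\Linf(\hKK))$ is the real work here, not a remark. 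Indeed this is exactly where the paper spends its effort, by a different route: it uses compactness to restrict $\psi^{\GG}$ to $\Linf(\GG/\KK)$ via the conditional expectation $(\id\otimes\psi^{\KK})\circ\alpha$, identifies $\Ltwo(\GG/\KK)\subset\Ltwo(\GG)$ with orthogonal projection $P$, uses normality to obtain $(P\otimes P)\ww^{\GG}=\ww^{\GG}(P\otimes P)$, and then shows directly that $x\mapsto PxP$ is a normal surjective $*$-homomorphism $\Linf(\hGG)\to\Linf(\widehat{\GG/\KK})$ intertwining the coproducts, which is openness by Definition \ref{defm}; centrality of $P$ is deduced only afterwards.

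Your converse direction is sound in outline and genuinely different from the paper's (which simply combines Remark \ref{redinj} with a cited result of Kasprzak--So{\l}tan): passing from Proposition \ref{1_H:counit} to a normal state $\omega$ on $\Linf(\hKK)$ with $(\id\otimes\omega)\Delta_{\hKK}=\id$ is correct, given that normality of $\hHH$ makes $\Linf(\hGG/\hHH)$ a Baaj--Vaes subalgebra on which $\Delta_{\hGG}$ restricts to $\Delta_{\hKK}$. But the final step, quoted as ``the standard characterisation of discrete quantum groups,'' should be justified; it is true and short: slicing the identity $(\id\otimes\Delta_{\hKK})(\ww^{\hKK})=\ww^{\hKK}_{12}\ww^{\hKK}_{13}$ with $\omega$ in the last leg gives $\ww^{\hKK}=\ww^{\hKK}\bigl((\id\otimes\omega)(\ww^{\hKK})\otimes\I\bigr)$, hence $(\id\otimes\omega)(\ww^{\hKK})=\I\in\C_0(\KK)$, so $\C_0(\KK)$ is unital and $\KK$ is compact directly. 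Include such an argument or a precise reference, and supply the missing centrality argument in the forward direction (or switch there to the paper's $\Ltwo$-projection construction), before the proof can stand.
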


\begin{proof}
Suppose $\hat\HH$ is an open quantum subgroup of $\hat\GG$.
By Proposition \ref{redinj} we get $\C_0(\HH)\subset \C_0(\GG)$, hence it follows from \cite[Theorem 5.4]{KaspSol} that $\KK$ is compact.

Conversely, suppose $\KK$ is a compact normal subgroup of $\GG$ and let $\HH = \GG/\KK$.
The formula
\[\mathbb{E} = (\id\otimes \psi^{\KK})\circ\alpha:\Linf(\GG)\rightarrow\Linf(\GG/\KK)\] defines a conditional expectation (see for example \cite{SS} for related considerations). Moreover, if $x\in\Linf(\GG)$ satisfies $\psi^\GG(x^*x)<\infty$ then
\begin{equation}\label{ce}\psi^\GG(\mathbb{E}(x^*x)) = \psi^\GG(x^*x)<\infty.\end{equation}
Indeed, noting that
\[
\begin{split}
(\I_{\Linf(\GG)}\psi^\GG\otimes\id)&\circ\alpha(x^*x) = (\psi^\GG\otimes\id\otimes\id)(\Delta^\GG\otimes\id)\circ\alpha(x^*x)\\
&=(\psi^\GG\otimes\id\otimes\id)(\id\otimes\alpha)\circ\Delta^\GG(x^*x)=\alpha(\psi^\GG(x^*x)\I_{\Linf(\GG)})\\&=
\psi^\GG(x^*x)(\I_{\Linf(\GG)}\otimes\I_{\Linf(\HH)})
\end{split}
\]
 we get
\[(\psi^\GG\otimes\id)(\alpha(x^*x)) = \psi^\GG(x^*x)\I_{\Linf(\HH)}\] and
 \eqref{ce} is proved.
Moreover \eqref{ce} shows that restricting the  Haar measure $\psi^\GG$ to $\Linf(\GG/\KK) $ we get a n.s.f. right invariant weight. In particular we may identify $\Ltwo(\GG/\KK)$ with a subspace of $\Ltwo(\GG)$.

In what follows we shall denote $\Ltwo(\HH) = \Ltwo(\GG/\KK)$.
Let $P:\Ltwo(\GG)\rightarrow \Ltwo(\GG)$ be the orthogonal projection onto $\Ltwo(\HH)$.
The normality of $\KK$ yields $\Delta^\GG(x)\in\Linf(\GG/\KK)\bar\otimes\Linf(\GG/\KK)$, for all $x\in \Linf(\GG/\KK)$. In particular for all $x,y\in\mathcal{N}_\psi$ we have
\[\Delta^\GG(x)(\I\otimes y)\in\mathcal{N}_{\psi\otimes\psi}\] Thus $(P\otimes P)\ww^\GG = \ww^\GG(P\otimes P)$.

The multiplicative unitary $\ww^\HH$ is a unitary operator acting on $\Ltwo(\HH) \otimes\Ltwo(\HH)$ such that
\[\ww^\HH = (P\otimes P)\ww^\GG|_{\Ltwo(\HH)\otimes\Ltwo(\HH)}\] Moreover the map
\begin{equation}\label{ident3}\pi:\Linf(\HH) \ni Px|_{\Ltwo(\HH)} \mapsto x\in\Linf(\GG)
\end{equation} yields the identification of $\hat\HH$ with a closed subgroup of $\hat\GG$. To be more precise,  an element $x\in \Linf(\HH)\subset\Linf(\GG)$ when acts on $\Ltwo(\HH)$ is given by $xP$, which explains \eqref{ident3}.
 In particular we have
\begin{equation}\label{piact}(\id\otimes\pi)(\ww^\HH) = (P\otimes \I) \ww^\GG|_{\Ltwo(\HH)\otimes\Ltwo(\GG)} \end{equation}

Let us define a completely positive normal map  $\rho:\Linf(\hat\GG)\rightarrow\B(\Ltwo(\HH))$
\[\rho(x) = Px|_{\Ltwo(\HH)}\] We have (see \eqref{piact})
\begin{equation}\label{ropi}(\rho\otimes\id)(\ww^\GG) = (\id\otimes\pi)(\ww^\HH)\end{equation}
In what follows we shall show that $\rho$ is a $*$-homomorphism. In order to do this let us fix $\omega,\mu\in\Lone(\GG)$ and $x,y\in\Linf(\hat\GG)$
\[\begin{split}
x&=(\id\otimes\omega)(\ww^\GG)\\
y&=(\id\otimes\mu)(\ww^\GG)
\end{split} \]
We compute
\[
\begin{split}
\rho(xy) &=\rho( (\id\otimes\omega)(\ww^\GG)(\id\otimes\mu)(\ww^\GG)) = \rho( (\id\otimes\omega*\mu)(\ww^\GG))\\
& = (\id\otimes\omega*\mu)((\id\otimes\pi)\ww^\HH)= (\id\otimes\omega\otimes\mu)((\id\otimes\Delta^\GG)(\id\otimes \pi)(\ww^\HH))
\\&=
(\id\otimes\omega\otimes\mu)((\id\otimes \pi\otimes\pi)(\id\otimes\Delta^\HH)(\ww^\HH))
= (\id\otimes\omega\circ\pi)(\ww^\HH)(\id\otimes\mu\circ\pi)(\ww^\HH)\\
& = \rho((\id\otimes\omega )(\ww^\GG))\rho((\id\otimes\mu )(\ww^\GG)) = \rho(x)\rho(y)
\end{split}
\] Since the set of slices of $\ww^\GG$ is dense in $\Linf(\hat\GG)$ we conclude that $\rho$ is a homomorphism. Clearly it also   star preserving.

The injectivity and normality of $\pi$ together with \eqref{ropi}  shows that $\rho(\Linf(\hat\GG)) = \Linf(\hat\HH)$.
Moreover,
\[
\begin{split}
(\Delta^{\hat\HH}\circ\rho\otimes\id)(\ww^\GG) &= (\Delta^{\hat\HH}\otimes\pi)(\ww^\HH) =
(\id\otimes\pi)(\ww^\HH)_{23}(\id\otimes\pi)(\ww^\HH)_{13}\\&=(\rho\otimes\rho\otimes\id)((\Delta^{\hat\GG}\otimes\id)(\ww^{\hat\GG}))
\end{split}\]
Thus we conclude that $\Delta^{\hat\HH}\circ\rho =(\rho\otimes\rho)\circ\Delta^{\hat\GG}$.
\end{proof}
In the next proposition we shall use the terminology and notation introduced in the proof of Theorem \ref{11opcom}.
\begin{proposition}
Let $\KK\subset \GG$ be a normal compact subgroup of $\GG$ and   $P:\Ltwo(\GG)\to\Ltwo(\GG)$   the projection onto $\Ltwo(\GG/\KK)$ and $\HH=\GG/\KK$ . Then $P = \I_{\hat\HH}$.
\end{proposition}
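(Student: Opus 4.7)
The strategy is to identify $P$ with the central-support projection $\I_{\hat\HH}\in Z(\Linf(\hat\GG))$ of the surjection $\rho:\Linf(\hat\GG)\to\Linf(\hat\HH)$, $\rho(y)=Py|_{\Ltwo(\HH)}$, constructed in the proof of Theorem \ref{11opcom}. Write $\gamma:\Linf(\hat\HH)\to\Linf(\hat\GG)$ for the inverse of the induced isomorphism $\I_{\hat\HH}\Linf(\hat\GG)\cong\Linf(\hat\HH)$, so that $\gamma\circ\rho(y)=\I_{\hat\HH}\,y$ for every $y\in\Linf(\hat\GG)$, and $\gamma$ sends the unit of $\Linf(\hat\HH)$ to $\I_{\hat\HH}$.

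One inequality $P\le\I_{\hat\HH}$ is tautological: $\rho(\I_{\hat\HH})=\id_{\Ltwo(\HH)}$ unpacks, through $\rho(y)=Py|_{\Ltwo(\HH)}$, as the operator identity $P\I_{\hat\HH}P=P$ on $\Ltwo(\GG)$, and for two projections in $\B(\Ltwo(\GG))$ this forces $P\le\I_{\hat\HH}$.

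For the reverse inequality I would first show that $P\in\Linf(\hat\GG)$ by recognising $P$ as the orthogonal projection onto the space of $\KK$-invariant vectors in $\Ltwo(\GG)$ for the natural unitary representation of the compact quantum group $\KK$ implementing the right action of $\KK$ on $\GG$. Compactness then yields the slice formula $P=(\id\otimes h^{\KK})(V)\in\M(\C_{0}(\hat\GG))\subseteq\Linf(\hat\GG)$, where $V\in\M(\C_{0}(\hat\GG)\otimes\C_{0}(\KK))$ is the bicharacter of $\KK\subset\GG$ and $h^{\KK}$ is the Haar state of $\KK$. With $P\in\Linf(\hat\GG)$ secured, $\rho(P)=P^{2}|_{\Ltwo(\HH)}=\id_{\Ltwo(\HH)}$, whence $\I_{\hat\HH}P=\gamma(\rho(P))=\gamma(\id_{\Ltwo(\HH)})=\I_{\hat\HH}$. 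Taking adjoints gives $P\I_{\hat\HH}=\I_{\hat\HH}$, which says that the range of $\I_{\hat\HH}$ sits inside the range of $P$, i.e.\ $\I_{\hat\HH}\le P$; together with the previous inequality this yields $P=\I_{\hat\HH}$.

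I expect the main difficulty to lie in justifying the slice identity $P=(\id\otimes h^{\KK})(V)$: one has to confirm that the unitary representation of $\KK$ on $\Ltwo(\GG)$ induced by the right $\KK$-action on $\GG$ has $\Ltwo(\GG/\KK)$ precisely as its space of invariant vectors. This is consistent with the fact, already exploited in the proof of Theorem \ref{11opcom}, that the conditional expectation $\mathbb{E}=(\id\otimes\psi^{\KK})\circ\alpha:\Linf(\GG)\to\Linf(\GG/\KK)$ is implemented on $\Ltwo(\GG)$ by $P$, but it does require some bookkeeping between the various levels (von Neumann, reduced, universal $\C^{*}$) at which the right action of $\KK$ on $\GG$ is available.
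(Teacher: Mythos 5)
Your endgame is correct: granting $P\in\Linf(\hGG)$, the inequality $P\le\I_{\hat\HH}$ from $\rho(\I_{\hat\HH})=\id_{\Ltwo(\HH)}$ (i.e.\ $P\I_{\hat\HH}P=P$) and the inequality $\I_{\hat\HH}\le P$ from $\I_{\hat\HH}P=\gamma(\rho(P))=\I_{\hat\HH}$ are both valid (and in fact, once $P\in\Linf(\hGG)$ is known, $P$ is central there, $\ker\rho=(\I-P)\Linf(\hGG)$, and $P$ is the central support of $\rho$ outright). The genuine gap is exactly the step you flag and postpone: the identity $P=(\id\otimes h^{\KK})(V)$. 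This is not bookkeeping — it is the entire content of the proposition. What must be shown is that the fixed vectors of the representation $V$ of $\KK$ on $\Ltwo(\GG)$ coincide with $\overline{\Lambda_{\psi^{\GG}}\bigl(\mathcal{N}_{\psi^{\GG}}\cap\Linf(\GG/\KK)\bigr)}$, equivalently that $(\id\otimes h^{\KK})(V)\,\Lambda_{\psi^{\GG}}(x)=\Lambda_{\psi^{\GG}}(\mathbb{E}(x))$ for $x\in\mathcal{N}_{\psi^{\GG}}$; this amounts to identifying $V$ with the canonical unitary implementation of $\alpha$ on the GNS space of the invariant weight $\psi^{\GG}$. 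Since $\KK$ need not be open in $\GG$, there is no reduced morphism $\Linf(\GG)\to\Linf(\KK)$, so you cannot compute the action of $V=(\id\otimes\Lambda^{\KK}\circ\pi)(\wW^{\GG})$ on GNS vectors by slicing $\ww^{\GG}$ as in Sections 2--3; and the commutation $V(x\otimes\I)=(x\otimes\I)V$ for $x\in\Linf(\GG/\KK)$ does not by itself place $\Lambda_{\psi^{\GG}}(x)$ in the fixed space, because $\psi^{\GG}$ is a weight and there is no cyclic vector to feed this into. A further warning sign is that your sketch never uses normality of $\KK$, whereas normality is precisely what the paper's argument consumes.

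The paper obtains $P\in\Linf(\hGG)$ much more cheaply from facts already in hand after Theorem \ref{11opcom}: since $\rho(x)=Px|_{\Ltwo(\HH)}$ is a $*$-homomorphism, $x\mapsto PxP$ is multiplicative on $\Linf(\hGG)$, which forces $(\I-P)xP=0$ for all $x\in\Linf(\hGG)$ and hence $P\in\Linf(\hGG)'$; then normality gives $R^{\GG}(\Linf(\GG/\KK))=\Linf(\GG/\KK)$, so $J^{\hat\psi}PJ^{\hat\psi}=P$, and therefore $P\in J^{\hat\psi}\Linf(\hGG)'J^{\hat\psi}=\Linf(\hGG)$, after which the conclusion is immediate. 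You can keep your two-inequality endgame verbatim after substituting this argument for your step establishing $P\in\Linf(\hGG)$; if instead you want the averaging formula $P=(\id\otimes h^{\KK})(V)$, you must actually prove the implementation statement above (essentially the unitary implementation theorem specialised to this action), which is a substantive piece of work rather than a routine verification.
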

\begin{proof}
Recall from the proof of Theorem \ref{11opcom} that the surjection
$\rho:\Linf(\hat\GG)\to\Linf(\hat\HH)$ which identifies $\hat\HH$ as an open quantum subgroup of $\hat\GG$ is defined by
\[\rho(x) = Px|_{\Ltwo(\HH)} .\]
In particular, the map $x\mapsto PxP$ is a *-homomorphism on $\Linf(\hat\GG)$, and hence $P\in \Linf(\hat\GG)'$.
From $ J^{\hat\psi} \Linf(\GG/\KK) J^{\hat\psi} = R^\GG(\Linf(\GG/\KK)) = \Linf(\GG/\KK)$ we conclude that $ J^{\hat\psi} P  J^{\hat\psi} = P$,
which yields $P \in\Linf(\hat\GG)$ and we are done.
\end{proof}

As all closed quantum subgroups of duals of classical subgroups are normal (and arise as duals of quotients of the ambient group, see \cite[Theorem 5.1]{DKSS}), we have the following immediate corollary.

\begin{corollary}
Let $G$ be a locally compact  group. There is a 1-1 correspondence between open quantum subgroups of $\hh{G}$ and normal compact subgroups of $G$:
 a compact normal subgroup of $K\subset G $ yields an open subgroup $\hh{H}$ in $\hh{G}$ such that $H = G/K$.
\end{corollary}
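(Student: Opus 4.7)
The plan is to reduce the statement to Theorem \ref{11opcom} by observing that in the case of a dual of a classical group all the relevant adjectives (``normal'', ``classical'') come for free.

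First I would note that by Theorem \ref{Vaescl} every open quantum subgroup of $\hh{G}$ is in particular a closed quantum subgroup in the sense of Vaes. Combining this with the fact quoted in the paragraph just before the corollary -- that every closed quantum subgroup of $\hh{G}$ (the dual of a classical locally compact group) is automatically normal and arises as the dual of a quotient of $G$ (see \cite[Theorem 5.1]{DKSS}) -- one gets that the open quantum subgroups of $\hh{G}$ are exactly the \emph{normal} open quantum subgroups of $\hh{G}$.

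Next I would invoke Theorem \ref{11opcom} applied to the locally compact quantum group $G$: it provides a 1-1 correspondence between normal open quantum subgroups $\hh{H}\leq\hh{G}$ and normal compact quantum subgroups $K\leq G$, realised by the short exact sequences \eqref{ex1}, \eqref{ex2} and giving the identification $\hh{H}\approx\widehat{G/K}$. Since $G$ is a classical locally compact group, any closed quantum subgroup of $G$ is a genuine closed subgroup of $G$ (again by \cite[Theorem 5.1]{DKSS}), so ``normal compact quantum subgroup of $G$'' is synonymous with ``normal compact subgroup of $G$''. Concatenating the two correspondences produces the bijection claimed in the corollary, with $H=G/K$.

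There is no real obstacle here: the corollary is essentially a specialisation of Theorem \ref{11opcom} to the self-dual situation in which automatic normality of closed quantum subgroups of $\hh{G}$ removes the normality hypothesis on the open side, and classicality of $G$ removes any ``quantum'' ambiguity on the compact side. The only thing one has to be careful about is to cite correctly the results from \cite{DKSS} which ensure that both forms of automatic classicality/normality are at our disposal.
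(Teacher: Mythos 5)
Your argument is correct and is essentially the paper's own proof: the corollary is obtained by combining Theorem \ref{Vaescl} with the automatic normality of closed quantum subgroups of duals of classical groups (so that every open quantum subgroup of $\hh{G}$ is normal) and then specialising Theorem \ref{11opcom}. The only minor slip is the citation for the classicality of (compact) quantum subgroups of the classical group $G$ itself, which comes from the results of \cite{DKSS} on quantum subgroups of classical groups rather than from the statement about duals used for the other side.
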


We end the section with another characterization of open quantum subgroups which essentially follows from the proof of Theorem \ref{11opcom}.

\begin{theorem} \label{thm:weightsopen}
Let $\HH$ be a closed quantum subgroup of $\GG$ identified by an injective morphism  $\gamma:\Linf(\hat\HH)\to  \Linf(\hat\GG)$.
If $\psi^{\hat\GG}\circ\gamma$ defines  an n.s.f. weight on $\Linf(\hat\HH)$, then $\HH$ is an open quantum subgroup of $\GG$.
\end{theorem}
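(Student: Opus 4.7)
My approach will mirror, in the dual direction, the proof of Theorem~\ref{11opcom}. First, coproduct intertwining of $\gamma$ together with the right-invariance of $\psi^{\hGG}$ will imply that $\psi^{\hGG}\circ\gamma$ is right-invariant on $\Linf(\hHH)$: applying $\gamma$ to $(\id\otimes(\psi^{\hGG}\circ\gamma))\Delta^{\hHH}(x)$ and using $(\gamma\otimes\gamma)\Delta^{\hHH}=\Delta^{\hGG}\circ\gamma$ yields $(\id\otimes\psi^{\hGG})\Delta^{\hGG}(\gamma(x))=\psi^{\hGG}(\gamma(x))\I$, so by injectivity and unitality of $\gamma$,
\[
(\id\otimes\psi^{\hGG}\circ\gamma)\Delta^{\hHH}(x) \;=\; (\psi^{\hGG}\circ\gamma)(x)\,\I_{\Linf(\hHH)}.
\]
Uniqueness of the right Haar weight on $\hHH$ then provides $c>0$ with $\psi^{\hGG}\circ\gamma=c\,\psi^{\hHH}$, so the prescription $\Lambda_{\psi^{\hHH}}(x)\mapsto c^{-1/2}\Lambda_{\psi^{\hGG}}(\gamma(x))$ extends to an isometry $u:\Ltwo(\hHH)\to\Ltwo(\hGG)$. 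After the canonical identifications $\Ltwo(\hGG)=\Ltwo(\GG)$ and $\Ltwo(\hHH)=\Ltwo(\HH)$, I set $P:=uu^*\in\B(\Ltwo(\GG))$.

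The crucial commutation relation to prove next is $(P\otimes P)W^{\hGG}=W^{\hGG}(P\otimes P)$. One direction is immediate from the GNS formula $W^{\hGG}(\Lambda_{\psi^{\hGG}}(a)\otimes\Lambda_{\psi^{\hGG}}(b))=(\Lambda_{\psi^{\hGG}}\otimes\Lambda_{\psi^{\hGG}})(\Delta^{\hGG}(a)(\I\otimes b))$: for $a=\gamma(x)$, $b=\gamma(y)$ with $x,y\in\mathcal{N}_{\psi^{\hHH}}$, coproduct intertwining places the right-hand side in $u\Ltwo(\hHH)\otimes u\Ltwo(\hHH)=(P\otimes P)\Ltwo(\hGG)^{\otimes 2}$. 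The reverse inclusion will come from the Podle\'s density condition on $\hHH$ (vectors of the form $(\Lambda_{\psi^{\hHH}}\otimes\Lambda_{\psi^{\hHH}})(\Delta^{\hHH}(x)(\I\otimes y))$ span $\Ltwo(\hHH)\otimes\Ltwo(\hHH)$ densely), which upgrades $W^{\hGG}$-invariance to surjectivity on the subspace; unitarity of $W^{\hGG}$ then gives the full commutation. Via $W^{\hGG}=\sigma((W^{\GG})^*)$, we then also obtain $(P\otimes P)W^{\GG}=W^{\GG}(P\otimes P)$.

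The final step will construct the desired surjection by setting $\pi(x):=u^*xu$ and running the slicing argument at the end of the proof of Theorem~\ref{11opcom} with $\GG$ and $\hGG$ swapped: combining $\Linf(\GG)=\{(\omega\otimes\id)(W^{\GG}):\omega\in\B(\Ltwo(\GG))_*\}''$ with the commutation established above shows that $\pi$ is a normal $*$-homomorphism, while the identity $(\id\otimes\pi)(W^{\GG})=W^{\HH}$ (valid under the identification $\Ltwo(\HH)\subset\Ltwo(\GG)$) confirms both that $\pi$ surjects onto $\Linf(\HH)$ and that $(\pi\otimes\pi)\Delta^{\GG}=\Delta^{\HH}\circ\pi$, thereby verifying Definition~\ref{defm}. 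The principal technical obstacle is the reverse inclusion in $(P\otimes P)W^{\hGG}=W^{\hGG}(P\otimes P)$: although the forward inclusion falls immediately out of coproduct intertwining on GNS vectors, upgrading it to equality on $(P\otimes P)\Ltwo(\hGG)^{\otimes 2}$ requires invoking Podle\'s density for $\hHH$ (equivalently, the Baaj--Vaes invariance of $\gamma(\Linf(\hHH))$ under $\hh R^{\GG}$ and $\hh\tau^{\GG}_t$).
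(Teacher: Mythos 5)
Your proposal is correct and follows essentially the same route as the paper: identify $\Ltwo(\hHH)$ inside $\Ltwo(\hGG)$ via the weight compatibility $\psi^{\hGG}\circ\gamma=\psi^{\hHH}$ (up to a scalar), let $P$ be the resulting projection, check the compression of the multiplicative unitary is that of the subgroup, and then run the slicing/convolution argument of Theorem \ref{11opcom} to show that the compression map $\Linf(\GG)\to\Linf(\HH)$ is a normal surjective $*$-homomorphism intertwining the coproducts, which is exactly Definition \ref{defm}. The only step you assert without derivation, the half-compressed identity $(\id\otimes\pi)(\ww^{\GG})=(\gamma\otimes\id)(\ww^{\HH})$, is treated with the same brevity in the paper and follows from the same GNS computation you use for the commutation of $P\otimes P$ with $\ww^{\hGG}$.
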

\begin{proof}
The assumption on $\psi^{\hat\GG}$ shows that $\psi^{\hat\HH} = \psi^{\hat\GG}\circ\gamma$ which
allows us to identify $\Ltwo(\hat\HH)$ with a subspace of $\Ltwo(\hat\GG)$. We denote by $P$ the projection onto $\Ltwo(\hat\HH)\subset \Ltwo(\hat\GG)$. Then we have
\[\ww^{\hat\HH} = (P\otimes P)\ww^{\hat\GG}|_{\Ltwo(\hat\HH)\otimes \Ltwo(\hat\HH)}\]
Using this description of $\Ltwo(\hat\HH)$ we conclude that $\gamma(Px|_{\Linf(\hat\HH)}) = x$ and
\[(\id\otimes\gamma)(\ww^{\hat\HH}) = (P\otimes \I)\ww^{\hat\GG}|_{\Ltwo(\hat\HH)\otimes \Ltwo(\hat\GG)}\]
Define the normal c.p. map
$\rho:\Linf(\GG)\to\B(\Ltwo(\hat\HH))$ by
\[\rho(x) = Px|_{\Ltwo(\hat\HH)}\]
Then
\[(\id\otimes\gamma)(\ww^{\hat\HH}) = (\rho\otimes\id)\ww^{\hat\GG}\] Following the techniques of the proof of Theorem \ref{11opcom} we may conclude that $\rho$ is a normal $*$-homomorphism such that $\rho(\Linf(\GG)) = \Linf(\HH)$ and
\[\Delta^\HH\circ\rho = (\rho\otimes\rho)\circ\Delta^\GG\]
Since $\rho$ is a $*$-homomorphism it follows $P\in \Linf(\GG)'$. Moreover, since
\[R^{\hat\GG}(\gamma(\Linf(\hat\HH))) = J^\psi\gamma(\Linf(\hat\HH)J^\psi= \gamma(\Linf(\hat\HH))\]
we get $J^\psi P J^\psi = P$. Since $P\in\Linf(\GG)'$ we conclude that $P\in\Linf(\GG)$. In particular $P = \I_\HH$.
\end{proof}

\bibliography{open1}{}
\bibliographystyle{plain}

\end{document}